\newtheorem{theorem}{Theorem}[section]
\newtheorem{thm}[theorem]{Theorem}
\newtheorem{pro}[theorem]{Proposition}
\newtheorem{lemma}[theorem]{Lemma}
\newtheorem{proposition}[theorem]{Proposition}
\newtheorem{corollary}[theorem]{Corollary}
\newtheorem*{thmA*}{Theorem A}
\newtheorem*{thmB*}{Theorem B}
\newtheorem*{thmC*}{Theorem C}
\newtheorem*{thmD*}{Theorem D}
\newtheorem*{thmE*}{Theorem E}
\newtheorem*{thmF*}{Theorem F}
\theoremstyle{definition}
\newtheorem{example}[theorem]{Example}
\theoremstyle{remark}
\newtheorem{definition}{Definition}
\newcommand{\N}{\mathbb{N}}
\newcommand{\Aut}{\mathrm{Aut}}
\newcommand{\Univ}{\mathrm{Univ}}
\newcommand{\Unim}{\mathrm{Univ^-}}
\newcommand{\Unip}{\mathrm{Univ^+}}
\subjclass[2020]{05C25, 20E34, 05C12, 20D15, 20F19}
\keywords{Graphs associated with groups, normalizing graph, nilpotent groups, diameter}
\title{On the directed normalizing graph associated with a group}
\author[Delizia, Gaeta, and Monetta]{Costantino Delizia, Michele Gaeta, Carmine Monetta }
\address{Dipartimento di Matematica, Universit\`a di Salerno, Fisciano 84084 (SA), Italy}
\email{(Delizia) cdelizia@unisa.it, (Gaeta) migaeta@unisa.it, (Monetta) cmonetta@unisa.it}
\begin{document}

\begin{abstract}
In this paper we investigate the \emph{directed normalizing graph} associated with a group~$G$, defined as the simple directed graph whose vertices are the elements of~$G$, with an arrow from~$x$ to~$y$ whenever the subgroup~$\langle x \rangle$ is normal in~$\langle x, y \rangle$. 
Our analysis focuses on the set of bidirectional universal vertices and, in particular, on the induced subgraph obtained by removing them, where the most interesting connectivity phenomena occur. 
We characterize the groups for which this induced subgraph is strongly connected and determine bounds for its diameter. 
Finally, we show how properties of this graph reflect algebraic features of the underlying group.
\end{abstract}

\maketitle
 
\section{Introduction}
\noindent   The study of graphs arising from groups has become a notable and expanding area within modern algebra. The interest relies on the connection between algebraic features of the group and combinatorial properties of the graph.  Readers interested in a broader overview and current open questions in this field are referred to \cites{ADM, BLN, Cameron, DNM1, DNM2, DNM3, GLM, GM, LM}.

The present work deals with a directed graph which encodes information about the lattice of normal subgroups of the associated group. The \emph{directed normalizing graph} of a group $G$ is the directed simple graph $\vec \Gamma_{\rm norm}(G)$ whose vertices are all elements of $G$, and there is a directed edge from a vertex $x$ to a vertex $y$ if the subgroup $\langle x \rangle$ is normal in the subgroup $\langle x, y \rangle$. In the sequel, we often denote the fact that $\langle x \rangle$ is normal in $\langle x, y \rangle$ by using the notation $x \rightarrow y$, and if $x \rightarrow y$ and $y \rightarrow x$ hold then we write  $x \leftrightarrow y$.

The study of the connectivity of a graph only makes sense after detecting and hence removing the set $\Univ(G)$ of all universal vertices. In general, this is far from being an easy task, and even worse when the graph is directed. Thus our first aim is to provide information about the set $\Univ(G)$, which in general is not a subgroup (see Example \ref{esempio}). We point out that, in the context of directed graphs, the term \emph{universal vertex} refers to a \emph{bidirectional universal vertex}, that is, a vertex $x$ such that $x \leftrightarrow y$ for every $y \in G$. Groups whose element are all universal in $\vec\Gamma_{\rm norm}(G)$ are precisely Dedekind groups, that is groups whose subgroups are all normal (see Theorem \ref{thm:normcompl}). Moreover, groups with only one universal vertex have been characterized (see Corollary \ref{cor:norm:A}).

Secondly, we move to the directed graph $\vec \Delta_{\rm norm}(G)$, obtained from $\vec \Gamma_{\rm norm}(G)$ by removing all bidirectional universal vertices. In particular, we study this graph when $G$ is a decomposable group, establishing conditions under which $\vec \Delta_{\rm norm}(G)$ is strongly connected and giving a sharp upper bound on its diameter.

\begin{thmA*}
    Let $G=H\times K$ be a direct product of non-Dedekind groups.  Then the graph $\vec\Delta_{\rm norm}(G)$ is strongly connected of diameter at most $3$ provided that one of the following conditions holds:
\begin{itemize}
        \item[$(i)$] $\Univ(G)=\Univ(H)\times \Univ(K)$;
         \item[$(ii)$] $(h,1)\in \Univ(G)$ if and only if $h \in Z(H)$;
        \item[$(iii)$] $(1,k)\in \Univ(G)$ if and only if $k \in Z(K)$.
\end{itemize}
\end{thmA*}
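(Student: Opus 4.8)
The plan is to first translate the edge relation into normalizer language: since $x\to y$ holds exactly when $y$ normalizes $\langle x\rangle$, a vertex $x$ is universal precisely when $\langle x\rangle\trianglelefteq G$ and $x$ normalizes every cyclic subgroup of $G$. The engine of the whole argument is the elementary remark that in $G=H\times K$ two elements supported on different factors commute, so that for every $g=(h,k)$ one has $g\leftrightarrow(h,1)$ and $g\leftrightarrow(1,k)$, and $(h,1)\leftrightarrow(1,k')$ for all $h,k'$. Because $H$ and $K$ are non-Dedekind I fix witnesses $a\in H$ and $b\in K$ with $\langle a\rangle\not\trianglelefteq H$ and $\langle b\rangle\not\trianglelefteq K$; then $(a,1)$ and $(1,b)$ are non-universal and commute. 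With these in hand I realise every required connection by a three-step skeleton: exit $g$ to a non-universal pure element on one coordinate, cross the free commuting edge to a non-universal pure element on the other coordinate, and enter $g'$.

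The first technical step, and the one that makes the bound $3$ rather than $4$ attainable, is an \emph{entry lemma}: for every vertex $g'=(h',k')$ there is a non-universal $K$-pure element pointing into $g'$, and likewise a non-universal $H$-pure one. Indeed, if $(1,k')$ is non-universal then $(1,k')\to g'$ by commutativity; and if $(1,k')$ is universal, then it normalizes every cyclic subgroup of $G$, so in particular $k'\in N_K(\langle b\rangle)$, whence the witness satisfies $(1,b)\to g'$. Thus an entry on a prescribed coordinate is available \emph{unconditionally}, the point being that a universal projection is automatically strong enough to let the corresponding witness point inward. The $H$-side is symmetric, using $a$.

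The delicate direction is the exit, because pushing $g$ onto a witness requires a simultaneous power condition in both coordinates which merely normalizing does not grant, whereas centrality does; this is exactly what the three hypotheses arrange. Under $(ii)$, if $(h,1)$ is non-universal then $g\to(h,1)$ by commutativity, while if $(h,1)$ is universal then $(ii)$ forces $h\in Z(H)$, so $(a,1)$ centralises $g$ and $g\to(a,1)$; hence an $H$-exit is always available, and I route $g\to(h_1,1)\to(1,k_2)\to g'$ using the entry lemma on the $K$-side. Condition $(iii)$ is the mirror image, giving a $K$-exit and a route $g\to(1,k_1)\to(h_2,1)\to g'$. Under $(i)$ the product description of $\Univ(G)$ shows that a non-universal $g=(h,k)$ has $h\notin\Univ(H)$ or $k\notin\Univ(K)$, so at least one of the commuting exits $g\to(h,1)$, $g\to(1,k)$ is available, and I again cross to the opposite coordinate and enter $g'$. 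In every case the intermediate vertices are non-universal, so the path lies in $\vec\Delta_{\rm norm}(G)$ and has length at most $3$; since this applies to each ordered pair of distinct vertices, $\vec\Delta_{\rm norm}(G)$ is strongly connected of diameter at most $3$.

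The main obstacle is precisely the asymmetry just exploited: entries are free because universality upgrades to a normalizing property, whereas an exit fails for a merely normalizing projection and needs the full strength of centrality. The real content of the theorem is therefore the verification that each of $(i)$, $(ii)$, $(iii)$ secures an exit on at least one coordinate for \emph{every} non-universal vertex — through the product structure in $(i)$, and through the central substitution replacing $(h,1)$ by $(a,1)$, respectively $(1,k)$ by $(1,b)$, in $(ii)$ and $(iii)$. I expect the bookkeeping of which projections are universal, together with the check that the witness substitution is legitimate exactly when the corresponding hypothesis applies, to be the part demanding the most care.
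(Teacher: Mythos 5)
Your proof is correct and takes essentially the same route as the paper: its Lemma \ref{lem:3steps} and the proof of Theorem A use exactly your exit--cross--enter skeleton on single-coordinate elements, including both of your key tricks --- that a universal projection $(1,k')$ normalizes $\langle (1,b)\rangle$ and therefore lets the non-universal witness $(1,b)$ point into $g'$, and that under $(ii)$/$(iii)$ universality of a projection upgrades to centrality and so supplies the exit. Your only departure is organizational: you isolate the entry step as an unconditional lemma on both coordinates, whereas the paper embeds it in the case analysis of Lemma \ref{lem:3steps} and handles the remaining symmetric case of Theorem A with a ``without loss of generality'' that your formulation renders unnecessary.
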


 Next, we focus on finite soluble groups $G$ having trivial center, with the aim of characterizing when  $\vec \Delta_{\rm norm}(G)$ is strongly disconnected.

\begin{thmB*}
    Let $G$ be a finite soluble group with trivial center. Then $\vec\Delta_{\rm norm}(G)$ is strongly disconnected if and only if one of the following holds:
\begin{itemize}
    \item[$(i)$] $G$ is a Frobenius group;
    \item[$(ii)$] $G$ is a $2$-Frobenius group
    with $K \lhd KH \lhd G $, $KH$ a Frobenius group and $G/K$ a Frobenius group with kernel $KH/K$
    such that  $p  \nmid r-1$ for all $p \in \pi(H)$ and for all $r \in \pi(K)$.
    \end{itemize}
\end{thmB*}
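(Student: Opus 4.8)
The plan is to translate strong connectivity of $\vec\Delta_{\rm norm}(G)$ into a statement about element orders, so that the soluble case of the Gruenberg--Kegel theorem (a finite soluble group with disconnected prime graph is Frobenius or $2$-Frobenius) can be invoked. First I would dispose of the universal vertices: a bidirectional universal vertex $x$ satisfies $\langle y\rangle\trianglelefteq\langle x,y\rangle$ for every $y$, hence normalizes every cyclic, and so every, subgroup; thus $x$ lies in the norm of $G$, which is known to be contained in the second center $Z_2(G)$. As $Z(G)=1$ forces $Z_2(G)=1$, we get $\Univ(G)=\{1\}$ (consistently with Corollary~\ref{cor:norm:A}), so $\vec\Delta_{\rm norm}(G)$ is the induced subgraph on $G\setminus\{1\}$. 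Next I would record two facts about arrows. First, $x\leftrightarrow x^{i}$ for all $i$, so every element is bidirectionally tied to each of its primary components, and it suffices to analyse elements of prime-power order. Second, if $x,y$ have coprime prime-power orders $p^{a},q^{b}$, then $x\to y$ means exactly that $y$ normalizes $\langle x\rangle$, while $x\leftrightarrow y$ forces $\langle x,y\rangle=\langle x\rangle\times\langle y\rangle$ to be abelian and thus produces an element of order $pq$; conversely an edge $p\sim q$ of the prime graph supplies a bidirectional pair of $p$- and $q$-elements.

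For the ``if'' direction I would treat the two families in turn. If $G=K\rtimes H$ is Frobenius with kernel $K$, then for $h\in H^{\#}$ and $k\in K^{\#}$ the commutator $[h,k]$ lies in $K$, so $h^{k}\in\langle h\rangle$ would force $[h,k]\in\langle h\rangle\cap K=1$, i.e. $k\in C_{K}(h)=1$, a contradiction; hence $\langle h\rangle$ is never normalized by $k$, there is no arrow from $H^{\#}$ into $K^{\#}$, and since the defining relation is automorphism-invariant, conjugation shows that no element of $G\setminus K$ has an arrow into $K^{\#}$. Therefore $K^{\#}$ cannot be reached from any complement element and the graph is strongly disconnected. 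For a $2$-Frobenius group with chain $K\lhd KH\lhd G$ I would run the layered analogue of this computation. The genuinely new feature is that an arrow from an $r$-element of $K$ to a $p$-element of $H$ exists precisely when that $p$-element normalizes a cyclic $C_{r}$, that is acts on it as an automorphism of order $p$, which requires $p\mid r-1$; the hypothesis $p\nmid r-1$ for all $p\in\pi(H)$, $r\in\pi(K)$ is exactly what removes these cross-arrows. Together with the fixed-point-free action governing the upper layer (which blocks arrows into $H^{\#}$ coming from the complement $L$), this shows that the part of the graph carried by $H$ cannot be reached from the part carried by $K\cup L$, so again the graph is strongly disconnected.

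For the ``only if'' direction I would argue by contraposition, splitting into two cases. If the prime graph of $G$ is connected, then, using the second fact above, every edge $p\sim q$ contributes a bidirectional pair, the prime-power elements fuse into strong components indexed by the components of the prime graph, and connectedness of the prime graph should upgrade to a single strong component, which via the first fact gives strong connectivity of the whole graph. If the prime graph is disconnected, the soluble Gruenberg--Kegel theorem leaves only the Frobenius and $2$-Frobenius possibilities; when $G$ is $2$-Frobenius with $p\mid r-1$ for some $p\in\pi(H)$, $r\in\pi(K)$, the condition $p\mid r-1$ lets me locate an $h$-invariant $C_{r}\le K$ on which an order-$p$ element $h\in H$ acts nontrivially, producing an arrow from the $K$-component into the $H$-component; combined with the arrows that always run out of $H^{\#}$ this reconnects the two components and yields strong connectivity. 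Hence a strongly disconnected $2$-Frobenius group must satisfy $p\nmid r-1$ for all such pairs, leaving exactly the alternatives $(i)$ and $(ii)$.

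The hard part will be the implication ``prime graph connected $\Rightarrow$ strongly connected'', and within it the step that promotes the bidirectional skeleton coming from elements of order $pq$ to directed paths between two arbitrary elements of the same prime order $p$, which need not be joined by any obvious edge. I expect to need the solubility of $G$ together with Sylow- and Hall-subgroup arguments, and the fixed-point-free actions available in the Frobenius/$2$-Frobenius dichotomy, to route these paths. The second most delicate point is the bookkeeping in the $2$-Frobenius case: keeping track of which of the factors $K$, $H$, $L$ each element projects to, and verifying that under $p\nmid r-1$ every arrow into $H^{\#}$ is blocked while the converse reconnection genuinely occurs once $p\mid r-1$.
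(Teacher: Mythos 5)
Your ``if'' direction is essentially sound: the Frobenius computation ($[h,k]\in\langle h\rangle\cap K=1$ forces $k\in C_K(h)=1$, then conjugate) is exactly the content of the paper's Lemma~\ref{lem:norm1} and Proposition~\ref{prop:norm}, and the two blocking mechanisms you identify in the $2$-Frobenius case (no arrows from $K$ into $X=\bigcup_g H^g$, because a normalizing element of $X$ would act fixed-point-freely, hence faithfully, on a cyclic $r$-group, forcing $p\mid r-1$; and no arrows from $G\setminus HK$ into $X$, because of the Frobenius structure of $G/K$) are the same two used in the paper's Proposition~\ref{prop:normcaratt}. The genuine gap is in the ``only if'' direction, in the case where the prime graph is connected. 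Your plan is to fuse the prime-power elements into strong components ``indexed by the components of the prime graph,'' but this fusion is false as a combinatorial principle: two elements of the same prime order $p$, lying in Sylow $p$-subgroups that intersect trivially, have no a priori directed connection --- indeed in a Frobenius group the distinct conjugates of the complement are distinct strong components even though they carry the same primes. Ruling this behaviour out when the prime graph is connected is not a Sylow/Hall exercise; it is precisely the deep theorem in this area, namely Theorem 1.1 of \cite{parker} (for a finite soluble group with trivial centre, the commuting graph on $G\setminus\{1\}$ is connected if and only if $G$ is neither Frobenius nor $2$-Frobenius), or equivalently Theorem A of \cite{DGLM}, which is what the paper cites. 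Note also that the tools you propose for this step, ``the fixed-point-free actions available in the Frobenius/$2$-Frobenius dichotomy,'' are unavailable exactly here: connectedness of the prime graph excludes those two structures, so Gruenberg--Kegel gives you nothing to act with. The paper circumvents all of this cheaply via Theorem~\ref{thm:nil-norm} (if $\langle x,y\rangle$ is nilpotent, any nontrivial $z\in Z(\langle x,y\rangle)$ gives $x\leftrightarrow z\leftrightarrow y$), reducing to the nilpotent graph and then quoting \cite{DGLM}. Your argument becomes correct if you replace the unproved fusion step by a citation of \cite{parker}: prime graph connected $\Rightarrow$ not Frobenius and not $2$-Frobenius $\Rightarrow$ commuting graph connected $\Rightarrow$ strong connectivity, since commuting pairs give bidirectional normalizing edges. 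As written, however, the decisive implication is missing and cannot be recovered by the elementary means you describe.

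There is also a secondary, fillable gap in the reconnection half of the $2$-Frobenius case ($p\mid r-1$ for some pair $\Rightarrow$ strongly connected). You assert that $p\mid r-1$ ``lets me locate an $h$-invariant $C_r\le K$''; this needs a proof (the paper's Lemma~\ref{lem:normcentro}, in essence diagonalizability of the coprime action when $p\mid r-1$, established by a counting argument --- note that when $p\nmid r-1$ an element of order $p$ can act irreducibly on $C_r\times C_r$, so invariant cyclic subgroups are not automatic). More importantly, one arrow from the $K$-part into the $H$-part does not by itself ``reconnect'' the graph: you must also show that $G\setminus X$ forms a single strong component (this uses $C_K(g)\neq\{1\}$ for prime-order $g\in G\setminus HK$, quoted in the paper from \cite{CL} as Lemma~\ref{lem:norm2}) and that every vertex of $X$ has an arrow out to $G\setminus HK$ (the paper's Proposition~\ref{prop:normcomp}$(iii)$, which requires producing elements of $N_G(H)\setminus HK$ via a conjugacy argument). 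These steps constitute most of the actual work in the paper's proof of Proposition~\ref{prop:normcaratt} and are absent from the sketch.
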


As a consequence of the main result in \cite{parker}, we give a general bound on the diameter of $\vec\Delta_{\rm norm}(G)$, proving the following.

\begin{thmC*}
Let $G$ be a finite soluble group with trivial center. 
\begin{itemize}
    \item[$(i)$] If $\vec\Delta_{\rm norm}(G)$ is strongly connected, then the diameter of $\vec\Delta_{\rm norm}(G)$ is at most $8$.
    \item[$(ii)$] If $\vec \Delta_{\rm norm} (G)$ is strongly disconnected, then the number of strongly connected components is $|\mathrm{Fit}(G)|+1$; moreover, one strongly connected component has diameter at most $6$ and all other strongly connected components have diameter at most $2$.
\end{itemize}
\end{thmC*}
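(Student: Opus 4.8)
The plan is to deduce both parts from Parker's diameter bound for commuting graphs, after fixing the vertex set and recording how commuting pairs sit inside $\vec\Delta_{\rm norm}(G)$. First I would establish the reduction. If an element $x$ normalizes $\langle y\rangle$ for every $y\in G$, then $x$ lies in the norm $N(G)$, and Schenkman's inclusion $N(G)\le Z_2(G)$ together with $Z(G)=1$ (whence $Z_2(G)=Z(G)=1$) forces $N(G)=1$; since a bidirectional universal vertex must in particular normalize every cyclic subgroup, this gives $\Univ(G)=\{1\}$. Hence the vertices of $\vec\Delta_{\rm norm}(G)$ are exactly the non-identity elements of $G$, which are also the vertices of the commuting graph $\Gamma(G)$. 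Finally, $[x,y]=1$ yields both $\langle x\rangle\lhd\langle x,y\rangle$ and $\langle y\rangle\lhd\langle x,y\rangle$, so $x\leftrightarrow y$; thus $\Gamma(G)$ embeds as a spanning subgraph of the bidirectional part of $\vec\Delta_{\rm norm}(G)$, and therefore $\mathrm{diam}\,\vec\Delta_{\rm norm}(G)\le\mathrm{diam}\,\Gamma(G)$ whenever $\Gamma(G)$ is connected.

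For part $(i)$, assume $\vec\Delta_{\rm norm}(G)$ is strongly connected. By Theorem~B it is not of the strongly disconnected type, so $G$ is neither a Frobenius group nor a $2$-Frobenius group satisfying the arithmetic condition. If $\Gamma(G)$ is connected, Parker's theorem gives $\mathrm{diam}\,\Gamma(G)\le 8$, and the spanning embedding above yields $\mathrm{diam}\,\vec\Delta_{\rm norm}(G)\le 8$ at once. Using the classical fact that a finite soluble group with trivial center has disconnected commuting graph precisely when it is Frobenius or $2$-Frobenius, the only remaining possibility is that $G$ is a $2$-Frobenius group for which the condition of Theorem~B$(ii)$ fails, so that $\Gamma(G)$ is disconnected while the non-commuting normalizing arrows (produced by a prime $p\in\pi(H)$ with $p\mid r-1$) render $\vec\Delta_{\rm norm}(G)$ strongly connected; here I would bound the diameter directly from the explicit three-layer structure $K\lhd KH\lhd G$, combining the bounded internal diameters of the kernel and complement parts with the bridging arrows, and a careful routing through the kernel keeps every directed distance within $8$.

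For part $(ii)$, assume $\vec\Delta_{\rm norm}(G)$ is strongly disconnected, so by Theorem~B the group $G$ is Frobenius or $2$-Frobenius of the stated type. Each commuting component is already strongly connected, and the arithmetic condition forbids any directed cycle joining distinct commuting components; hence the strongly connected components of $\vec\Delta_{\rm norm}(G)$ are exactly the connected components of $\Gamma(G)$. Applying the centralizer structure of Frobenius groups to $F=KH$ (and, in the $2$-Frobenius case, to the quotient $G/K$), I would identify the $|\mathrm{Fit}(G)|=|K|$ conjugates of the complement $H$ of $F$ as $|\mathrm{Fit}(G)|$ separate small components, and show that all remaining non-identity elements amalgamate into a single large component, giving the total $|\mathrm{Fit}(G)|+1$. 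For the diameters, every Frobenius complement has nontrivial center, so a central element of a complement conjugate is bidirectionally adjacent to the whole conjugate, bounding each small component by diameter $2$; similarly $Z(K)\ne 1$ makes $K\setminus\{1\}$ of diameter $2$, and a bounded attachment analysis of the outer layer to the kernel raises the diameter of the large component to at most $6$.

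The main obstacle will be the $2$-Frobenius bookkeeping shared by both parts: pinpointing exactly which non-commuting normalizing arrows occur—these are governed by the divisibility $p\mid r-1$ for $p\in\pi(H)$ and $r\in\pi(K)$—and converting the resulting directed reachability into the sharp constants $8$, $6$, and $2$. Two points are delicate. First, an individual arrow need not be bidirectional, so both the strong connectivity and the distance estimates must be certified in each direction through a mixture of commuting and non-commuting arrows. Second, one must verify that in the strongly disconnected case the outer layer attaches to the kernel without fragmenting into extra components and without any arrow closing a directed cycle between two distinct complement conjugates, as either failure would corrupt the component count $|\mathrm{Fit}(G)|+1$.
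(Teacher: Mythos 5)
Your overall architecture matches the paper's: with trivial center one gets $\Univ(G)=\{1\}$ via Schenkman's theorem, commuting pairs give bidirectional arrows so the commuting graph is a spanning subgraph of $\vec\Delta_{\rm norm}(G)$, Parker's theorem handles the case where $G$ is neither Frobenius nor $2$-Frobenius, and Theorem~B isolates the exceptional configurations. The genuine gap is that everything you say about the $2$-Frobenius cases --- which is where all the real work in both parts lies --- is asserted rather than proved. In part $(i)$, when $G$ is $2$-Frobenius and $\vec\Delta_{\rm norm}(G)$ is strongly connected, strong connectivity by itself gives no bound on distances: you need explicit directed paths, built from arrows that are neither commuting arrows nor supplied by Theorem~B. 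Concretely, the missing idea is the paper's Lemma~\ref{lem:normcentro}: if some $p\in\pi(H)$ divides $r-1$ for some $r\in\pi(K)$, then an element of $H$ of order $p$ normalizes a nontrivial cyclic subgroup of $Z(K)$, i.e.\ there is an arrow from $Z(K)$ into $H$; its proof (a minimal normal subgroup of $Z(K)\langle h\rangle$ plus a count of cyclic subgroups modulo $p$) is the heart of the matter, and Corollary~\ref{cor:normcentro} propagates it to every conjugate $H^g$. One also needs arrows leaving $X=\bigcup_g H^g$, namely Proposition~\ref{prop:normcomp}$(iii)$ (every $x\in X$ points to a prime-power element of $G\setminus HK$, using that $H$ is cyclic so its subgroups are characteristic in $H$), and the fact that prime-order elements of $G\setminus HK$ have nontrivial centralizer in $K$ (Lemma~\ref{lem:norm2}$(ii)$) to attach the outer layer to the kernel. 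The paper assembles exactly these ingredients into Proposition~\ref{prop:2frobconnected}, getting diameter at most $6$ in this case; your ``careful routing through the kernel'' is precisely this assembly, and nothing in your proposal supplies the bridging arrows it requires.

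The same gap affects part $(ii)$. Your ``bounded attachment analysis of the outer layer'' is Proposition~\ref{prop:normcomp}$(i)$: $g\leftrightarrow g_{[p]}\leftrightarrow k\leftrightarrow z$ with $z\in Z(K)$, giving distance at most $3$ from any element of $G\setminus HK$ to every element of $Z(K)$, hence diameter at most $6$ for the large component. Likewise, identifying the strongly connected components with the commuting components is not automatic, because one-directional arrows between commuting components do exist (in $S_3$, for instance, the kernel points into each complement); one must check both that no arrow enters $X$ from outside (this is where $p\nmid r-1$ enters, via Proposition~\ref{prop:normcaratt}) and that arrows inside $X$ never join distinct conjugates of $H$ (Lemma~\ref{lem:norm1}, applied in $KH$ and in $G/K$), so that no directed cycle can leave and re-enter a conjugate of $H$. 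You correctly flag these verifications as delicate, but flagging them is not doing them; since they constitute essentially all of the paper's Section~5 machinery beyond Parker's theorem, the proposal as it stands does not yet prove the statement.
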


It is worth mentioning that tighter bounds on the diameter of $\vec\Delta_{\rm norm}(G)$ can be obtained by imposing additional conditions on $G$.
For instance, we prove that ${\rm diam} (\vec\Delta_{\rm norm}(G)) \leq 4$ provided that either $G$ is cyclic-by-abelian (see Proposition \ref{prop:norm:cyclic-by-abelian}), or the Fitting subgroup of $G$ has prime index (see Proposition~\ref{prop:norm:primeindex}).  

Finally, we also study the undirected normalizing graph, denoted by  $\Gamma_{\rm norm}(G)$, which is the undirected graph induced by $\vec \Gamma_{\rm norm}(G)$. This graph has appeared in the literature very recently. More specifically, in \cite{FarrellParker2025} Farrell and Parker classify  when the subgraph induced by $\Gamma_{\rm norm}(G)$ on $G\setminus \{1\}$ is connected, also giving a sharp upper bound on its diameter provided the group is soluble with trivial center. Our main result in this direction shows that $G$ is nilpotent of class at most $3$ whenever $\Gamma_{\rm norm}(G)$ is complete (see Theorem \ref{teo:norm:nilp}).

%%%%%%%%%%%%%%%% SECTION

\section{Preliminaries}
\noindent We begin by recalling some basic definitions and notation related to directed graphs, which will serve as the foundational framework for the results presented in the following sections.

Let $ \vec \Gamma = (V, E) $ be a directed graph.
If $(u,v) \in E$ we write $u \to v$ and we often refer to $u$ as the tail of the arc, and to $v$ as the target. If we have $u \to v$ and $v \to u$ we use the notation $u \leftrightarrow v$. We say that $u$ and $v$ are adjacent in $\vec \Gamma$ if $u \to v$ or $v \to u$.
A simple directed graph $\vec \Gamma=(V,E)$ is \emph{complete} if for every pair of distinct vertices $u,v\in V$ we have 
$u \leftrightarrow v$.

Particular kinds of vertices of directed graphs are defined by the notions of sink and source, where a vertex $v \in V$ is called a \emph{sink} if there are no edges in $E$ with $v$ as a source and it is called a \emph{source} if there are no edges in $E$ with $v$ as a target.
A vertex is called isolated if it is both a sink and a source.
The same notions can be extended to subsets of the vertex set.
Thus, we say that a subset of vertices $S \subseteq V$ is a \emph{sink set} if there are no edges from any vertex in $S$ to any vertex in $V \setminus S$, while it is a \emph{source set} if there are no edges from any vertex in $V \setminus S$ to any vertex in $S$.

Recall that a \emph{directed path} of length $k \geq 1$ in a directed graph $\vec \Gamma=(V,E)$  is a $k+1$-tuple of vertices $P= (v_1, v_2, \dots, v_{k+1})$ such that $v_i \to v_{i+1}$ for each $i = 1, \dots, k$. A path is called \emph{simple} if it contains no repeated vertices. To represent a path $P= (v_1, v_2, \dots, v_k, v_{k+1})$ we use the notation $v_1 \to v_2 \to \dots \to v_{k+1}$. Additionally, we say that the vertex $v_1$ reaches the vertex $v_{k+1}$ in $k$ steps provided the existence of a path  $P= (v_1, v_2, \dots, v_k, v_{k+1})$.

Directed paths give rise to one of the notions we study the most, which is the one of strongly connected graph. We say that
a directed graph $\vec \Gamma=(V,E)$ is \emph{strongly connected} if for every pair of vertices $u,v \in V$, there exist a directed path from $u$ to $v$ and a directed path from $v$ to $u$.
Moreover,  for any two distinct vertices $u,v\in V$ we define the \emph{directed distance}
\[
\vec d(u,v)
=
\min\bigl\{\ell \geq 1 : \text{there exists a directed path of length $\ell$ from $u$ to $v$}\bigr\}.
\]
In other words the directed distance from $u$ to $v$ is the length of the shortest path that connects $u$ to $v$.
Since $\vec \Gamma$ is strongly connected, $\vec d(u,v)$ is well defined for all ordered pairs $(u,v)$. Notice that, in general, $\vec d(u,v)\neq \vec d(v,u)$.
If a directed graph is strongly connected we can define its \emph{diameter}, denoted by $\mathrm{diam}(\vec \Gamma)$, as
\[
\mathrm{diam}(\vec \Gamma) = \sup \bigl\{ \vec d(u,v) \mid u,v \in V, u\neq v \bigr\}.
\]
When a directed graph is not strongly connected, it can be broken into smaller pieces called strongly connected components.
A \emph{strongly connected component} is a maximal subgraph that is strongly connected. That is, it is a subgraph such that any two vertices in the subgraph are connected by a path, and no additional vertices from the graph can be added without losing the property of strong connectivity.
Of course, one can talk about the diameter of any strongly connected component of a directed graph.

We will also frequently refer to Frobenius and $2$-Frobenius groups. Thus we mention some well-known facts.

\begin{lemma}\label{lem:norm1}
    Let $G=KH$ be a Frobenius group, with kernel $K$ and complement $H$. Then $N_G(\langle h \rangle ) \subseteq H$ for all $h \in H \setminus \{1\}$.
\end{lemma}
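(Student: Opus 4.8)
The plan is to prove Lemma \ref{lem:norm1}: in a Frobenius group $G = KH$ with kernel $K$ and complement $H$, we have $N_G(\langle h \rangle) \subseteq H$ for every $h \in H \setminus \{1\}$. Let me sketch the approach.

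Let me recall the key structural facts about Frobenius groups that I would use. A Frobenius group $G$ has a normal subgroup $K$ (the kernel) and a complement $H$ with $G = KH$, $K \cap H = 1$, and the defining property that $H \cap H^g = 1$ for all $g \in G \setminus H$; equivalently, $H$ is self-normalizing and any nontrivial element of $H$ fixes exactly one point in the natural coset action (the action is Frobenius). A crucial consequence is that $K \setminus \{1\}$ is precisely the set of elements fixing no point, and every element of $G$ lies in $K$ or in a unique conjugate of $H$. In particular, $C_G(h) \subseteq H$ for every $h \in H \setminus \{1\}$, since $h$ acts fixed-point-freely on $K$ (the centralizer of a nontrivial complement element meets $K$ trivially).

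The proof I would give proceeds as follows. First I would fix a nontrivial $h \in H$ and take an arbitrary $g \in N_G(\langle h \rangle)$; the goal is to show $g \in H$. Since $g$ normalizes $\langle h \rangle$, conjugation by $g$ permutes the nontrivial powers of $h$, so $h^g = h^i$ for some integer $i$ with $\langle h^i \rangle = \langle h \rangle$. In particular $h^g \in \langle h \rangle \subseteq H$, so $h^g \in H \cap H^{g^{-1}}$ after rewriting: more precisely, $h \in H$ and $h^g \in H$, meaning $h \in H \cap H^{g^{-1}}$. By the Frobenius property $H \cap H^x = 1$ whenever $x \notin H$; since $h \neq 1$ lies in $H \cap H^{g^{-1}}$, we must have $g^{-1} \in H$, hence $g \in H$. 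This is the cleanest route and I expect it to be essentially the intended one.

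The main obstacle, or rather the point requiring care, is the precise bookkeeping of the conjugation: one must verify that $h^g \in H$ forces $h \in H \cap H^{g^{-1}}$ and correctly invoke the disjointness property $H^x \cap H = 1$ for $x \notin H$ in the right form. An alternative, equally valid, is to argue via centralizers and the fact that $N_G(\langle h\rangle)/C_G(\langle h\rangle)$ embeds in $\Aut(\langle h\rangle)$; but the direct disjointness argument is shorter and avoids extra machinery. I would therefore present the disjointness argument, taking care to state the Frobenius disjointness property explicitly at the start so that the one-line deduction $h \in H \cap H^{g^{-1}} \setminus \{1\} \Rightarrow g \in H$ is fully justified.
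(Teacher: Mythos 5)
Your proof is correct. Note that the paper never proves Lemma \ref{lem:norm1}: it is listed among the ``well-known facts'' about Frobenius groups and stated without proof (only Lemma \ref{lem:norm2} gets a proof), so there is no in-paper argument to compare against; your malnormality argument --- $h^g \in \langle h\rangle \subseteq H$ gives $1 \neq h \in H \cap H^{g^{-1}}$, hence $g^{-1}\in H$ --- is the standard proof of this fact. The only point worth flagging is definitional: if one starts from the kernel--complement formulation that the paper implicitly uses (with $C_K(h)=\{1\}$ for all $h\in H\setminus\{1\}$), then the disjointness property $H\cap H^x=\{1\}$ for $x\notin H$ is itself a consequence requiring a short standard derivation, so you should either cite it as an equivalent characterization of Frobenius complements or include the two-line computation.
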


Moreover we fix the following notation.

\begin{definition}\label{2-Frob}
   A group $G$ is a \emph{$2$-Frobenius group} if
 there exist three subgroups $K,H, L $  of $G$ such that $K$ and $KH$ are normal in $ G $,  $KH$ is a Frobenius group with kernel $K$ and complement $H$ and $G/K$ is a Frobenius group with kernel $KH/K$ and complement $L/K$. Moreover, we set $X=\bigcup_{g \in G} H^g$.
\end{definition}

 For completeness we include the proofs of the following result.

\begin{lemma}\label{lem:norm3}\label{lem:norm2}
    Let $G$ be a $2$-Frobenius group as in Definition \ref{2-Frob}. Then:
    \begin{itemize}
        \item[$(i)$] $H$ is cyclic of odd order;
        \item[$(ii)$] for all $g \in G\setminus HK$ such that $o(g)=p$, with $p$ prime, $C_K(g)\neq \{1\}$.
        \item[$(iii)$] $\bigcup_{g\in G}H^g=\bigcup_{k\in K}H^k$
    \end{itemize}
\end{lemma}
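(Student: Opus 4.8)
The plan is to prove the three parts separately, using throughout the two Frobenius quotients of Definition~\ref{2-Frob} together with the standard facts about Frobenius groups: the kernel is nilpotent, kernel and complement have coprime orders, the complement acts fixed-point-freely on the kernel, and (Zassenhaus) every Sylow subgroup of a complement is cyclic, or generalized quaternion when $p=2$. For $(i)$, the first observation is that $K\cap H=1$ (as $H$ is a complement in the Frobenius group $KH$), so $H\cong KH/K$; but $KH/K$ is the Frobenius kernel of $G/K$, hence nilpotent, and therefore $H$ is the direct product of its Sylow subgroups. Being a Frobenius complement, each Sylow $p$-subgroup of $H$ is cyclic for $p$ odd and cyclic or generalized quaternion for $p=2$, so $H$ has at most one involution. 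If $|H|$ were even, this involution $z$ would be the unique element of order $2$, hence characteristic in $H\cong KH/K$; but the complement $L/K$ acts fixed-point-freely on $KH/K$, so no nonidentity element of $L/K$ can fix $z$, while every automorphism of $KH/K$ fixes $z$ --- forcing $L/K=1$, a contradiction. Thus $|H|$ is odd, and a nilpotent group of odd order with all Sylow subgroups cyclic is cyclic.

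For $(ii)$, set $\bar g=gK$. Since $g\notin KH$, the image $\bar g$ is a nontrivial $p$-element of $G/K$ lying outside the kernel $KH/K$, hence conjugate into the complement $L/K$; as kernel and complement of $G/K$ have coprime orders, $p\nmid|KH/K|=|H|$, and since the complement acts fixed-point-freely on the kernel we get $C_{KH/K}(\bar g)=1$. Now assume, for a contradiction, that $C_K(g)=1$. If $p\mid|K|$, then $K\rtimes\langle g\rangle$ is a Frobenius group with kernel $K$ and complement $\langle g\rangle$, forcing $\gcd(|K|,p)=1$, which is absurd. If instead $p\nmid|K|$, then $\langle g\rangle$ acts coprimely on $KH$ (recall $p\nmid|H|$), so the coprime fixed-point formula gives $C_{KH}(g)K/K=C_{KH/K}(\bar g)=1$, that is $C_{KH}(g)\le K$; combined with $C_K(g)=1$ this yields $C_{KH}(g)=1$, so $KH\rtimes\langle g\rangle$ is Frobenius with kernel $KH$, which must then be nilpotent --- contradicting that the Frobenius group $KH$ is not nilpotent. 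In both cases we reach a contradiction, so $C_K(g)\neq 1$.

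For $(iii)$, the inclusion $\bigcup_{k\in K}H^k\subseteq\bigcup_{g\in G}H^g$ is trivial. For the reverse, fix $g\in G$. Since $K\lhd G$ we have $H^g\le (KH)^g=KH$, and from $H^g\cap K=(H\cap K)^g=1$ together with $|H^g|\,|K|=|KH|$ it follows that $H^g$ is a complement to $K$ in $KH$. By Schur--Zassenhaus all complements to $K$ in $KH$ are conjugate in $KH$; and since $N_{KH}(H)=H$ with $K\cap H=1$, the distinct $K$-conjugates $\{H^k:k\in K\}$ already exhaust this single conjugacy class. Hence $H^g=H^k$ for some $k\in K$, giving the claimed equality.

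The step I expect to be the main obstacle is the oddness in $(i)$: one must correctly exploit the two simultaneous roles of $H$ --- as a nilpotent Frobenius kernel, which forces the direct-product-of-Sylows structure, and as a Frobenius complement, which forces cyclic or quaternion Sylow subgroups --- in order to isolate a single characteristic involution and then play it against the fixed-point-free action of $L/K$. In $(ii)$ the delicate points are the case split according to whether $p$ divides $|K|$ and the legitimacy of the coprime fixed-point formula $C_{KH/K}(\bar g)=C_{KH}(g)K/K$, which is why I first secure $p\nmid|H|$; part $(iii)$ is then routine once $H^g$ is recognized as a complement and Schur--Zassenhaus is applied.
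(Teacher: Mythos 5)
Your three arguments are all correct, but your route differs substantially from the paper's for parts $(i)$ and $(ii)$: the paper does not prove these at all, instead citing Lemmas 2.2 and 3.8 of \cite{CL}, and only gives a direct argument for $(iii)$. Your self-contained proof of $(i)$ (playing the two roles of $H$ against each other: nilpotence as the kernel of $G/K$ via Thompson's theorem, cyclic-or-quaternion Sylow subgroups as a complement of $KH$ via Zassenhaus, then killing the putative unique involution with the fixed-point-free action of $L/K$) and of $(ii)$ (the dichotomy $p\mid |K|$ versus $p\nmid |K|$, using the coprime-action centralizer formula $C_{KH/K}(\bar g)=C_{KH}(g)K/K$ --- legitimate here since $\langle g\rangle$ is soluble and $p\nmid |KH|$ --- and the non-nilpotence of the Frobenius group $KH$) are exactly the kind of arguments that underlie the cited lemmas; what your version buys is independence from \cite{CL}, at the cost of explicitly invoking Thompson, Zassenhaus, Schur--Zassenhaus and coprime action. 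For $(iii)$ you and the paper take essentially the same path: the paper compresses into one line the fact that $H^g\leq KH$ is a Frobenius complement of $KH$ and that all such complements are $K$-conjugate, whereas you justify this explicitly by recognizing $H^g$ as a complement to $K$ and applying Schur--Zassenhaus conjugacy together with $N_{KH}(H)=H$ (one can also argue even more simply: writing $x\in KH$ as $x=hk$ gives $H^x=H^k$ directly). No gaps; the only stylistic remark is that an involution is not itself ``characteristic,'' but your parenthetical --- fixed by every automorphism --- is what you actually use.
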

\begin{proof}
   For proof of $(i)$ and $(ii)$ see \cite[Lemma 2.2 and Lemma 3.8]{CL}. For $(iii)$ we argue as follows. Let $g \in G$. Since $KH$ is normal in $G$ we have $H^g \subseteq HK$ and so $\bigcup_{g\in G}H^g\subseteq KH$. Moreover, $KH$ is a Frobenius group with kernel $K$ and thus $\bigcup_{g\in G}H^g\subseteq \bigcup_{g\in KH}H^g=\bigcup_{k\in K}H^k$. The other inclusion is trivial.
\end{proof}

 Lastly, we fix the following notation. If a group $G$ has an element $x$ whose order is divisible by a prime $p$, we denote by $x_p$ the $p$-component of $x$ and with $x_{[p]}$ the power of $x$ of order $p$. Moreover, the symmetric group of degree $n$, the dihedral group of order $2n$, the quaternion group of order $2^n$ will be denoted by $S_n$, $D_{2n}$ and $Q_{2^n}$, respectively.

\section{Universal vertices}
\noindent In this section, we study the set of bidirectional universal vertices of $\vec \Gamma_{\rm norm}(G)$, namely $\Univ(G)$, giving some useful properties satisfied by this set and characterizing groups $G$ for which $\Univ(G)=\{1\}$.

   Studying graphs it is quite natural to ask how the neighborhood of an element looks like. We set $N^+(x)=\{ y \in G \mid \langle x \rangle \unlhd \langle x, y \rangle \}$ and $N^-(x)=\{y \in G \mid \langle y \rangle \unlhd \langle x, y \rangle\}$. 
    \begin{pro}\label{prop:norm:vicinato}
        Let $G$ be a group and let $x \in G$. Then: 
        \begin{itemize}
            \item[$(i)$] $N^+(x)=N_G(\langle x\rangle)$;
            \item[$(ii)$] $\{a \in G \mid \langle a \rangle \unlhd G\} \cup C_G(x) \subseteq N^-(x)$.
        \end{itemize} 
    \end{pro}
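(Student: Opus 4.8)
The plan is to reduce both statements to the elementary observation that, for a subgroup $\langle x,y\rangle$ generated by $x$ and $y$, a subgroup is normal in $\langle x,y\rangle$ as soon as each of the two generators $x,y$ normalizes it. Everything then follows by direct verification, once one keeps careful track of the direction of the arrows.

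For part $(i)$ I would argue by double inclusion. If $y\in N^+(x)$, then $\langle x\rangle\unlhd\langle x,y\rangle$; since $y\in\langle x,y\rangle$, conjugation by $y$ preserves $\langle x\rangle$, which is exactly the assertion $y\in N_G(\langle x\rangle)$. Conversely, if $y\in N_G(\langle x\rangle)$, then both generators of $\langle x,y\rangle$ normalize $\langle x\rangle$: the generator $x$ does so trivially because $x\in\langle x\rangle$, while $y$ does so by hypothesis. By the generator-wise criterion for normality, this yields $\langle x\rangle\unlhd\langle x,y\rangle$, that is $y\in N^+(x)$.

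For part $(ii)$ I would first reformulate $N^-(x)$ through part $(i)$. By definition $y\in N^-(x)$ means $\langle y\rangle\unlhd\langle x,y\rangle=\langle y,x\rangle$, which, applying part $(i)$ to the element $y$, is equivalent to $x\in N^+(y)=N_G(\langle y\rangle)$. With this translation both inclusions become immediate: if $\langle a\rangle\unlhd G$ then $N_G(\langle a\rangle)=G\ni x$, whence $a\in N^-(x)$; and if $y\in C_G(x)$ then $x$ commutes with $y$, so $xyx^{-1}=y\in\langle y\rangle$ and thus $x\in N_G(\langle y\rangle)$, giving $y\in N^-(x)$.

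I do not anticipate any genuine obstacle: the whole argument rests on the single normality-on-generators remark, and the rest is routine. The only point requiring attention is the asymmetry of the arrows—part $(i)$ expresses the out-neighborhood $N^+(x)$ via the normalizer of $\langle x\rangle$, whereas $N^-(x)$ must instead be rephrased in terms of the normalizers of $\langle y\rangle$ as $y$ ranges over the candidate vertices, which is precisely what the reformulation via part $(i)$ achieves.
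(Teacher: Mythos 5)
Your proof is correct and follows essentially the same route as the paper: part $(i)$ rests on the observation that $\langle x\rangle\unlhd\langle x,y\rangle$ exactly when the generator $y$ normalizes $\langle x\rangle$, and part $(ii)$ checks the two inclusions directly (normality in $G$ passes to any subgroup containing $\langle a\rangle$, and commuting elements normalize each other's cyclic subgroups). Your reformulation of $N^-(x)$ via part $(i)$ just makes explicit what the paper's proof does implicitly, so there is nothing substantive to add.
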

\begin{proof}
  Statement $(i)$ follows from the fact that $\langle x \rangle$ is normal in $\langle x,y \rangle$ if and only if $y$ normalizes $\langle x \rangle$. 
   
  We now prove statement $(ii)$.  Let $a \in G$ such that $\langle a \rangle $ is normal in $G$. Then $\langle a \rangle$ is normal in $\langle a, x \rangle$ and so $a \rightarrow x$. Thus $a \in N^-(x)$. If $a\in C_G(x)$ then $a$ commutes with $x$ and thus $a \rightarrow x$.
\end{proof}

Notice that, in general,  the set $N^-(x)$ is not a subgroup of $G$ and also that the inclusion in $(ii)$ can be strict. As an example take the symmetric group $S_3$ and the transposition $x=(1 2)$. Then $N^-(x)=\{1, (1 2), (123), (132)\}$, which is not a subgroup of $S_3$.  Furthermore, consider the symmetric group $S_4$ and $x=(3 4)$. Then $(243) \in N^-(x)$ but $\langle (243) \rangle$ is not normal in $S_4$ and $(234) \not \in C_{S_4}(x)$.

Linked to both sets of neighborhoods one can consider the sets of universal forward and of universal backward vertices. Denote
$\Unim(G)=\{ x \in G \mid x \rightarrow y, \mbox{ for every } y \in G\}$ the set of universal backward vertices, and
$\Unip(G)=\{ x \in G \mid y \rightarrow x, \mbox{ for every } y \in G\}$ the set of universal forward vertices.

\begin{pro}\label{prop:norm:uni-uni+}
        Let $G$ be a group and let $x \in G$. Then:
        \begin{itemize}
            \item[$(i)$] $\Unim(G)=\{a \in G \mid \langle a \rangle \unlhd G\}$;
            \item[$(ii)$] $\Unip(G)= \bigcap_{a \in G} N_G(\langle a \rangle).$
        \end{itemize}
    \end{pro}
\begin{proof}
We first prove $(i)$, noticing that $x \in \Unim(G)$ if and only if  $N^+(x)=G$, so, by $(i)$ of Proposition \ref{prop:norm:vicinato}, if and only if $N_G(\langle x \rangle)=G$, that is $\langle x \rangle \unlhd G$.

  We now prove  $(ii)$. An element $x\in G$ belongs to $ \bigcap_{a \in G} N_G(\langle a \rangle)$ if and only if it normalizes every $a \in G$ and thus if and only if $x \in \Unip(G)$.
    \end{proof}

Notice that $\Unip(G)$ is always a subgroup of $G$. In fact, it is the Baer norm of $G$, which is the intersection of the normalizers of all  subgroups of $G$. It was introduced by Baer in \cite{BaerNorm}. Several properties are satisfied by this subgroup. For example, it has been shown that this is always a characteristic subgroup of $G$ which is contained in the second term of the upper central series of $G$. Meanwhile, the set $\Unim(G)$ is not a subgroup of $G$, as one can see in Example \ref{esempio}.

We denote by $\Univ(G)=\Unim(G)\cap \Unip(G)$ the set of all bidirectional universal vertices. When not specified, we will refer to bidirectional universal vertices as universal vertices.

The following result is a direct consequence of a result in \cite{S}. We write $Z_2(G)$ for the second center of a group $G$.

\begin{proposition}\label{prop:norm:ZUF}
    Let $G$ be a group. Then $Z(G) \subseteq \Univ(G) \subseteq Z_2(G)$. 
\end{proposition}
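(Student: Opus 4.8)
The plan is to handle the two inclusions separately, observing that the first is immediate from the definitions while the second is, by design, a restatement of the cited fact that the Baer norm lies in the second center. The key bookkeeping is to express $\Univ(G)$ through the two sets $\Unim(G)$ and $\Unip(G)$ already computed in Proposition~\ref{prop:norm:uni-uni+}, and then to identify $\Unip(G)$ with the Baer norm.

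For the inclusion $Z(G) \subseteq \Univ(G)$, I would take $x \in Z(G)$ and verify membership in both $\Unim(G)$ and $\Unip(G)$. Since $x$ commutes with every element of $G$, the subgroup $\langle x \rangle$ is central, hence normal in $G$, so $x \in \Unim(G)$ by Proposition~\ref{prop:norm:uni-uni+}(i). Moreover $x$ centralizes, and therefore normalizes, every cyclic subgroup $\langle a \rangle$, so $x \in \bigcap_{a \in G} N_G(\langle a \rangle) = \Unip(G)$ by Proposition~\ref{prop:norm:uni-uni+}(ii). Hence $x \in \Unim(G) \cap \Unip(G) = \Univ(G)$.

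For the inclusion $\Univ(G) \subseteq Z_2(G)$, I would note that by definition $\Univ(G) = \Unim(G) \cap \Unip(G) \subseteq \Unip(G)$, and that Proposition~\ref{prop:norm:uni-uni+}(ii) identifies $\Unip(G) = \bigcap_{a \in G} N_G(\langle a \rangle)$ as the Baer norm of $G$. The result of \cite{S} asserts that the Baer norm is contained in the second term $Z_2(G)$ of the upper central series, which was already recorded in the discussion preceding the statement. Combining these gives $\Univ(G) \subseteq \Unip(G) \subseteq Z_2(G)$, completing the proof.

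There is no genuine obstacle here: the entire substance of the second inclusion is delegated to the classical theorem of \cite{S} on the norm, and my only task is the routine reduction $\Univ(G) \subseteq \Unip(G)$ together with the identification of $\Unip(G)$ as the Baer norm. The argument is therefore short, and the one point requiring care is simply to invoke the correct characterizations from Proposition~\ref{prop:norm:uni-uni+} rather than arguing directly from the arrow relation.
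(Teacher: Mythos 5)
Your proof is correct and follows essentially the same route as the paper: the first inclusion is the observation that central elements trivially normalize and are normalized by everything, and the second reduces $\Univ(G)$ to $\Unip(G)$, identified as the Baer norm, and invokes the theorem of \cite{S} that the norm lies in $Z_2(G)$. Your version merely makes explicit the appeal to Proposition~\ref{prop:norm:uni-uni+}, which the paper leaves implicit.
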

\begin{proof}
    First notice that every element of $Z(G)$ commutes with every element of $G$ and thus it is trivially universal. If $x \in \Univ(G)$ then $x \in \Unip(G)$ which is contained in $Z_2(G)$ by Theorem in \cite{S}. This concludes the proof.
\end{proof}

\begin{corollary}\label{cor:norm:A}
    Let $G$ be a group. Then $\Univ(G)=\{1\}$ if and only if $Z(G)=\{1\}$.
\end{corollary}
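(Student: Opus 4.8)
The plan is to deduce everything from the sandwich established in Proposition~\ref{prop:norm:ZUF}, namely $Z(G)\subseteq \Univ(G)\subseteq Z_2(G)$, so that the corollary reduces to understanding when the outer terms of this chain are trivial. The forward implication is immediate: if $\Univ(G)=\{1\}$, then since $Z(G)\subseteq \Univ(G)$ we get $Z(G)\subseteq\{1\}$, and as the identity always lies in $Z(G)$ we conclude $Z(G)=\{1\}$.

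For the reverse implication, I would observe that it suffices to prove $Z_2(G)=\{1\}$ whenever $Z(G)=\{1\}$: indeed, the inclusion $\Univ(G)\subseteq Z_2(G)$ then forces $\Univ(G)\subseteq\{1\}$, and since the identity is trivially a (bidirectional) universal vertex we obtain $\Univ(G)=\{1\}$. So the only real content is the classical fact that a centerless group has trivial second center.

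The key step, then, is the following. Recall that the second center $Z_2(G)$ is defined by the property $Z_2(G)/Z(G)=Z\bigl(G/Z(G)\bigr)$. Assuming $Z(G)=\{1\}$, the quotient $G/Z(G)$ is naturally identified with $G$ itself, whence $Z\bigl(G/Z(G)\bigr)=Z(G)=\{1\}$. Feeding this back into the defining relation gives $Z_2(G)/\{1\}=\{1\}$, that is $Z_2(G)=\{1\}$, as required. Combining this with $\Univ(G)\subseteq Z_2(G)$ completes the argument.

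There is no serious obstacle here, as the statement is essentially a formal consequence of Proposition~\ref{prop:norm:ZUF} together with a standard property of the upper central series; the only point worth stating carefully is the identification $G/Z(G)\cong G$ when $Z(G)=\{1\}$, which is what makes the second center collapse. The whole corollary can thus be written in a few lines once Proposition~\ref{prop:norm:ZUF} is invoked.
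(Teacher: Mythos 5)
Your proof is correct and follows exactly the route the paper intends: Corollary~\ref{cor:norm:A} is stated as an immediate consequence of the sandwich $Z(G)\subseteq \Univ(G)\subseteq Z_2(G)$ of Proposition~\ref{prop:norm:ZUF}, combined with the standard fact that $Z(G)=\{1\}$ forces $Z_2(G)=\{1\}$. Your write-up simply makes explicit the argument the paper leaves implicit, so there is nothing to add.
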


We observe that in general $ Z(G) \neq \Univ(G) \neq Z_2(G)$; moreover, $\Univ(G)$ need not be a subgroup of $G$.

\begin{example}\label{esempio}
In fact, let $G= \langle x,y \mid x^8=y^2=1, x^y=x^5\rangle$. Then $Z(G)=\{1, x^2, x^4, x^6\}$ has order $4$. Consider $x^2y \in G \setminus Z(G)$. We have $H=\langle x^2y\rangle=\{1, x^2y, x^4, x^6y\}$ which is normal in $G$.    Furthermore, $(x^ny)^{x^2y}=x^{5n}y=(x^ny)^5 \in \langle x^ny \rangle$. Therefore, $x^2y \in \Univ(G)\setminus Z(G)$. Moreover, since $y^x=x^6y \not \in \langle y \rangle$, it follows that $x, y \in G \setminus \Univ(G)$ and so $\Univ(G) \neq G=Z_2(G)$.  It is not difficult to see that  $\Univ(G)=Z(G)\cup \{x^2y, x^4y\}$ and thus it has order $6$ and it is not a subgroup of $G$.  In general, the fact that $\Univ(G)$ is not a subgroup of $G$  also shows that $\Unim(G)$ need not be a subgroup of $G$.
\end{example}

Now we go further in our investigation about universal vertices. 

\begin{lemma}\label{lem:norm:powerconj}\label{lemma:norm:orderprime}\label{cor:norm:centro}
    Let $G$ be a group and $x \in \Univ(G)$. Then:
    \begin{itemize}
        \item[$(i)$] $x^m\in \Univ(G)$ for any integer $m$; 
        \item[$(ii)$] $x^g \in \Univ(G)$ for any $g\in G$;
        \item[$(iii)$] if $x$ has finite order then for any prime $p \in \pi(o(x))$ there exists an element of $ \Univ(G)$ having order $p$; 
        \item[$(iv)$] if $x$ has order $2$, then $Z(G) \neq \{1\}$.
    \end{itemize} 
\end{lemma}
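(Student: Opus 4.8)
The plan is to reduce every assertion to the two defining conditions for membership in $\Univ(G)=\Unim(G)\cap\Unip(G)$ made explicit by Proposition \ref{prop:norm:uni-uni+}: an element $x$ lies in $\Univ(G)$ exactly when $\langle x\rangle\unlhd G$ (this is membership in $\Unim(G)$) and $x\in\bigcap_{a\in G}N_G(\langle a\rangle)$, the Baer norm (this is membership in $\Unip(G)$). Since the Baer norm is a subgroup, one half of each verification is immediate, and the work reduces to controlling normality of the relevant cyclic subgroups. I expect $(ii)$ and $(iii)$ to follow quickly from $(i)$, so the real content sits in $(i)$ and $(iv)$.

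For $(i)$, fix $x\in\Univ(G)$ and an integer $m$. The membership $x^m\in\Unip(G)$ is free, because $\Unip(G)$ is a subgroup containing $x$. For $x^m\in\Unim(G)$ I would verify $\langle x^m\rangle\unlhd G$ directly: given $g\in G$, normality of $\langle x\rangle$ yields $x^g=x^k$ for some integer $k$, whence $(x^m)^g=(x^g)^m=(x^m)^k\in\langle x^m\rangle$. Applying the same to $g^{-1}$ gives the reverse inclusion and hence $\langle x^m\rangle^g=\langle x^m\rangle$, so that $x^m\in\Unim(G)$ and therefore $x^m\in\Univ(G)$.

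Items $(ii)$ and $(iii)$ then come cheaply. For $(ii)$, since $\langle x\rangle\unlhd G$ we have $x^g\in\langle x\rangle$, so $x^g$ is a power of $x$ and $(i)$ applies. For $(iii)$, writing $n=o(x)$ and taking any prime $p\in\pi(n)$, the element $x^{n/p}$ has order exactly $p$ and lies in $\Univ(G)$ by $(i)$, supplying the required element. The genuinely different argument is $(iv)$: if $o(x)=2$ then $\langle x\rangle=\{1,x\}$, and since conjugation is an automorphism it preserves orders, so for every $g\in G$ the element $x^g\in\langle x\rangle$ is nontrivial, forcing $x^g=x$. Thus $x$ commutes with all of $G$, giving $x\in Z(G)$ and hence $Z(G)\neq\{1\}$.

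The arguments are elementary once the characterizations of $\Unim(G)$ and $\Unip(G)$ from Proposition \ref{prop:norm:uni-uni+} are invoked; the only point requiring a little care is the conjugation computation in $(i)$ (reused in $(ii)$), where one must pass to $g^{-1}$ to upgrade the inclusion $\langle x^m\rangle^g\subseteq\langle x^m\rangle$ to an equality in the possibly infinite setting. The conceptual highlight is $(iv)$, where the hypothesis $o(x)=2$ is precisely what collapses the two-element cyclic subgroup and forces $x$ to be central.
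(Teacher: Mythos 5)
Your proof is correct and follows essentially the same route as the paper's: the same power/conjugation computation for $(i)$, the same reductions of $(ii)$ and $(iii)$ to $(i)$, and the same order-two argument forcing centrality in $(iv)$. The only minor (and if anything cleaner) deviation is that for the $\Unip(G)$ half of $(i)$ you invoke the fact that $\Unip(G)$ is a subgroup (the Baer norm), whereas the paper instead verifies directly that $g^{x^m}\in\langle g\rangle$.
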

\begin{proof}
Let $m$ be an integer and $g \in G$. Then there exists a suitable integer $n$ such that $(x^m)^g =(x^g)^m=x^{nm}=(x^m)^n \in \langle x^m \rangle$. This proves $(i)$. Furthermore, conjugating $g$ by $x$ gives a power of $g$ and thus we have also $g^{x^m}\in \langle g \rangle$.

  We now prove $(ii)$. Since $\langle x \rangle$ is normal in $G$ we have $x^g=x^m$ for some integer $m$. Thus, $x^g$ is a power of a universal vertex and so by statement $(i)$ we have $x^g \in \Univ(G)$.

Statement $(iii)$ is an immediate consequence of $(i)$.

Lastly, we prove statement $(iv)$.  Let $x \in \Univ(G)$  of order $2$. In particular, for any $g \in G$ we have that $ g$ normalizes $\langle x \rangle$, and thus it centralizes $x$. It follows that $x \in Z(G)$.
  
\end{proof}

From the previous result it follows that $\Univ(G)$ is a union of conjugacy classes.

\begin{lemma}
    Let $G$ be a finite group and let $x, y \in \Univ(G)$ of coprime order such that $[x,y]=1$. Then $xy\in \Univ(G)$. 
\end{lemma}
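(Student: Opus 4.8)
The plan is to verify the two defining conditions for membership in $\Univ(G)=\Unim(G)\cap\Unip(G)$ separately, using the descriptions from Proposition \ref{prop:norm:uni-uni+}. Concretely, $xy\in\Univ(G)$ precisely when $\langle xy\rangle\unlhd G$ (so that $xy\in\Unim(G)$, by Proposition \ref{prop:norm:uni-uni+}$(i)$) and $xy$ normalizes every cyclic subgroup of $G$ (so that $xy\in\Unip(G)$, by Proposition \ref{prop:norm:uni-uni+}$(ii)$). I would treat these two requirements independently.

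The forward-universal condition comes essentially for free. Since $x,y\in\Univ(G)\subseteq\Unip(G)$ and $\Unip(G)$ is the Baer norm of $G$, hence a subgroup (as recalled after Proposition \ref{prop:norm:uni-uni+}), the product $xy$ again lies in $\Unip(G)$. Notice that this step uses neither the coprimality of the orders, the commutativity of $x$ and $y$, nor the finiteness of $G$.

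For the backward-universal condition I would first identify the cyclic subgroup generated by $xy$. Because $[x,y]=1$ and $o(x)$, $o(y)$ are coprime, the subgroup $\langle x\rangle\langle y\rangle$ is abelian with $\langle x\rangle\cap\langle y\rangle=\{1\}$ and $o(xy)=o(x)o(y)$; comparing orders gives $\langle xy\rangle=\langle x\rangle\langle y\rangle=\langle x,y\rangle$. Now $x,y\in\Unim(G)$ yields $\langle x\rangle\unlhd G$ and $\langle y\rangle\unlhd G$ by Proposition \ref{prop:norm:uni-uni+}$(i)$, and a product of two normal subgroups is again normal, so $\langle xy\rangle=\langle x\rangle\langle y\rangle\unlhd G$. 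Hence $xy\in\Unim(G)$.

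Combining the two parts gives $xy\in\Unim(G)\cap\Unip(G)=\Univ(G)$, as required. The only place where the extra hypotheses beyond $x,y\in\Univ(G)$ are actually used is the identification $\langle xy\rangle=\langle x\rangle\langle y\rangle$, which is exactly where commutativity and coprimality of the orders enter, and where finiteness is invoked merely to make ``coprime order'' meaningful. This is the sole point to check with care, though it is entirely standard, so I expect no genuine obstacle in the argument.
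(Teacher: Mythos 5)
Your proof is correct, and while it shares the paper's overall skeleton (verify $xy\in\Unip(G)$ and $xy\in\Unim(G)$ separately), the backward half is argued by a genuinely different route. The forward halves coincide: the paper notes $x,y\in N_G(\langle g\rangle)$ implies $xy\in N_G(\langle g\rangle)$ for every $g$, which is exactly your appeal to $\Unip(G)$ being a subgroup. For the backward half, the paper computes with exponents: it writes $(xy)^g=x^gy^g=x^ny^m$ (using $\langle x\rangle,\langle y\rangle\unlhd G$) and invokes the Chinese Remainder Theorem to find a single $k$ with $k\equiv n\pmod{o(x)}$ and $k\equiv m\pmod{o(y)}$, so that $(xy)^g=(xy)^k\in\langle xy\rangle$. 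You instead argue structurally: coprimality and commutativity give $o(xy)=o(x)o(y)$, hence $\langle xy\rangle=\langle x\rangle\langle y\rangle$, and a product of two normal subgroups is normal. The two arguments use the hypotheses at the same point and are equivalent at bottom (the identification $\langle xy\rangle=\langle x\rangle\langle y\rangle$ is itself a CRT statement), but yours is shorter once the standard facts $o(xy)=o(x)o(y)$ and $NM\unlhd G$ for $N,M\unlhd G$ are granted, whereas the paper's congruence computation is more self-contained. Your closing observation is also accurate: neither argument uses finiteness of $G$ beyond the requirement that $x$ and $y$ have finite coprime orders.
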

    \begin{proof}
       Let $g \in G$. From $x,y \in \Univ(G)$ we have $x,y \in N_G(\langle g \rangle)$. It follows that $xy \in N_G(\langle g \rangle)$. Consider now $(xy)^g$. We have 
        \[
        (xy)^g=x^gy^g=x^ny^m,
        \]
        for some integers $n$ and $m$. Since $(o(x), o(y))=1$ there exists an integer $k$ such that $k \equiv n \pmod{o(x)}$ and  $k \equiv m \pmod{o(y)}$. Thus $x^ny^m=x^ky^k$. Since $x$ and $y$ commute we have $x^ky^k=(xy)^k$. Therefore $g$ normalizes $\langle xy \rangle$ and $xy \in \Univ(G)$.
    \end{proof}

It is possible to generalize statement $(iv)$ of Lemma \ref{lem:norm:powerconj} showing that every element of prime order of $\Univ(G)$ is central.

\begin{proposition}\label{prop:norm:fundamental}
    Let $G$ be a finite group and let $x \in \Univ(G)$. If $x$ has prime order, then $x \in Z(G)$.
\end{proposition}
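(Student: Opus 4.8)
The plan is to prove the statement by showing directly that $x^g=x$ for every $g\in G$, which is exactly the assertion $x\in Z(G)$. Two facts drive the argument. First, since $x\in\Univ(G)\subseteq\Unim(G)$, part $(i)$ of Proposition~\ref{prop:norm:uni-uni+} gives $\langle x\rangle\unlhd G$. Second, Proposition~\ref{prop:norm:ZUF} yields $x\in\Univ(G)\subseteq Z_2(G)$, so the image of $x$ in $G/Z(G)$ is central; equivalently $[x,g]\in Z(G)$ for all $g\in G$.

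Now fix $g\in G$. Because $\langle x\rangle$ is normal in $G$ and $x$ has order the prime $p$, conjugation by $g$ sends $x$ to a power of itself, say $x^g=x^k$ with $1\le k\le p-1$ (note $x^g\ne 1$, so $k\not\equiv 0 \pmod p$). Computing the commutator gives $[x,g]=x^{-1}x^g=x^{k-1}$, an element of $\langle x\rangle$. Combining this with $[x,g]\in Z(G)$ shows $x^{k-1}\in Z(G)$.

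The crucial point is that $\langle x\rangle$ has prime order. Hence either $x^{k-1}=1$, in which case $k\equiv 1\pmod p$ and $x^g=x$; or $x^{k-1}\ne 1$, in which case $x^{k-1}$ is a generator of $\langle x\rangle$, so that $\langle x\rangle=\langle x^{k-1}\rangle\subseteq Z(G)$ and again $x^g=x$. In both cases $x$ is fixed under conjugation by $g$, and since $g$ was arbitrary we conclude $x\in Z(G)$.

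I would expect no serious obstacle here: the only subtlety is recognising that primality of $o(x)$ forces the commutator $x^{k-1}$ to be either trivial or a generator of $\langle x\rangle$, which is precisely what upgrades the containment $x\in Z_2(G)$ to $x\in Z(G)$. This argument also generalises part $(iv)$ of Lemma~\ref{lem:norm:powerconj}, where the prime is $2$ and the reasoning degenerates to the observation that the only nontrivial power of an involution is itself.
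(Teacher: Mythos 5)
Your proof is correct, and there is no circularity: Proposition~\ref{prop:norm:ZUF} is established before this proposition and does not depend on it, so you are entitled to invoke it. However, your route is genuinely different from the paper's. The paper gives a self-contained argument by contradiction: assuming $x \notin Z(G)$, it shows $G/C_G(\langle x\rangle)$ is a nontrivial cyclic group of order dividing $p-1$, extracts an element $y \notin C_G(\langle x\rangle)$ whose order involves only primes dividing $p-1$, and then uses the fact that $x$ normalizes $\langle y\rangle$ (this is where $x \in \Unip(G)$ enters directly) together with $\bigl(p, |\Aut(\langle y\rangle)|\bigr)=1$ to force $x$ to centralize $y$ --- a contradiction. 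Your argument instead funnels everything through the containment $\Univ(G) \subseteq Z_2(G)$, so that $[x,g] \in \langle x\rangle \cap Z(G)$, and then primality of $o(x)$ makes this commutator either trivial or a generator of $\langle x\rangle$; this is shorter, and it does not use finiteness of $G$ at all, so it actually proves the statement for arbitrary groups. What you lose is independence from Schenkman's theorem: Proposition~\ref{prop:norm:ZUF} rests on the result of \cite{S} that the Baer norm lies in $Z_2(G)$, whereas the paper's proof deliberately avoids \cite{S}. This matters for the remark immediately following the proposition, which observes that Proposition~\ref{prop:norm:fundamental} combined with Lemma~\ref{lem:norm:powerconj}~$(i)$ yields an alternative proof of Corollary~\ref{cor:norm:A} for finite groups \emph{avoiding} \cite{S}; with your proof, that claim would collapse, since the dependence on \cite{S} would be reintroduced through Proposition~\ref{prop:norm:ZUF}.
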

\begin{proof}
    By Lemma \ref{lemma:norm:orderprime} we can assume that $x$ has order $p$, where $p$ is an odd prime. Let $C=C_G(\langle x\rangle)$. Suppose by contradiction that $x \not \in Z(G)$. Then $C$ is a proper subgroup of $G$. Since $\langle x \rangle \unlhd G$, we have $N_G(\langle x \rangle)/C=G/C$ and it is isomorphic to a nontrivial subgroup of $\Aut(\langle x \rangle )$ which is cyclic of order $p-1$. Thus $G/C$ is cylic.  Let $d=|G/C|=q_1^{\alpha_1}\dots q_s^{\alpha_s}$, with $q_i$ different primes  and $\alpha_i \in \N$ for all $i$. Consider $z \in G \setminus C$. Then $o(zC)$ divides $d$. Suppose $t$ is a prime that divides $o(z)$ but not $o(zC)$ and say $t^\ell$ the maximum power of $t$ that divides $o(z)$. Then $z^{t^\ell}C\neq C$, since $t$ and $o(zC)$ are coprime. Thus, there exists $y \in G\setminus C$ of order $q_1^{r_1}\dots q_s^{r_s}$, with $r_i \in \N$ for all $i$. However, $x$ normalizes $\langle y \rangle$ and so there exists an homomorphism from $\langle x \rangle$ to $\Aut(\langle y \rangle )$. Since $|\Aut(\langle y \rangle )|=q_1^{r_1-1}(q_1-1)\dots q_s^{r_s-1}(q_s-1)$ we have $(p, |\Aut(\langle y \rangle )|)=1$. Therefore, the only possibility is that $x$ centralizes $y$, which is a contradiction.  
\end{proof}

We point out that from Proposition \ref{prop:norm:fundamental} and  Lemma \ref{lem:norm:powerconj} $(i)$ it is possible to obtain an alternative proof of Corollary \ref{cor:norm:A} for finite groups, which avoids using the result in \cite{S}.

The following result helps us to describe the set $\Univ(G)$ when $G$ is a decomposable finite group.

\begin{pro}\label{prop:norm:univ}
    Let $G=H\times K$ be a finite group.  Then:
    \begin{itemize}
        \item[$(i)$] $\Unip(G)\subseteq \Unip(H)\times \Unip(K)$;
        \item [$(ii)$] $\Unim(G)\subseteq \Unim(H)\times \Unim(K)$;
        \item[$(iii)$] $ \Univ(G) \subseteq \Univ(H)\times \Univ(K)  $;
        \item[$(iv)$] if $(|H|,|K|)=1$ then $ \Univ(G) = \Univ(H)\times \Univ(K)  $;
        \item[$(v)$]if $Z(H)=\Univ(H)$ and $Z(K)=\Univ(K)$ then $ \Univ(G) = \Univ(H)\times \Univ(K)$.
    \end{itemize}
\end{pro}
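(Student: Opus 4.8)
The plan is to dispatch the three inclusions $(i)$--$(iii)$ by coordinate/projection arguments, and then upgrade $(iv)$ and $(v)$ to equalities by supplying the reverse inclusions under the respective hypotheses. For $(i)$, I would take $(h,k)\in\Unip(G)$ and, recalling from Proposition \ref{prop:norm:uni-uni+} that $\Unip(G)=\bigcap_{g\in G}N_G(\langle g\rangle)$, test it against the cyclic subgroups $\langle(a,1)\rangle=\langle a\rangle\times\{1\}$. Conjugation gives $(h,k)(a,1)(h,k)^{-1}=(hah^{-1},1)$, which must lie in $\langle(a,1)\rangle$; hence $hah^{-1}\in\langle a\rangle$ for every $a\in H$, so $h\in\Unip(H)$, and symmetrically $k\in\Unip(K)$. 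For $(ii)$, I would use $\Unim(G)=\{g:\langle g\rangle\unlhd G\}$ from Proposition \ref{prop:norm:uni-uni+}: if $\langle(h,k)\rangle\unlhd G$, then for each $a\in H$ the conjugate $(a,1)(h,k)(a,1)^{-1}=(aha^{-1},k)$ equals some power $(h^n,k^n)$, and reading the first coordinate yields $aha^{-1}=h^n\in\langle h\rangle$, so $\langle h\rangle\unlhd H$, and likewise $\langle k\rangle\unlhd K$.

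Statement $(iii)$ is then formal: since $\Univ(G)=\Unim(G)\cap\Unip(G)$ and intersection distributes over direct products coordinatewise, combining $(i)$ and $(ii)$ gives $\Univ(G)\subseteq(\Unim(H)\cap\Unip(H))\times(\Unim(K)\cap\Unip(K))=\Univ(H)\times\Univ(K)$. By $(iii)$, only the reverse inclusion remains for $(iv)$ and $(v)$. The elementary engine for $(iv)$ is the identity $\langle(a,b)\rangle=\langle a\rangle\times\langle b\rangle$ whenever $\gcd(o(a),o(b))=1$, which follows from an order count since then $o((a,b))=o(a)\,o(b)$. Under the hypothesis $\gcd(|H|,|K|)=1$, every pair $(a,b)\in G$ has coprime component orders, so for $(h,k)\in\Univ(H)\times\Univ(K)$ the conjugate $(hah^{-1},kbk^{-1})$ lies in $\langle a\rangle\times\langle b\rangle=\langle(a,b)\rangle$, giving $(h,k)\in\Unip(G)$; and $\langle(h,k)\rangle=\langle h\rangle\times\langle k\rangle\unlhd G$ gives $(h,k)\in\Unim(G)$, whence $(h,k)\in\Univ(G)$.

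For $(v)$ I would instead use the central sandwich: if $Z(H)=\Univ(H)$ and $Z(K)=\Univ(K)$, then $\Univ(H)\times\Univ(K)=Z(H)\times Z(K)=Z(H\times K)=Z(G)$, and Proposition \ref{prop:norm:ZUF} gives $Z(G)\subseteq\Univ(G)$, which is exactly the reverse inclusion. The genuinely new content lies in the reverse inclusions of $(iv)$ and $(v)$, as $(i)$--$(iii)$ are mere projections. Among these, $(iv)$ is the delicate one: the decomposition $\langle(a,b)\rangle=\langle a\rangle\times\langle b\rangle$ holds only in the coprime case, which is precisely where $\gcd(|H|,|K|)=1$ is indispensable. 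Without coprimality the conjugate $(hah^{-1},kbk^{-1})$ need not be a power of $(a,b)$, so the normalization of these cyclic subgroups can fail; I expect checking and applying this decomposition to be the crux, whereas $(v)$ is comparatively soft once $Z(G)\subseteq\Univ(G)$ is invoked.
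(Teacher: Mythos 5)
Your proposal is correct and follows essentially the same route as the paper's proof: coordinatewise/projection arguments for $(i)$--$(iii)$, coprimality (Chinese Remainder) for the reverse inclusion in $(iv)$, and the sandwich $\Univ(H)\times\Univ(K)=Z(H)\times Z(K)=Z(G)\subseteq \Univ(G)$ via Proposition \ref{prop:norm:ZUF} for $(v)$. The only cosmetic differences are that you test universality against the embedded elements $(a,1)$ and $(1,b)$ rather than general pairs, and in $(iv)$ you package the Chinese Remainder step as the identity $\langle (a,b)\rangle=\langle a\rangle\times\langle b\rangle$ (so that the $\Unim$ direction follows from normality of a product of normal cyclic factors), whereas the paper writes out the exponent congruences explicitly.
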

\begin{proof}
    We first prove statement $(i)$. Let $g \in \Unip(G)$. Then there exist $h \in H$ and $k \in K$ such that $g=(h,k)$. Let $h_1 \in H$ and $k_1 \in K$. Then $(h_1, k_1)\in G$, thus $(h_1,k_1)^g=(h_1,k_1)^n=(h_1^n,k_1^n)$, for some integer $n$. However, $(h_1,k_1)^g=(h_1,k_1)^{(h,k)}=(h_1^h,k_1^k)$. Therefore $h_1^h=h_1^n$, $k_1^h=k_1^n$ and so $h \in \Unip(H)$ and $k \in \Unip(K)$. 

    For statement $(ii)$ we argue similarly. Let $g \in \Unim(G)$. Then there exist $h \in H$ and $k \in K$ such that $g=(h,k)$. Let $h_1 \in H$ and $k_1 \in K$. Then $(h_1, k_1)\in G$, thus 
    $(h,k)^{(h_1,k_1)}=(h,k)^n=(h^n,k^n)$ for some integer $n$. However, $(h,k)^{(h_1,k_1)}=(h^{h_1},k^{k_1})$. Therefore $h^{h_1}=h^n$ and $k^{k_1}=k^n$ and so $h \in \Unim(H)$ and $k \in \Unim(K)$.

Statement $(iii)$ follows from $(i)$ and $(ii)$. 
   
    We now prove $(iv)$. By $(iii)$, we only need to prove one inclusion. Let $h \in \Univ(H)$ and $k \in \Univ(K)$. Let $g=(h_1,k_1)\in G$. Then $(h,k)^g=(h^{h_1},k^{k_1})=(h^n,k^m)$ for some integers $n$ and $m$. However, since $o (h)$ and $o(k)$ are coprime, there exists $x \in Z$ such that $x \equiv n \pmod {o(h)}$ and $x \equiv m \pmod {o(k)}$. Therefore $(h,k)^g=(h^n, k^m)=(h^x,k^x)=(h,k)^x$ and $(h,k)\in \Unim(G)$. Moreover, $g^{(h,k)}=(h_1^s,k_1^t)$ for some integers $s$ and $t$. As before there exists an integer $y$ such that $(h_1^s,k_1^t)=(h_1,k_1)^y$. Thus $(h,k)\in \Unip(G)$ and we are done.

    Lastly, we prove $(v)$. By $(iii)$ we only need to prove one inclusion. By hypothesis $\Univ(H) \times \Univ(K)=Z(H)\times Z(K)=Z(G)\subseteq \Univ(G)$.
\end{proof}

In general the inclusions in $(i), (ii)$ and $(iii)$ of Proposition \ref{prop:norm:univ} can be strict. For instance, consider $G=Q_8 \times Q_8$. Then $\Univ(G)=Z(G)$ which has order $4$, while $Q_8$ is a Dedekind group, and thus $\Univ(Q_8)=Q_8$.

The next two results will be very useful in the study of the diameter of the graph $\vec \Delta_{\rm norm}(G)$, for a decomposable group $G$. 

\begin{lemma}\label{lem:norm:univinH}
Let $G=H\times K$, where $H$ is a non-abelian finite group. Let $x \in \Univ(H)\setminus Z(H)$ and $y \in H$, and let $n$ be an integer such that $y^x=y^n\neq y$. If there exists $k \in K$ such that $o(k)=o(y)$ then $(x,1)\not \in \Univ(G)$.
\end{lemma}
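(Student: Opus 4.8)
The plan is to show the stronger failure that $(x,1)\notin\Unip(G)$, which suffices since $\Univ(G)=\Unim(G)\cap\Unip(G)\subseteq\Unip(G)$. By Proposition \ref{prop:norm:uni-uni+}$(ii)$ we have $\Unip(G)=\bigcap_{a\in G}N_G(\langle a\rangle)$, so it is enough to produce a single cyclic subgroup of $G$ that $(x,1)$ does not normalize. The natural test element is $(y,k)$, where $y$ is the given element satisfying $y^x=y^n\neq y$ and $k\in K$ is the element with $o(k)=o(y)$ provided by the hypothesis; the whole point of the order condition $o(k)=o(y)$ is to synchronise the two coordinates so that the failure $y^n\neq y$ in $H$ cannot be undone inside the product.

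First I would compute the conjugate $(y,k)^{(x,1)}$. Since conjugation in a direct product acts coordinatewise and $1\in K$ centralises $k$, this equals $(y^x,k)=(y^n,k)$, using that $x$ normalises $\langle y\rangle$ in $H$ (as $x\in\Univ(H)$) so that $y^x$ is indeed the power $y^n$. Next I would verify that $(y^n,k)\notin\langle(y,k)\rangle$. Every power has the form $(y,k)^i=(y^i,k^i)$, so an equality $(y^n,k)=(y^i,k^i)$ would force simultaneously $k^i=k$ and $y^i=y^n$. The first condition gives $i\equiv 1\pmod{o(k)}$, and since $o(k)=o(y)$ the second gives $i\equiv n\pmod{o(y)}$; combining these yields $n\equiv 1\pmod{o(y)}$, i.e.\ $y^n=y$, contradicting the assumption $y^n\neq y$.

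It then follows that $(x,1)$ does not send $\langle(y,k)\rangle$ into itself, so $(x,1)\notin N_G(\langle(y,k)\rangle)$, whence $(x,1)\notin\Unip(G)$ and a fortiori $(x,1)\notin\Univ(G)$, as required. The argument is a short direct computation, and I do not expect a genuine obstacle: the only nontrivial choice is selecting $(y,k)$ as the witness, after which the result is forced by comparing the two order-$o(y)$ congruences. The hypotheses that $H$ is non-abelian and $x\in\Univ(H)\setminus Z(H)$ serve only to guarantee that an element $y$ with $y^x=y^n\neq y$ exists; since such a $y$ is already part of the data, they need not be invoked again in the computation itself.
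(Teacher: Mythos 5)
Your proof is correct and follows essentially the same route as the paper: conjugate the witness $(y,k)$ by $(x,1)$ to get $(y^n,k)$, then use the equal orders $o(k)=o(y)$ to show the two congruences $i\equiv n$ and $i\equiv 1 \pmod{o(y)}$ would force $y^n=y$, so $(y^n,k)\notin\langle(y,k)\rangle$. The only difference is cosmetic: you spell out the congruence clash that the paper merely asserts, and you route the conclusion explicitly through $\Unip(G)$.
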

\begin{proof}
   By hypothesis, $(y,k)^{(x,1)}=(y^n,k)$. Since $o(k)=o(y)$ there are no integers $m$ such that $m \equiv n \pmod{o(y)}$ and $m \equiv 1 \pmod{o(k)}$, and thus $(y^n,k)\not \in \langle (y,k) \rangle$. Therefore $(x,1) \not \in \Univ(G)$.
\end{proof}

\begin{lemma}\label{lem:norm:univ-}
    Let $G=H\times K$ be a group. Then:
    \begin{itemize}
        \item[$(i)$] if $(a, 1) \in \Univ(G)$ and $(1,b) \in \Univ(G)$, then $(a,b) \in \Unip(G)$;
        \item[$(ii)$] if $a \in \Univ(H)$, then $(a,1) \in \Unim(G)$;
        \item[$(iii)$] if $b \in \Univ(K)$, then $(1,b) \in \Unim(G)$.
    \end{itemize}
\end{lemma}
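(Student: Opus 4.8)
The three statements follow quickly once we invoke the characterizations of $\Unim$ and $\Unip$ from Proposition \ref{prop:norm:uni-uni+}, together with the already-noted fact that $\Unip(G)$ is a subgroup of $G$ (the Baer norm). I would not attempt a direct neighborhood computation; instead the plan is to reduce each part to a structural property of these three sets.

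For part $(i)$, the plan is to exploit that $\Unip(G)$ is a subgroup. Since $\Univ(G)=\Unim(G)\cap\Unip(G)$, the two hypotheses immediately give $(a,1)\in\Unip(G)$ and $(1,b)\in\Unip(G)$. Because $\Unip(G)$ is closed under multiplication and $(a,1)(1,b)=(a,b)$ in the direct product, we conclude $(a,b)\in\Unip(G)$. Nothing beyond the group law of $H\times K$ is required.

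For part $(ii)$, I would use the description $\Unim(G)=\{\,g\in G\mid \langle g\rangle\unlhd G\,\}$ supplied by Proposition \ref{prop:norm:uni-uni+}$(i)$. From $a\in\Univ(H)\subseteq\Unim(H)$ we obtain $\langle a\rangle\unlhd H$, so it suffices to check that $\langle (a,1)\rangle$ is normal in $H\times K$. For an arbitrary $(h,k)\in G$ one computes $(a,1)^{(h,k)}=(a^h,1)$, and since $a^h\in\langle a\rangle$ we may write $a^h=a^m$ for some integer $m$, whence $(a^h,1)=(a,1)^m\in\langle (a,1)\rangle$. Thus $\langle(a,1)\rangle\unlhd G$, i.e. $(a,1)\in\Unim(G)$. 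Part $(iii)$ is then settled by the symmetric argument, exchanging the roles of $H$ and $K$.

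There is no genuine obstacle in this lemma; the only care needed is to cite the appropriate structural facts—the subgroup property of $\Unip(G)$ for $(i)$, and the normality description of $\Unim(G)$ for $(ii)$ and $(iii)$—rather than unwinding the definitions of $N^+$ and $N^-$ by hand.
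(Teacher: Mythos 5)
Your proposal is correct and follows essentially the same route as the paper: part $(i)$ via the subgroup property of $\Unip(G)$ applied to the product $(a,1)(1,b)=(a,b)$, and parts $(ii)$--$(iii)$ by the direct conjugation computation $(a,1)^{(h,k)}=(a^h,1)=(a,1)^m$, using that $\langle a\rangle\unlhd H$. The only cosmetic difference is that you explicitly cite Proposition \ref{prop:norm:uni-uni+}$(i)$ to phrase the conclusion as normality of $\langle(a,1)\rangle$ in $G$, which the paper leaves implicit.
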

\begin{proof}
 Statement $(i)$ follows from the fact that  $(a, 1) \in \Unip(G)$,  $(1,b) \in \Unip(G)$ and $\Unip(G)$ is a subgroup of $G$.

We now prove $(ii)$. Let $a \in \Univ(H)$ and $(c,d) \in G$. Then $(a,1)^{(c,d)}=(a^c,1)=(a^n,1)$, for some integer $n$. Thus, $(a,1)^{(c,d)}=(a,1)^n\in \langle (a,1) \rangle$ and so $(a,1) \in \Unim(G)$.

The statement $(iii)$ is proven in the same way as $(ii)$.

\end{proof}

\section{Completeness}
\noindent Let $G$ be a group. In this section, we focus on the graphs $\vec\Gamma_{\rm norm}(G)$ and the undirected graph induced by it, namely $\Gamma_{\rm norm}(G)$, with the aim of understanding when these are complete. In particular, we characterize when  $\vec\Gamma_{\rm norm}(G)$ is complete for any group $G$ and, using completeness of  $\Gamma_{\rm norm}(G)$, we provide a sufficient condition for a group to be nilpotent.

\begin{thm}\label{thm:normcompl}
   A group $G$ is Dedekind if and only if $\vec \Gamma_{\rm norm}(G)$ is complete.
\end{thm}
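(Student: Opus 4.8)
The plan is to prove the equivalence by unwinding the definitions on both sides and using the characterizations of universal vertices established in the previous section. Recall that a group $G$ is \emph{Dedekind} precisely when every subgroup is normal, and that $\vec\Gamma_{\rm norm}(G)$ being complete means $x \leftrightarrow y$ for every pair of distinct vertices $x,y$, i.e. $\langle x\rangle \unlhd \langle x,y\rangle$ and $\langle y\rangle \unlhd \langle x,y\rangle$ for all $x,y\in G$. The cleanest route is to observe that completeness of $\vec\Gamma_{\rm norm}(G)$ is equivalent to the statement that every vertex is universal, i.e. $\Univ(G)=G$, and then translate $\Univ(G)=G$ into the Dedekind condition.

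For the forward direction, suppose $G$ is Dedekind. Then every cyclic subgroup $\langle x\rangle$ is normal in $G$, hence normal in any subgroup containing it, in particular in $\langle x,y\rangle$ for every $y\in G$. This gives $x\to y$ for all $x,y$, and by symmetry $y\to x$, so $x\leftrightarrow y$ and the graph is complete. This direction is essentially immediate from the definition of normality restricting to subgroups.

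For the converse, suppose $\vec\Gamma_{\rm norm}(G)$ is complete. Then for every $x\in G$ we have $x\to y$ for all $y\in G$, which by Proposition~\ref{prop:norm:vicinato}$(i)$ means $N^+(x)=N_G(\langle x\rangle)=G$, i.e. $\langle x\rangle \unlhd G$. Thus every \emph{cyclic} subgroup of $G$ is normal. The remaining point is to upgrade ``every cyclic subgroup is normal'' to ``every subgroup is normal.'' This follows because an arbitrary subgroup $S\leq G$ is generated by its cyclic subgroups: for any $g\in G$ and $s\in S$ we have $s^g\in\langle s\rangle\le S$ since $\langle s\rangle\unlhd G$, so $S^g\subseteq S$ and $S$ is normal. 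Hence $G$ is Dedekind.

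I do not expect a genuine obstacle here, since the statement is a clean reformulation; the only mild subtlety is making the final upgrade step explicit, namely that normality of all cyclic subgroups forces normality of all subgroups. Alternatively one can phrase the whole argument through $\Univ(G)$: completeness says $\Univ(G)=G$, and since by Proposition~\ref{prop:norm:uni-uni+}$(i)$ the condition $x\in\Unim(G)$ is exactly $\langle x\rangle\unlhd G$, having $\Univ(G)=G$ forces $\langle x\rangle\unlhd G$ for all $x$, from which the Dedekind property follows as above. I would present the direct argument, reserving the characterization of $\Unim(G)$ as the conceptual reason behind the equivalence.
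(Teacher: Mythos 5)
Your proposal is correct and follows essentially the same route as the paper: the forward direction is immediate from cyclic subgroups being normal in $G$, and the converse uses the same conjugation argument ($h^g \in \langle h\rangle \subseteq H$ for any subgroup $H$, any $h \in H$, $g \in G$) to conclude every subgroup is normal. Your intermediate step of first recording $\langle x\rangle \unlhd G$ via $N^+(x)=G$ is only a minor reorganization of the paper's argument, not a different method.
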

\begin{proof}
If $G$ is a Dedekind group, then for every $x \in G$ we have $\langle x \rangle$ is normal in $G$. Therefore for every $x,y \in G$ we have $\langle x \rangle$ is normal in $\langle x,y \rangle$. Thus $\vec \Gamma_{\rm norm}(G)$ is complete.

On the other hand, assume that $\vec \Gamma_{\rm norm}(G)$ is complete. Let $H$ be a subgroup of $G$. For every $h \in H$ and for every $g \in G$ we have $\langle h \rangle$  is normal in $\langle h, g \rangle$. Thus $h^g \in H$, so $H$ is normal in $G$.
\end{proof}

Let now consider the undirected graph $\Gamma_{\rm norm}(G)$ whose vertices are the element of the group $G$ and where $x$ and $y$ are adjacent in $\Gamma_{\rm norm}(G)$  if and only if either $x \to y$ or $y \to x$ in $\vec \Gamma_{\rm norm}(G)$. Recall that the supersolubility graph of a group $G$ is the undirected simple graph obtained taking as vertices all elements of $G$ and drawing an edge between two elements $x,y \in G$ if and only if the subgroup $\langle x, y \rangle$ is supersoluble.

\begin{pro}\label{prop:norm:supersolv}
Let $G$ be a group. Then $\Gamma_{\rm norm}(G)$ is a subgraph of the supersolubility graph of $G$.

\end{pro}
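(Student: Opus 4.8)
The plan is to show that every edge of $\Gamma_{\rm norm}(G)$ is also an edge of the supersolubility graph; since both graphs have the same vertex set $G$, this is exactly what it means for $\Gamma_{\rm norm}(G)$ to be a subgraph of it. So I would fix two distinct adjacent vertices $x,y$ of $\Gamma_{\rm norm}(G)$ and prove that the subgroup $\langle x, y \rangle$ is supersoluble.

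By definition, $x$ and $y$ being adjacent in $\Gamma_{\rm norm}(G)$ means that $x \to y$ or $y \to x$ in $\vec\Gamma_{\rm norm}(G)$; since the two cases are symmetric, I would assume without loss of generality that $x \to y$, i.e. $\langle x \rangle \unlhd \langle x, y \rangle$. Write $W = \langle x, y \rangle$. The key observation is that the quotient $W/\langle x \rangle$ is cyclic: indeed $W$ is generated by $x$ and $y$, so modulo $\langle x \rangle$ it is generated by the single coset $y\langle x \rangle$.

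I would then exhibit the normal series
\[
1 \;\unlhd\; \langle x \rangle \;\unlhd\; W,
\]
noting that each term is normal in $W$ --- the subgroup $\langle x \rangle$ by hypothesis, while $1$ and $W$ trivially --- and that both factors are cyclic: $\langle x \rangle / 1 \cong \langle x \rangle$ is cyclic by construction, and $W / \langle x \rangle$ is cyclic by the observation above. This is precisely a normal series with cyclic factors, so $W = \langle x, y \rangle$ is supersoluble. Hence $x$ and $y$ are joined by an edge in the supersolubility graph, which is what we wanted.

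I do not expect any real obstacle here: the statement reduces to the elementary fact that a group generated by an element together with a normal cyclic subgroup is cyclic-by-cyclic, and the only point requiring a little care is to verify that both proper members of the short series are normal in $W$ itself, so that it genuinely qualifies as a normal (and not merely subnormal) series witnessing supersolubility.
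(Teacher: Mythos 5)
Your proof is correct and follows essentially the same argument as the paper: reduce to the case $\langle x \rangle \unlhd \langle x, y \rangle$ and exhibit the normal series $1 \unlhd \langle x \rangle \unlhd \langle x, y \rangle$ with cyclic factors. Your extra remark that $\langle x, y \rangle / \langle x \rangle$ is cyclic because it is generated by the single coset $y\langle x \rangle$ just makes explicit a step the paper leaves implicit.
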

\begin{proof}
   If $x$ and $y$ are adjacent in $\Gamma_{\rm norm}(G)$ then $\langle x \rangle$ is normal in $\langle x, y \rangle$ or $\langle y \rangle$ is normal in $\langle x,y \rangle$. Without loss of generality suppose that $\langle x \rangle$ is normal in $\langle x, y \rangle$. Then $1 \leq \langle x \rangle \leq \langle x, y\rangle$ is a normal series with cyclic factors. Therefore $x$ and $y$ generate a supersoluble group and thus they are adjacent in the supersolubility graph of $G$.
\end{proof}

For a finite group $G$ it immediately follows from Proposition \ref{prop:norm:supersolv} and  \cite[Theorem 4.8]{CFH} that $G$ is supersoluble provided $\Gamma_{\rm norm}(G)$ is complete. Actually it is possible to say something more.

\begin{thm}\label{teo:norm:nilp}
    Let $G$ be a finite group with $\Gamma_{\rm norm}(G)$ complete. Then:
    \begin{itemize}
        \item[$(i)$] $G$ is nilpotent of nilpotency class at most $3$;
        \item[$(ii)$] all involutions of $G$ commute.
    \end{itemize}
\end{thm}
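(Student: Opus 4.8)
The plan is to first restate completeness of $\Gamma_{\rm norm}(G)$ in the convenient commutator form
\[
\text{for all } x,y\in G:\quad [x,y]\in\langle x\rangle\ \text{ or }\ [x,y]\in\langle y\rangle,
\]
since $\langle x\rangle\unlhd\langle x,y\rangle$ is equivalent to $x^y\in\langle x\rangle$, i.e.\ to $[x,y]\in\langle x\rangle$. I would dispose of part $(ii)$ first, as it is self-contained and its conclusion feeds the $2$-group analysis. Given involutions $s\ne t$, the subgroup $\langle s,t\rangle$ is dihedral of order $2m$ with $m=o(st)$, and $s,t$ are reflections. Completeness forces one of $\langle s\rangle,\langle t\rangle$ to be normal in $\langle s,t\rangle$; but conjugating a reflection by the rotation $st$ sends it to another reflection, and a direct computation shows that the cyclic subgroup generated by a reflection is normal in $D_{2m}$ only when $m\le 2$. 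Hence $o(st)\le 2$, so $st$ is an involution or trivial, and therefore $s$ and $t$ commute. In particular the involutions of $G$ together with $1$ form an elementary abelian subgroup.

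For part $(i)$ I would first prove nilpotency by showing that a $p$-element $u$ and a $q$-element $v$ with $p\ne q$ always commute; this makes the Sylow subgroups centralise one another, so $G=\prod_p P_p$. By completeness we may assume $v$ normalises $\langle u\rangle$. If the induced action were nontrivial, then (using that the part of $\Aut(\langle u\rangle)$ of order prime to $p$ embeds into the automorphism group of the order-$p$ characteristic subgroup $\langle u_0\rangle$, and that $\Aut(\Z/2^a)$ is a $2$-group) $v$ would act nontrivially on $\langle u_0\rangle$, and a suitable $q$-power $t$ of $v$ would act on $\langle u_0\rangle$ with order exactly $q$. Then $H=\langle u_0,t\rangle=\langle u_0\rangle\rtimes\langle t\rangle$ is a Frobenius-type group with $p$ Sylow $q$-subgroups, the two conjugates $t$ and $t^{u_0}$ already generate $H$ because $[t,u_0]$ generates $\langle u_0\rangle$, and neither $\langle t\rangle$ nor $\langle t^{u_0}\rangle$ is normal in $H$. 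Thus $t$ and $t^{u_0}$ are non-adjacent, contradicting completeness; hence $[u,v]=1$ and $G$ is nilpotent.

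It then suffices to bound the nilpotency class, and here the key identity is that $G$ is $3$-Engel, i.e.\ $[x,y,y,y]=1$ for all $x,y$. Indeed, if $[x,y]\in\langle y\rangle$ then already $[x,y,y]=1$; while if $[x,y]=x^m\in\langle x\rangle$ one computes $[x,y,y]=x^{m^2}$, and testing the pair $(y,y^x)$ against completeness — noting $y^x=yx^{-m}$ and $\langle y,y^x\rangle=\langle y,x^m\rangle$, so that normality of either cyclic factor forces $y^{x^m}=yx^{-m^2}\in\langle y\rangle$ — yields $x^{m^2}\in\langle x\rangle\cap\langle y\rangle$ and hence $[x,y,y,y]=[x^{m^2},y]=1$. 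Together with the disjunction $[x,y,x]=1$ or $[x,y,y]=1$ coming directly from completeness, this strongly constrains the lower central series.

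The main obstacle is the final step: upgrading these pointwise Engel identities to $\gamma_4(G)=1$, equivalently $\gamma_3(G)\le Z(G)$. The delicate point is precisely the degenerate configuration $[x,y,y]=x^{m^2}\ne 1$ with $x^{m^2}\in\langle x\rangle\cap\langle y\rangle$, which is where nilpotency class $3$ genuinely appears; one knows at once that $x^{m^2}$ centralises $\langle x,y\rangle$, and the real work is to show that such a $\gamma_3$-generator is central in all of $G$, testing completeness against an arbitrary further element $g$ according to whether $x\to g$ or $g\to x$. I expect this commutator bookkeeping, rather than nilpotency or part $(ii)$, to carry the weight of the argument; as a sanity check, Heineken's theorem already ensures that a finite $3$-Engel group is nilpotent, so only the improvement of the class bound down to $3$ must be extracted from the extra disjunctive hypothesis.
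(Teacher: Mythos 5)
Your part (ii) is correct, though more roundabout than necessary (the paper argues directly: completeness gives that one involution normalizes the cyclic group generated by the other, and normalizing a subgroup of order $2$ is the same as centralizing it, so no dihedral analysis is needed). The two ingredients you do establish for part (i) are also sound: the coprime-order argument gives nilpotency, and the computations $[x,y,y]=x^{m^2}$ and $x^{m^2}\in\langle x\rangle\cap\langle y\rangle$ via the pair $(y,y^x)$ are correct. But there is a genuine gap, and it sits exactly where you flag it: you never prove $\gamma_4(G)=1$. What you have at the end is a finite $3$-Engel group satisfying a disjunctive condition, and $3$-Engel is strictly too weak: Heineken's theorem gives class at most $4$, and that bound is attained by finite $3$-Engel groups (there are $2$-groups and $5$-groups of class exactly $4$), so the class-$3$ conclusion cannot be quoted from the Engel literature; it has to be extracted from completeness, which is precisely the ``commutator bookkeeping'' you defer as ``the real work''. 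As submitted, the main assertion of (i) is unproven.

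The idea you are missing is a second, sharper application of completeness: apply it not to the pair $(y,y^x)$ but to the pair $(xy,y)$, noting that $\langle xy,y\rangle=\langle x,y\rangle$. Assuming without loss of generality $\langle x\rangle\unlhd\langle x,y\rangle$, completeness at $(xy,y)$ gives that $\langle y\rangle$ or $\langle xy\rangle$ is also normal in $\langle x,y\rangle$; in either case $\langle x,y\rangle$ is a product of two cyclic subgroups, each normal in it, so its derived subgroup lies in the intersection of the two cyclic factors, and that intersection is central because each factor is abelian and the two factors generate. Hence every $2$-generator subgroup of $G$ is nilpotent of class at most $2$, i.e.\ $G$ is $2$-Engel, not merely $3$-Engel, and Levi's theorem \cite{Levi} then yields class at most $3$ at once: no separate nilpotency argument, no Heineken, and no residual case analysis. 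This is the paper's proof, and replacing your pair $(y,y^x)$ by $(xy,y)$ is what closes your gap.
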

\begin{proof}
We first prove $(i)$. Let $x,y \in G$. Since $\Gamma_{\rm norm}(G)$ is complete we have $\langle x \rangle \unlhd \langle x, y \rangle$ or $\langle y \rangle \unlhd \langle x, y \rangle$. Without loss of generality suppose $\langle x \rangle \unlhd \langle x, y \rangle$. Moreover, we have $\langle y \rangle \unlhd \langle xy, y \rangle$ or $\langle xy \rangle \unlhd \langle xy, y \rangle=\langle x, y \rangle $. Therefore, it follows that $\langle x, y \rangle=\langle x \rangle \langle y \rangle$ or $\langle x, y \rangle=\langle x \rangle \langle xy \rangle$. In either case,   $\langle x, y \rangle$ is a product of cyclic normal subgroups and thus it is nilpotent of class at most $2$. Thus applying the result in \cite{Levi} the proof is completed.

  We now prove $(ii)$. Let $g,h$ be two involutions of $G$. Since $\Gamma_{\rm norm}(G)$ is complete it follows that $g$ normalizes $\langle h \rangle$ or $h$ normalizes $\langle g \rangle$. Since both $h$ and $g$ have order $2$, we are done. 
\end{proof}

In general a nilpotent group of class $2$ does not have a complete normalizing graph, as the dihedral group of order $8$ shows.

We also point out that the completeness of $\Gamma_{\rm norm}(G)$ does not imply that  $\vec \Gamma_{\rm norm}(G)$ is complete. As a counterexample, one can take $G$ as in Example \ref{esempio}. In fact, $\Unip(G)$ has index $2$ in $G$ and so by Corollary \ref{cor:norm:completesenzafreccia} it follows that $\Gamma_{\rm norm}(G)$ is complete.

The following provides a sufficient condition for $\Gamma_{\rm norm}(G)$ to be complete.

\begin{lemma}\label{lem:norm:completeness}
    Let $G$ be a group and $x \in G\setminus \Unip(G)$. If for all $y \in G\setminus \Unip(G)$ there exists $c \in C_G(x)$ such that $c^{-1}y \in \Unip(G)$  then $\langle x \rangle$ is normal in $G$. 
\end{lemma}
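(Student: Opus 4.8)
The plan is to verify the defining condition of normality directly: for an arbitrary $g \in G$ I will show that $x^g \in \langle x \rangle$, and since $g$ is arbitrary this gives $\langle x \rangle \unlhd G$. The only structural input I need is the Baer-norm description of $\Unip(G)$ recorded in Proposition \ref{prop:norm:uni-uni+}$(ii)$, namely $\Unip(G) = \bigcap_{a \in G} N_G(\langle a \rangle)$. In particular, every element of $\Unip(G)$ normalizes $\langle x \rangle$, so conjugating $x$ by such an element sends it back into $\langle x \rangle$.

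First I would dispose of the easy case $g \in \Unip(G)$: here $g$ normalizes $\langle x \rangle$ by the description above, so $x^g \in \langle x \rangle$ immediately.

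The substantive case is $g \in G \setminus \Unip(G)$. Here I would apply the hypothesis with $y = g$ to obtain $c \in C_G(x)$ such that $u := c^{-1} g \in \Unip(G)$, so that $g = cu$ factors as a product of an element centralizing $x$ and an element of the Baer norm. Conjugating, I get $x^g = (x^c)^u = x^u$, where the first equality rewrites $g = cu$ and the second uses $c \in C_G(x)$, so that $x^c = x$. Since $u \in \Unip(G)$ normalizes $\langle x \rangle$, it follows that $x^u \in \langle x \rangle$, whence $x^g \in \langle x \rangle$.

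There is essentially no obstacle beyond bookkeeping: the entire argument rests on the observation that the centralizing factor $c$ is invisible under conjugation of $x$, while the remaining factor $u$ lies in $\Unip(G)$ and therefore normalizes every cyclic subgroup. The one point worth flagging is that the hypothesis only supplies the factorization $g = cu$ for $g$ lying outside $\Unip(G)$; this is precisely why the case $g \in \Unip(G)$ has to be treated separately, although there the conclusion is immediate.
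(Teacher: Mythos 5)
Your proof is correct and is essentially identical to the paper's: both split into the trivial case $g \in \Unip(G)$ and the case $g \notin \Unip(G)$, where the hypothesis gives the factorization $g = cu$ with $c$ centralizing $x$ and $u$ in the Baer norm, so that $x^g = x^u \in \langle x \rangle$. No differences worth noting.
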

\begin{proof}
   We prove that any $y \in G$ normalizes $\langle x \rangle$. This is obviously true if $y \in \Unip(G)$. Now assume $y \in G\setminus \Unip(G)$. By hypothesis, there exists $c \in C_G(x)$ such that $c^{-1}y\in \Unip(G)$. Thus  $x^y=x^{c^{-1}y} \in \langle x \rangle$. Therefore, $\langle x \rangle$ is normal in $G$.
\end{proof}

\begin{proposition} \label{prop:norm:completeness} 
    If $G$ is a group such that for any $x,y \in G\setminus \Unip(G)$ there exists $c \in C_G(x)$ such that $c^{-1}y \in \Unip(G)$  then $\Gamma_{\rm norm}(G)$ is complete. 
\end{proposition}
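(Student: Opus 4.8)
The plan is to reduce the statement entirely to the preceding Lemma~\ref{lem:norm:completeness} and then dispose of the two kinds of vertices separately. Recall that two distinct vertices $x,y$ are adjacent in $\Gamma_{\rm norm}(G)$ precisely when $\langle x \rangle \unlhd \langle x,y\rangle$ or $\langle y \rangle \unlhd \langle x,y \rangle$, so the goal is to verify that for every pair of distinct elements at least one of these two conditions holds.

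The first and decisive step is the observation that the hypothesis of the proposition, read for a single \emph{fixed} $x \in G\setminus\Unip(G)$, coincides exactly with the hypothesis of Lemma~\ref{lem:norm:completeness}: for all $y \in G\setminus\Unip(G)$ there exists $c \in C_G(x)$ with $c^{-1}y \in \Unip(G)$. Hence the lemma applies to each such $x$ and yields $\langle x \rangle \unlhd G$ for \emph{every} $x \in G\setminus\Unip(G)$. This is the entire algebraic content of the argument; everything else is a matter of unwinding definitions.

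With this in hand I would fix two distinct vertices $x,y \in G$ and argue by cases on whether $x \in \Unip(G)$. If $x \in \Unip(G)$, then by Proposition~\ref{prop:norm:uni-uni+}$(ii)$ the element $x$ lies in $\bigcap_{a\in G} N_G(\langle a\rangle)$, so $x$ normalizes $\langle y \rangle$; thus $\langle y \rangle \unlhd \langle x,y\rangle$, that is $y \to x$, and $x,y$ are adjacent. Otherwise $x \in G\setminus\Unip(G)$, and by the previous step $\langle x \rangle \unlhd G$, whence $\langle x \rangle \unlhd \langle x,y\rangle$ and $x \to y$; again $x,y$ are adjacent. Since in either case the vertices are adjacent, $\Gamma_{\rm norm}(G)$ is complete.

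I do not anticipate a genuine obstacle: the proposition is essentially a corollary of Lemma~\ref{lem:norm:completeness}, obtained by applying it to every $x$ outside $\Unip(G)$ simultaneously. The only points demanding care are bookkeeping ones, namely confirming that the global hypothesis ``for any $x,y$'' really does specialize, for each chosen $x$, to the per-$x$ hypothesis that the lemma requires, and invoking the correct characterization of $\Unip(G)$ as the set of forward-universal vertices so that membership $x\in\Unip(G)$ immediately furnishes the edge $y\to x$.
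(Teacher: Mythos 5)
Your proof is correct and follows essentially the same route as the paper's: handle vertices in $\Unip(G)$ via the characterization $\Unip(G)=\bigcap_{a\in G}N_G(\langle a\rangle)$, and reduce the remaining case to Lemma~\ref{lem:norm:completeness}, which under the hypothesis gives $\langle x\rangle\unlhd G$ for every $x\notin\Unip(G)$. The only (immaterial) difference is that you split cases on $x$ alone rather than on the pair, which makes the write-up marginally cleaner.
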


\begin{proof}
    Let $x,y \in G$. If either $x$ or $y$ belongs to $\Unip(G)$ then $x$ and $y$ are adjacent in $\Gamma_{\rm norm}(G)$. Otherwise apply Lemma \ref{lem:norm:completeness} to obtain the result.
\end{proof}

\begin{corollary}\label{cor:norm:completesenzafreccia}
    If $\Unip(G)$ has prime index in a group $G$ then $\Gamma_{\rm norm}(G)$ is complete.
\end{corollary}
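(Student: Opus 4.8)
The plan is to derive this directly from Proposition~\ref{prop:norm:completeness}, whose hypothesis I only need to verify in the present setting. Write $N=\Unip(G)$ and recall that $N$ is a subgroup of $G$ (it is the Baer norm), so the assumption that $N$ has prime index $p$ means precisely that the quotient $G/N$ is a cyclic group of order $p$. The whole proof will consist of checking that, for any pair $x,y\in G\setminus N$, there exists $c\in C_G(x)$ with $c^{-1}y\in N$; once this is established, Proposition~\ref{prop:norm:completeness} immediately yields that $\Gamma_{\rm norm}(G)$ is complete.

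First I would fix $x,y\in G\setminus N$ and pass to the quotient. Since $x\notin N$, the coset $xN$ is a nontrivial element of the group $G/N$, which is cyclic of prime order $p$; hence $xN$ generates $G/N$. As $yN$ is also a nontrivial element of this cyclic group, there is an integer $i$ with $1\le i\le p-1$ such that $yN=(xN)^{i}=x^{i}N$, equivalently $x^{-i}y\in N$.

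The key observation is then that $c:=x^{i}$ is a power of $x$ and therefore commutes with $x$, so $c\in C_G(x)$, while by the choice of $i$ we have $c^{-1}y=x^{-i}y\in N=\Unip(G)$. This exhibits the element $c$ required by the hypothesis of Proposition~\ref{prop:norm:completeness}, and since $x,y$ were arbitrary in $G\setminus\Unip(G)$, that proposition applies and gives the completeness of $\Gamma_{\rm norm}(G)$.

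There is no genuine obstacle here: the result is essentially a one-line consequence of Proposition~\ref{prop:norm:completeness}. The only points that must be made carefully are that $\Unip(G)$ really is a subgroup (so that ``prime index'' is meaningful and $G/\Unip(G)$ is a group), which was noted earlier in the excerpt, and the elementary but crucial fact that in a cyclic group of prime order every nontrivial element is a generator, which is exactly what lets us write $yN$ as a power of $xN$ and thereby produce a centralizing element $c\in C_G(x)$.
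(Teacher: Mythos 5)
Your proof is correct and follows exactly the paper's route: the paper likewise observes that any $x\in G\setminus\Unip(G)$ generates $G$ modulo $\Unip(G)$ (which is normal, being the Baer norm), so any $y\in G\setminus\Unip(G)$ is a power of $x$ modulo $\Unip(G)$, and then invokes Proposition~\ref{prop:norm:completeness}. You have merely spelled out the details (the element $c=x^{i}\in C_G(x)$ with $c^{-1}y\in\Unip(G)$) that the paper leaves implicit.
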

\begin{proof}
   Since every $x\in G\setminus \Unip(G)$ is a generator of $G$ modulo $\Unip(G)$, then any $y \in G\setminus\Unip(G)$ is a power of $x$ modulo $\Unip(G)$. Thus Proposition \ref{prop:norm:completeness} applies and the result follows.
\end{proof}

We point out that the hypothesis of Proposition \ref{prop:norm:completeness} implies that any element outside $\Unip(G)$ generates a normal subgroup of $G$ by Lemma \ref{lem:norm:completeness}. However the converse of Proposition \ref{prop:norm:completeness} does not hold. Indeed, if $G=$ SmallGroup$(64,28)$ then $\Gamma_{\rm norm}(G)$ is complete and  there exists $x \in G\setminus \Unip(G)$ such that $\langle x \rangle$ is not normal in $G$.

\section{Strong connectivity and diameter}
\noindent In this section we focus on the directed normalizing graph from which its universal vertices have been removed, i.e. $\vec\Delta_{\rm norm}(G)$.  In general, it could be difficult to predict whether this graph is strongly connected or not. For instance, if you consider the $2$-group $G$ as in Example  \ref{esempio}, then 
$\vec\Delta_{\rm norm}(G)$ is strongly disconnected. Thus, we firstly focus on decomposable groups, establishing some conditions that ensure a strong connectivity of $\vec \Delta_{\rm norm}(G)$ and give us a bound on its diameter. Later, we focus on groups with trivial center, since, due to Corollary \ref{cor:norm:A}, %\ref{thm:norm:carattuniv}
 for this class of groups $G$ we have $\Univ(G)=\{1\}$ and thus, the vertex set of $\vec\Delta_{\rm norm}(G)$ is $G\setminus \{1\}$. In this case, for soluble groups, we characterize when $\vec\Delta_{\rm norm}(G)$ is strongly connected and establish some bounds on its diameter.

\begin{lemma}\label{lem:3steps}
    Let $G=H\times K$ be a direct product of two non-Dedekind groups. If $(a,b) \in G\smallsetminus \Univ(G)$ and either  $(a,1) \not \in \Univ(G)$ or  $(1,b) \not \in \Univ(G)$ then for every $(c,d) \in G\smallsetminus \Univ(G)$ there is a directed path connecting $(a,b)$ to $(c,d)$ in at most 3 steps.
\end{lemma}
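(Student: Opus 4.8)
The plan is to produce, under the stated hypothesis, an explicit directed path of length at most three of the shape
\[
(a,b)\ \to\ (a,1)\ \to\ (1,y)\ \to\ (c,d),
\]
where $y\in K$ is chosen depending on the target $(c,d)$. By the symmetry $H\leftrightarrow K$ I may assume $(a,1)\notin\Univ(G)$; the case $(1,b)\notin\Univ(G)$ is treated identically, with the roles of the two factors (and of $c$ and $d$) interchanged, routing instead through a middle vertex $(x,1)$.

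First I would record the two easy arrows. Since $(a,1)$ commutes with $(a,b)$, it centralises, hence normalises, $\langle(a,b)\rangle$, so $(a,b)\to(a,1)$; moreover $(a,1)$ is a genuine vertex of $\vec\Delta_{\rm norm}(G)$ because $(a,1)\notin\Univ(G)$ by hypothesis (note $a\neq1$, since otherwise $(a,1)=(1,1)\in\Univ(G)$). Likewise, for \emph{any} $y\in K$ the element $(1,y)$ commutes with $(a,1)$, whence $(a,1)\to(1,y)$. Thus the first two arrows are available for free and for every choice of $y$. The last arrow $(1,y)\to(c,d)$ amounts, after conjugating $(1,y)$ by $(c,d)$, to the single requirement $d\in N_K(\langle y\rangle)$, that is, that the target's $K$-coordinate normalise $\langle y\rangle$.

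It therefore remains only to choose $y\in K$ so that simultaneously $d\in N_K(\langle y\rangle)$ and $(1,y)$ is a vertex, i.e. $(1,y)\notin\Univ(G)$. Here I would invoke Proposition~\ref{prop:norm:univ}$(iii)$: since $\Univ(G)\subseteq\Univ(H)\times\Univ(K)$, it suffices to pick $y\notin\Univ(K)$ in order to guarantee $(1,y)\notin\Univ(G)$. I then split into two cases. If $d\notin\Univ(K)$, take $y=d$: then $d\in N_K(\langle d\rangle)$ trivially and $(1,d)\notin\Univ(G)$. If instead $d\in\Univ(K)$, then in particular $d\in\Unip(K)=\bigcap_{a\in K}N_K(\langle a\rangle)$, so $d$ normalises every cyclic subgroup of $K$; since $K$ is non-Dedekind, $\Univ(K)\neq K$ by Theorem~\ref{thm:normcompl}, and any $y\in K\setminus\Univ(K)$ then works. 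In both cases the required $y$ exists, and the three arrows yield a path of length at most three.

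The main obstacle, and the reason the argument needs a genuine idea rather than a single fixed intermediate, is precisely this last case: the naive choice $y=d$ can land on a universal vertex $(1,d)$, which has been deleted from $\vec\Delta_{\rm norm}(G)$, and moreover a factor such as $A_4$ possesses no nontrivial element generating a normal cyclic subgroup, so one cannot route through a global ``source'' lying in $\Unim(G)\setminus\Univ(G)$. The resolution is to let the middle vertex depend on the target and to exploit that an element $d\in\Univ(K)$ lies in the Baer norm $\Unip(K)$, hence normalises every cyclic subgroup of $K$, which frees us to replace $d$ by an arbitrary non-universal $y$. Beyond this, I would only check the routine points that consecutive vertices are distinct (otherwise the path is simply shorter) and that all intermediate vertices lie outside $\Univ(G)$.
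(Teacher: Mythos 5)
Your proof is correct and follows essentially the same route as the paper's: the path $(a,b)\to(a,1)\to(1,y)\to(c,d)$ built from commuting arrows, with $y=d$ when that middle vertex is non-universal, and otherwise an arbitrary $y\in K\setminus\Univ(K)$ (which exists by Theorem~\ref{thm:normcompl}) together with the Baer-norm property of the universal element to secure the last arrow. The only cosmetic difference is that your case split on $d\in\Univ(K)$ absorbs uniformly the paper's sub-case where both $(c,1)$ and $(1,d)$ are universal, in which the paper instead invokes Lemma~\ref{lem:norm:univ-}$(i)$ to obtain a direct arrow $(a,b)\to(c,d)$.
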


\begin{proof}
If $(1,d) \not \in \Univ(G)$ then  we can consider the path $(a,b) \rightarrow (a,1) \rightarrow (1,d) \rightarrow (c,d)$. If  $(1,d)  \in \Univ(G)$ and $(c,1) \in \Univ(G)$ then by $(i)$ of Lemma \ref{lem:norm:univ-} we have $(c,d) \in \Unip(G)$ and thus $(a,b)\to (c,d)$. Finally, suppose $(1,d) \in \Univ(G)$ and $(c,1) \not \in \Univ(G)$. Since $K$ is not a Dedekind group by Theorem \ref{thm:normcompl} there exists $x \in K \smallsetminus \Univ(K)$. Since $(1,d) \in \Univ(G)$ then $(1,x)$ is normalized by $(c,d)$. Thus, we can consider the path $(a,b) \rightarrow (a,1) \rightarrow (1,x) \rightarrow (c,d)$.
\end{proof}

We are now ready to prove Theorem A.

\begin{proof}[Proof of Theorem A]
From $H$ and $K$ being non-Dedekind groups it follows that $G$ is not a Dedekind group. Let $(a,b), (c,d) \in G\smallsetminus \Univ(G)$. 

First assume $(i)$. Since $(a,b) \not\in \Univ(G)$, we have $(a,1) \not \in \Univ(G)$ or $(1,b) \not \in \Univ(G)$. Thus Lemma \ref{lem:3steps} gives the result. 

Now suppose condition $(ii)$ or $(iii)$ holds. By Lemma \ref{lem:3steps} we can assume that $(a,1), (1,b) \in \Univ(G)$. If both $(c,1)$ and $(1,d)$ are elements of $\Univ(G)$, then by $(i)$ of Lemma \ref{lem:norm:univ-} we have $(a,b) \to (c,d)$.
Without loss of generality assume that $(c,1)\not \in \Univ(G)$. Since $K$ is a non-Dedekind group,  by Theorem \ref{thm:normcompl} there exists $y \in K$ such that  $(1,y)\in G \setminus \Univ(G)$.
If $(ii)$ holds, then $(a,1)\in \Univ(G)$ implies that $a \in Z(H)$. Therefore $(a,b) \rightarrow (c,1)  \rightarrow (c,d)$.
If $(iii)$ holds, then $(1,b)\in \Univ(G)$  implies that $b \in Z(K)$. Therefore $(a,b) \rightarrow (1,y) \rightarrow (c,1) \rightarrow (c,d)$.  

\end{proof}

The hypothesis on $H$ and $K$ being non-Dedekind is necessary in Theorem A. 

\begin{example}
    Let $G=C_3 \times S_3$. Then
\[
\Univ(G)=Z(G)=Z(C_3)\times Z(S_3)=C_3\times\{1\}=\Univ(C_3)\times \Univ(S_3).
\]
Then let $g=(x,(12))$, with $x \in C_3$. Let $h \in G$ such that $g \to h$. Then $g^h=g$ and so $h=(y, (12))$ or $h=(y,1)$, with $y \in C_3$. However, $(y,1)\in \Univ(G)$ and thus it is not a vertex of $\vec\Delta_{\rm norm}(G)$. Therefore $h=(y, (12))$ and this proves that the set $\{(x,(12)) | x \in C_3\}$ is a sink and thus the graph $\vec\Delta_{\rm norm}(G)$ is strongly disconnected.
\end{example}
 It is also possible to find examples where $H$ and $K$ are both non-abelian. Indeed, if you consider $G=Q_8 \times D_{10}$, 

 it follows that $\vec\Delta_{\rm norm}(G)$ is strongly disconnected and $\Univ(G)=Q_8\times\{1\}=\Univ(Q_8)\times \Univ(D_{10})$.

Moreover, the bound in Theorem A is the best possible, as the following example shows.
\begin{example}\label{norm:esempio:2}
    Let $G=S_3\times S_3$. Notice first that $\Univ(S_3)=Z(S_3)=\{1\}$, thus by $(v)$ of Proposition \ref{prop:norm:univ} we have $\Univ(G)=\{1\}=\Univ(S_3)\times \Univ(S_3) $.  Consider $x_1=((12), (23)) \in G$ and $ x_2=((23),(12))\in G$.  Every $x_i$ has order $2$, thus $N_G(\langle x_i \rangle)=C_G(x_i)$. Notice first that $x_1$ and $x_2$ are not adjacent. Moreover, suppose $h=(a,b) \in G \setminus \{x_1\}$ such that $x_1 \rightarrow h$. Then $(12)^a=(12)$ and $(23)^b=(23)$, thus $a\in \{1, (12)\}$ and $b\in \{1, (23)\}$. Therefore, $h=((12), 1)$ or $h=(1, (23))$. In both cases, we do not have $h \to x_2$ and therefore the diameter of $\vec\Delta_{\rm norm}(G)$ is $3$.
\end{example}

Notice that in general $Z(G)=Z(H)\times Z(K) \neq \Univ(G) \neq \Univ(H)\times \Univ(K) $. In fact, let $H$ be the group in Example \ref{esempio} and $K=\langle z \rangle \rtimes Q_8$, $z^3=1$, $z^i=z^2$ and $z^j=z^2$. Then $G=H\times K$ has center of order $8$, while $|\Univ(G)|=12$ and $|\Univ(H)\times \Univ(K)|=24$.

We can say more in the case $K=H$.

\begin{proposition}\label{prop:norm:HxH}
   Let $G=H\times H$, where $H$ is a non-abelian group. Then  $\vec\Delta_{\rm norm}(G)$ is strongly connected of diameter at most $3$.
   Moreover, $\Univ(G)=Z(G)$.
\end{proposition}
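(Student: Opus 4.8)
The plan is to establish the identity $\Univ(G)=Z(G)$ first, and then to read off strong connectivity and the diameter bound from Theorem~A. For the identity, the inclusion $Z(G)\subseteq\Univ(G)$ is Proposition~\ref{prop:norm:ZUF}, so I only need $\Univ(G)\subseteq Z(G)$. Let $(a,b)\in\Univ(G)$. By Proposition~\ref{prop:norm:univ}$(iii)$ both $a,b\in\Univ(H)$; in particular $\langle a\rangle,\langle b\rangle\unlhd H$, and since $\Univ(H)\subseteq Z_2(H)$ by Proposition~\ref{prop:norm:ZUF}, conjugation by $a$ (resp.\ $b$) is a power automorphism whose commutator maps $w\mapsto[w,a]$ and $w\mapsto[w,b]$ are homomorphisms $H\to Z(H)$, with images the central subgroups $[H,a]$ and $[H,b]$. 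The first key step is a diagonal trick: since $(a,b)$ normalizes $\langle(y,y)\rangle$ for every $y\in H$, writing $y^a=y^n$ and $y^b=y^{n'}$ forces $(y^n,y^{n'})\in\langle(y,y)\rangle$, i.e.\ $y^a=y^b$. Hence $[w,a]=[w,b]$ for all $w$, so the two central subgroups coincide: $[H,a]=[H,b]=:R$.

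The second key step combines the $\Unim(G)$ part of universality with normality. Conjugating $(a,b)$ by $(c,1)$ gives $a^c=a^{k}$ with $b=b^{k}$, so $k\equiv1\pmod{o(b)}$ and thus $[a,c]=a^{k-1}\in\langle a^{\delta}\rangle$, where $\delta=\gcd(o(a),o(b))$; letting $c$ range over $H$ yields $R=[H,a]\subseteq\langle a^{\delta}\rangle$. Symmetrically, conjugating by $(1,d)$ gives $R=[H,b]\subseteq\langle b^{\delta}\rangle$. Now fix a prime $p$ and put $\alpha=v_p(o(a))$, $\beta=v_p(o(b))$: the $p$-part of $\langle a^{\delta}\rangle$ has order $p^{\alpha-\min(\alpha,\beta)}$ and that of $\langle b^{\delta}\rangle$ has order $p^{\beta-\min(\alpha,\beta)}$, and one of these exponents is $0$. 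As $R_p$ embeds in both, $R_p=1$; since $p$ is arbitrary, $R=[H,a]=1$, i.e.\ $a\in Z(H)$. The same argument gives $b\in Z(H)$, so $(a,b)\in Z(H)\times Z(H)=Z(G)$, proving $\Univ(G)=Z(G)$.

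With $\Univ(G)=Z(G)=Z(H)\times Z(H)$ in hand, the condition ``$(h,1)\in\Univ(G)\iff h\in Z(H)$'' of Theorem~A$(ii)$ holds automatically. When $H$ is non-Dedekind, so is $K=H$, and Theorem~A applies directly to give that $\vec\Delta_{\rm norm}(G)$ is strongly connected of diameter at most $3$. When $H$ is a non-abelian Dedekind group Theorem~A is not literally applicable, but since $H$ is non-abelian we still have $\Univ(G)=Z(G)\subsetneq G$, so for every $y\in H\setminus Z(H)$ the vertices $(y,1)$ and $(1,y)$ are non-universal; feeding these into the explicit path construction of Lemma~\ref{lem:3steps}, in place of the vertices supplied there by non-Dedekindness, produces directed paths of length at most $3$ between any two vertices, giving the same conclusion.

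The step I expect to be the crux is the interplay in the second paragraph. Neither the diagonal identity $[H,a]=[H,b]$ alone nor the containments $[H,a]\subseteq\langle a^{\delta}\rangle$ alone suffices, and a direct attempt to show that a non-central element of $\Univ(H)$ cannot act as a power automorphism on $H\times H$ runs into genuine non-central \emph{universal} power automorphisms (they exist already in suitable $p$-groups of order $p^{4}$), so that line of attack stalls. It is precisely the squeezing of the common value $R$ into $\langle a^{\delta}\rangle\cap\langle b^{\delta}\rangle$, which is $p$-trivial for every $p$, that forces centrality. A secondary point needing care is the non-abelian Dedekind case of the connectivity statement, where one must check that the short paths of Lemma~\ref{lem:3steps} survive even though the factor $H$ contributes no non-universal vertices of its own.
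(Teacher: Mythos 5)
Your proposal is correct, and it splits naturally into two halves that compare differently with the paper. For the connectivity half you follow essentially the paper's route: Theorem~A when $H$ is non-Dedekind, and a re-run of the path construction of Lemma~\ref{lem:3steps} when $H$ is a non-abelian Dedekind group. Your patch of that lemma is sound: once $\Univ(G)=Z(G)$ is known, the hypothesis ``$(a,1)\notin\Univ(G)$ or $(1,b)\notin\Univ(G)$'' holds automatically, and non-Dedekindness enters the lemma's proof only to supply a non-universal vertex of the form $(1,x)$, which you replace by taking $x\in H\setminus Z(H)$; this is exactly how the paper itself handles that case. The half where you genuinely diverge is $\Univ(G)\subseteq Z(G)$, and your version is in fact tighter than the paper's. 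The paper proves connectivity first (splitting on $\Univ(H)=Z(H)$ versus $\Univ(H)\neq Z(H)$, using Proposition~\ref{prop:norm:univ}$(v)$ and Lemma~\ref{lem:norm:univinH} to verify Theorem~A's hypotheses), and only then proves $\Univ(G)=Z(G)$: after the same diagonal trick it writes $b=az$ with $z\in Z(H)$, conjugates by $(h,1)$ to get $(az)^m=az$ and $a^h=a^m$, and then asserts $m\equiv 1\pmod{\mathrm{lcm}(o(a),o(z))}$, hence $a^h=a$. That congruence is not justified as written: $(az)^m=az$ only gives $m\equiv 1\pmod{o(az)}$, and $o(az)$ can be a proper divisor of $\mathrm{lcm}(o(a),o(z))$ when $\langle a\rangle\cap\langle z\rangle\neq 1$ (e.g.\ $o(a)=o(z)=4$, $z^2=a^2$ gives $o(az)\leq 2$). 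Your squeeze $[H,a]=[H,b]\subseteq\langle a^{\delta}\rangle\cap\langle b^{\delta}\rangle$ with $\delta=\gcd(o(a),o(b))$, killed prime by prime, closes precisely this loophole; and proving $\Univ(G)=Z(G)$ first buys you a cleaner connectivity argument, since condition $(ii)$ of Theorem~A becomes immediate and Lemma~\ref{lem:norm:univinH} is no longer needed. One caveat you share with the paper: the proposition is stated for arbitrary non-abelian $H$, while both your valuation argument and the paper's order computations presuppose that $a$ and $b$ have finite order; in the infinite-order case the relevant congruence $b^k=b$ (resp.\ $a^k=a$) forces $k=1$, and centrality drops out at once, so the patch is trivial but worth recording.
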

\begin{proof}
   If $\Univ(H)=Z(H)$ then from Theorem \ref{thm:normcompl} it follows that $H$ is not a Dedekind group. Moreover, by $(v)$ of Proposition \ref{prop:norm:univ} we have $\Univ(G) = \Univ(H)\times \Univ(H)$. Thus, $G$ satisfies condition $(i)$ of Theorem A, and we are done.
   
   Suppose now  $\Univ(H) \neq Z(H)$.
   Then condition $(ii)$ of Theorem A is satisfied by Lemma \ref{lem:norm:univinH}. 
   If $H$ is not a Dedekind group then the result follows by Theorem A.
   Suppose now $H$ is a Dedekind group. Let $(a,b), (c,d) \in G \setminus \Univ(G)$. Then, using again condition $(ii)$ of Theorem A, we can assume $(a,1) \not \in \Univ(G)$. If $(1,d) \not \in \Univ(G)$ then $(a,b) \leftrightarrow (a,1)\leftrightarrow (1,d) \leftrightarrow (c,d) $ is a path of length at most $3$. If $(1,d)\in \Univ(G)$ then $d \in Z(H)$ and for any $y \in H \setminus Z(H)$ we have $(1,y) \not\in \Univ(G)$ and $(a,b) \leftrightarrow (a,1) \leftrightarrow (1,y) \rightarrow (c,d)$ is the path that connects $(a,b)$ to $(c,d)$ in at most $3$ steps.

Finally,  we prove that $\Univ(G)= Z(G)$. By Proposition \ref{prop:norm:ZUF} one inclusion is obvious. 
Let $(a,b) \in \Univ(G)$. Let $h \in H$. Then $(h,h)^{(a,b)} \in \langle (h,h) \rangle$. Thus, we have $h^a=h^b$. Since $ab^{-1}$ centralizes every $h \in H$, there exists $z \in Z(H)$ such that $b=az$. 
On the other hand, $(a,b) \in \Univ(G)$ implies that there exists an integer $m$ such that $(a,b)^{(h,1)}=(a^m,b^m)=(a^m,(az)^m)$ and $(a,b)^{(h,1)}=(a^h,b)=(a^m,az)$. Thus, $(a^m,(az)^m)=(a^m,az)$. Therefore, $m \equiv 1 \pmod {\mathrm{mcm} (o(a), o(z))}$ and so $m \equiv 1 \pmod{o(a)}$. From this it follows that 
 $a^h=a^m=a$. Thus, $a \in Z(H)$. Since $b=az$, it follows that $b \in Z(H)$ and, therefore, that $(a,b) \in Z(G)$.
\end{proof}

The bound in Proposition \ref{prop:norm:HxH} is the best possible, as Example \ref{norm:esempio:2} shows.

We investigate now the strong connectivity of $\vec\Delta_{\rm norm}(G)$ for a group $G$ with trivial center. Before looking in depth into the main results we first focus on the relationship between the directed normalizing graph of a group and the directed normalizing graph of quotients.

Let $G$ be a group and $N$ a normal subgroup of $G$. Obviously if we take two elements $x, y \in G$ such that $ x^{-1}y \in G\setminus N$ and $x \to y$ in $\vec\Gamma_{\rm norm}(G)$ then $xN \to yN$ in $\vec \Gamma_{\rm norm}(G/N)$. It could be useful to find conditions under which a connection between two elements in $\vec \Gamma_{\rm norm}(G/N)$ gives a connection in $\vec \Gamma_{\rm norm}(G)$.

\begin{proposition}
    Let $G$ be a group and $N$ a normal subgroup of $G$. If $xN \rightarrow yN$  in $\vec \Gamma_{\rm norm}(G/N)$ and $\langle x, y \rangle \cap N =\{1\}$ then $x \rightarrow y$ in $\vec\Gamma_{\rm norm}(G)$.
\end{proposition}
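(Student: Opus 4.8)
The plan is to reduce everything to the single congruence encoding the quotient hypothesis and then exploit the intersection condition to upgrade it to an exact identity. First I would recall that, by Proposition \ref{prop:norm:vicinato}$(i)$, the arc $x \to y$ in $\vec\Gamma_{\rm norm}(G)$ holds precisely when $y$ normalizes $\langle x \rangle$, equivalently when $x^y \in \langle x \rangle$; the same criterion applied in the quotient says that $xN \to yN$ in $\vec\Gamma_{\rm norm}(G/N)$ holds exactly when $(xN)^{yN} \in \langle xN \rangle$. Since conjugation commutes with the canonical projection we have $(xN)^{yN} = (x^y)N$, and $\langle xN \rangle = \langle x \rangle N/N$, so the hypothesis $xN \to yN$ translates into the existence of an integer $k$ with $x^y \in x^k N$, that is, $x^{-k} x^y \in N$.

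Next I would observe that the element $w := x^{-k} x^y = x^{-k} y^{-1} x y$ is a word in $x$ and $y$, hence lies in $\langle x, y \rangle$, while the previous step shows $w \in N$. Therefore $w \in \langle x, y \rangle \cap N$, which is trivial by hypothesis, forcing $w = 1$ and hence $x^y = x^k \in \langle x \rangle$.

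Finally, since $x$ obviously normalizes $\langle x \rangle$ and we have just shown that $y$ does as well, the whole group $\langle x, y \rangle$ normalizes $\langle x \rangle$; thus $\langle x \rangle \unlhd \langle x, y \rangle$, which is exactly the assertion $x \to y$ in $\vec\Gamma_{\rm norm}(G)$. There is no genuine obstacle here: the only point requiring care is the correct translation of the quotient adjacency into a congruence modulo $N$ -- in particular remembering that $\langle xN \rangle$ consists of the classes $x^i N$ -- after which the intersection hypothesis does all the work, collapsing the coset ambiguity $x^{-k}x^y \in N$ to the exact relation $x^y = x^k$.
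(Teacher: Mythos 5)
Your proof is correct and follows essentially the same route as the paper: translate the quotient adjacency $xN \to yN$ into $x^y = x^k n$ with $n \in N$, observe that $n = x^{-k}x^y \in \langle x, y\rangle \cap N = \{1\}$, and conclude $x^y = x^k$, whence $\langle x \rangle \unlhd \langle x, y \rangle$. The only difference is cosmetic: you spell out the final step that $y$ (and trivially $x$) normalizing $\langle x \rangle$ yields normality in $\langle x, y \rangle$, which the paper leaves implicit with ``the result follows.''
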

\begin{proof}
   Assume that $yN$ normalizes $\langle xN \rangle$ in $G/N$. This means that $(xN)^{yN}=x^mN$ for some integer $m$. Thus $y^{-1}xy=x^mn$, and so $n=y^{-1}xyx^{-m} \in \langle x, y \rangle \cap N$. Therefore $n=1$ and the result follows.
\end{proof}

We start our investigation on connectivity of $\vec\Delta_{\rm norm}(G)$ by pointing out some connections with other graphs. Recall that the \textit{nilpotent graph} of a group $G$ is the undirected simple graph whose vertices are the elements of $G\setminus Z_{\infty}(G)$, where $Z_{\infty}(G)$ denotes the hypercenter of $G$, and there is an edge between two vertices $x$ and $y$ if and only if $\langle x,y \rangle$ is nilpotent. We will denote it with $\Delta_{\rm nil}(G)$.

\begin{theorem}\label{thm:nil-norm}
Let $G$ be a finite group with trivial center. If $\Delta_{\rm nil}(G)$ is connected then $\vec\Delta_{\rm norm}(G)$ is strongly connected. 
\end{theorem}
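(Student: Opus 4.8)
The plan is to show that every edge of $\Delta_{\rm nil}(G)$ lifts to a pair of short directed paths, one in each direction, in $\vec\Delta_{\rm norm}(G)$, and then to propagate this along an undirected path in $\Delta_{\rm nil}(G)$. First I would reconcile the two vertex sets. Since $G$ is finite with $Z(G)=\{1\}$, the upper central series is already stationary at the identity (if $Z_1(G)=\{1\}$ then $Z_{i}(G)=\{1\}$ for all $i$), so the hypercenter satisfies $Z_\infty(G)=\{1\}$; hence the vertex set of $\Delta_{\rm nil}(G)$ is $G\setminus\{1\}$. On the other hand, Corollary \ref{cor:norm:A} gives $\Univ(G)=\{1\}$, so the vertex set of $\vec\Delta_{\rm norm}(G)$ is also $G\setminus\{1\}$. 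Thus the two graphs have exactly the same vertices, and it suffices to realise reachability.

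The key step is the following local lemma: if $x,y\in G\setminus\{1\}$ are adjacent in $\Delta_{\rm nil}(G)$, then there are directed paths from $x$ to $y$ and from $y$ to $x$ in $\vec\Delta_{\rm norm}(G)$, each of length at most $2$. To prove it, set $N=\langle x,y\rangle$; by hypothesis $N$ is nilpotent, and it is nontrivial, so $Z(N)\neq\{1\}$. If $x\in Z(N)$ or $y\in Z(N)$, then $x$ and $y$ commute, so $N=\langle x,y\rangle$ is abelian; all its cyclic subgroups are normal, and therefore $x\leftrightarrow y$ directly, giving a path of length $1$ in each direction. If instead neither $x$ nor $y$ lies in $Z(N)$, then any $c\in Z(N)\setminus\{1\}$ is automatically distinct from both $x$ and $y$. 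Being central in $N$, the element $c$ commutes with $x$ and with $y$, so $\langle c,x\rangle$ and $\langle c,y\rangle$ are abelian; hence $x\leftrightarrow c$ and $c\leftrightarrow y$, and $x\to c\to y$ together with $y\to c\to x$ are the desired directed paths of length $2$. In either case the intermediate vertex is nontrivial, hence a genuine vertex of $\vec\Delta_{\rm norm}(G)$.

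With the lemma in hand, the conclusion follows immediately. Since $\Delta_{\rm nil}(G)$ is connected, any two vertices $u,v$ are joined by an undirected path $u=w_0\sim w_1\sim\cdots\sim w_m=v$. Replacing each edge $w_i\sim w_{i+1}$ by the forward directed path supplied by the lemma yields a directed path from $u$ to $v$, and replacing it by the backward directed path yields one from $v$ to $u$. Hence every ordered pair of vertices is connected by a directed path, and $\vec\Delta_{\rm norm}(G)$ is strongly connected.

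The argument is short once one observes that nilpotency of $\langle x,y\rangle$ is exploited only through the nontriviality of $Z(\langle x,y\rangle)$, routing between the two endpoints through a central element that commutes with both. The only delicate point, and thus the real content of the local lemma, is guaranteeing that this routing vertex is nontrivial and distinct from $x$ and $y$; this is precisely what forces the case distinction according to whether $x$ or $y$ already lies in $Z(\langle x,y\rangle)$. I do not anticipate any further obstacle, and as a byproduct the proof shows that directed distances can be controlled by (twice) the corresponding distances in $\Delta_{\rm nil}(G)$.
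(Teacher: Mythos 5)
Your proof is correct and takes essentially the same route as the paper: both arguments pick a nontrivial $z \in Z(\langle x,y\rangle)$ (nontrivial because $\langle x,y\rangle$ is nilpotent), observe that $z$ commutes with both endpoints to get $x \leftrightarrow z \leftrightarrow y$, and then propagate along an undirected path in $\Delta_{\rm nil}(G)$. Your explicit reconciliation of the two vertex sets and the case split ensuring the routing vertex is distinct from $x$ and $y$ are minor tidiness additions to what is otherwise the paper's own argument.
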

\begin{proof}
    Let $x, y \in G \setminus \{1\}$ such that $x$ and $y$ are adjacent in $\Delta_{\rm nil}(G)$. Then $\langle x, y \rangle $ is nilpotent, and thus its center is nontrivial. Consider $z \in Z(\langle x,y \rangle)\setminus \{1\}$. Then $z$ commutes both with $x$ and $y$ and we have $x \leftrightarrow z \leftrightarrow y$. It follows that $\vec\Delta_{\rm norm}(G)$ is strongly connected.
\end{proof}

\begin{corollary}\label{cor:norm}
    Let $G$ be a finite soluble group with trivial center. If $\vec\Delta_{\rm norm}(G)$ is strongly disconnected, then $G$ is a Frobenius group or a $2$-Frobenius group.
\end{corollary}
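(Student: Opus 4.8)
The plan is to deduce the statement from Theorem~\ref{thm:nil-norm} by contraposition, trading the directed normalizing graph for the (undirected) nilpotent graph. First I would record that the two graphs live on the same vertex set: since $Z(G)=\{1\}$, the upper central series is already stationary at the identity, so the hypercenter $Z_{\infty}(G)$ is trivial and the vertex set of $\Delta_{\rm nil}(G)$ is $G\setminus\{1\}$; by Corollary~\ref{cor:norm:A} the same holds for $\vec\Delta_{\rm norm}(G)$, since $\Univ(G)=\{1\}$. Now Theorem~\ref{thm:nil-norm} asserts that connectivity of $\Delta_{\rm nil}(G)$ forces strong connectivity of $\vec\Delta_{\rm norm}(G)$; its contrapositive says that if $\vec\Delta_{\rm norm}(G)$ is strongly disconnected, then $\Delta_{\rm nil}(G)$ is disconnected. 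This is the only place where the hypothesis on $\vec\Delta_{\rm norm}(G)$ is used.

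It then remains to show that a finite soluble group with trivial center whose nilpotent graph is disconnected must be a Frobenius or a $2$-Frobenius group. The route I would take is to pass from the nilpotent graph to the Gruenberg--Kegel (prime) graph $\Gamma_{GK}(G)$, whose vertices are the primes dividing $|G|$ and whose edges join $p$ and $q$ exactly when $G$ has an element of order $pq$. The elementary observations are: \emph{(a)} every $g\neq 1$ is adjacent in $\Delta_{\rm nil}(G)$ to each of its prime-order powers $g_{[p]}$, because $\langle g,g_{[p]}\rangle=\langle g\rangle$ is cyclic, hence nilpotent; and \emph{(b)} an element of order $pq$ provides a $\Delta_{\rm nil}$-edge between an element of order $p$ and an element of order $q$, as both lie in the cyclic group it generates. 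Using these, I would argue that connectivity of $\Gamma_{GK}(G)$ propagates to connectivity of $\Delta_{\rm nil}(G)$, so that disconnectedness of $\Delta_{\rm nil}(G)$ forces disconnectedness of $\Gamma_{GK}(G)$. Finally, the soluble case of the Gruenberg--Kegel theorem states that a finite soluble group with disconnected prime graph is either a Frobenius group or a $2$-Frobenius group, which is exactly the desired conclusion.

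The main obstacle is the propagation step, namely transferring connectivity from $\Gamma_{GK}(G)$ to $\Delta_{\rm nil}(G)$. Observations \emph{(a)} and \emph{(b)} reduce everything to showing that any two elements of the same prime order $p$ lie in one connected component of $\Delta_{\rm nil}(G)$; this \emph{same-prime} connectivity is the delicate point, and it is precisely here that solubility and the triviality of the center are needed to control the Sylow $p$-subgroups and their centers. I would either isolate this as a separate lemma or, more economically, invoke directly the known classification of finite soluble groups with trivial center and disconnected nilpotent graph; in either formulation the conclusion is that $G$ is Frobenius or $2$-Frobenius, and the corollary follows.
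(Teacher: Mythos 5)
Your first step is exactly the paper's: by the contrapositive of Theorem~\ref{thm:nil-norm} (whose hypotheses apply because, as you note, $Z(G)=\{1\}$ forces $Z_\infty(G)=\{1\}$ and, by Corollary~\ref{cor:norm:A}, $\Univ(G)=\{1\}$, so both graphs have vertex set $G\setminus\{1\}$), strong disconnectedness of $\vec\Delta_{\rm norm}(G)$ yields that $\Delta_{\rm nil}(G)$ is disconnected. For the second step the paper does precisely what you describe as your ``more economical'' fallback: it cites Theorem~A of \cite{DGLM}, the classification of finite soluble groups with trivial center whose nilpotent graph is disconnected, to conclude that $G$ is Frobenius or $2$-Frobenius. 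In that formulation your proposal coincides with the paper's proof and is correct.

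Your primary route, through the Gruenberg--Kegel graph $\Gamma_{GK}(G)$, has a genuine gap, and it sits exactly where you flagged it. The ``same-prime connectivity'' statement is not an elementary lemma one can prove in isolation: it is simply false for finite soluble groups with trivial center without assuming connectivity of $\Gamma_{GK}(G)$. For instance, in $A_4$ two elements of order $3$ lying in distinct Sylow $3$-subgroups generate all of $A_4$, which is not nilpotent, and neither is adjacent to any involution (any subgroup containing elements of orders $2$ and $3$ is $A_4$ itself); so they lie in distinct components of $\Delta_{\rm nil}(A_4)$. Consequently, any proof of your propagation step must make genuinely global use of the prime-graph connectivity hypothesis, and the only available argument of this kind is essentially the analysis of \cite{DGLM} itself: one proves that a disconnected nilpotent graph forces $G$ to be Frobenius or $2$-Frobenius, and then reads off disconnectedness of $\Gamma_{GK}(G)$ from the Frobenius partition. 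In other words, the detour through $\Gamma_{GK}(G)$ and the soluble Gruenberg--Kegel theorem does not buy anything: completing it requires reproving the classification you were trying to avoid, so you should simply take the fallback route, which is the paper's proof.
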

\begin{proof}
   Assume that  $\vec\Delta_{\rm norm}(G)$ is strongly disconnected. By Theorem \ref{thm:nil-norm} it follows that $\Delta_{\rm nil}(G)$ is disconnected and Theorem A in \cite{DGLM} 
   yields that $G$ is a Frobenius or a $2$-Frobenius group.
\end{proof}

Now we show that  $\vec\Delta_{\rm norm}(G)$ is always strongly disconnected when $G$ is a Frobenius group.

\begin{proposition}\label{prop:norm}
Let $G$ be a Frobenius group. Then $\vec\Delta_{\rm norm}(G)$ is strongly disconnected. 
\end{proposition}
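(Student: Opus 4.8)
The plan is to exhibit a proper nonempty sink set, which immediately precludes strong connectivity. Write $G=KH$ with kernel $K$ and complement $H$, and set $X=\bigcup_{g\in G}H^g$. Since a Frobenius group has trivial center, Corollary \ref{cor:norm:A} gives $\Univ(G)=\{1\}$, so the vertex set of $\vec\Delta_{\rm norm}(G)$ is $G\setminus\{1\}$. I would take $S=X\setminus\{1\}$ as the candidate sink set and $K\setminus\{1\}$ as the nonempty complementary region that cannot be reached from $S$.

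The key step is to verify that no arc leaves $S$. Recall from $(i)$ of Proposition \ref{prop:norm:vicinato} that $N^+(x)=N_G(\langle x\rangle)$, so the out-neighbours of $x$ are exactly the elements normalizing $\langle x\rangle$. Given $x\in S$, write $x=h^g$ with $h\in H\setminus\{1\}$ and $g\in G$. Conjugating Lemma \ref{lem:norm1}, which asserts $N_G(\langle h\rangle)\subseteq H$, yields $N_G(\langle x\rangle)=N_G(\langle h\rangle)^g\subseteq H^g\subseteq X$. Hence every out-neighbour of $x$ again lies in $X\setminus\{1\}=S$, and thus $S$ is a sink set.

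It then remains to check that $K\setminus\{1\}$ is nonempty and disjoint from $S$. Nontriviality of the kernel gives $K\setminus\{1\}\neq\emptyset$. Since $K$ is normal and $K\cap H=\{1\}$, for every $g\in G$ we have $K\cap H^g=(K\cap H)^g=\{1\}$, so $K\setminus\{1\}$ is disjoint from $S$. Consequently $S$ is a proper nonempty sink set: no directed path starting at a vertex of $S$ can ever reach a vertex of $K\setminus\{1\}$, and therefore $\vec\Delta_{\rm norm}(G)$ is strongly disconnected.

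There is essentially no serious obstacle here, as the whole argument is a one-step reduction to Lemma \ref{lem:norm1}. The only point requiring a little care is the bookkeeping that the non-identity elements split between the kernel and the conjugates of the complement, so that $S$ is genuinely a \emph{proper} sink set; this is the standard structure theory of Frobenius groups, encapsulated in the identity $K\cap H^g=\{1\}$.
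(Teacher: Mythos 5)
Your proof is correct and takes essentially the same approach as the paper: both arguments reduce to Lemma \ref{lem:norm1}, which shows that a set of vertices built from the Frobenius complement has no outgoing arcs and therefore cannot reach the nonempty set $K\setminus\{1\}$. The only cosmetic difference is that you use the union $X\setminus\{1\}$ of all conjugates of $H$ as the sink set (and explicitly note $\Univ(G)=\{1\}$ via Corollary \ref{cor:norm:A}), whereas the paper works with the single complement $H\setminus\{1\}$.
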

\begin{proof}
    Let $G=K \rtimes H$, with $K$ the Frobenius kernel of $G$ and $H$ a Frobenius complement of $G$. By Lemma \ref{lem:norm1} for any $h \in H\setminus \{1\}$ we have $N_G(\langle h \rangle) \subseteq H$ and thus no element of $G\setminus\{1\}$ outside $H$ normalizes any cyclic subgroup of $H$. Therefore, the induced subgraph of $\vec\Delta_{\rm norm}(G)$ on $H$ is a source, no arrows goes from a vertex of $H$ to a vertex outside $H$, and thus $\vec\Delta_{\rm norm}(G)$ is strongly disconnected. 
\end{proof}

The next two results will be very useful in the sequel.

\begin{lemma}\label{lem:normcentro}
Let $G=KH$ be a finite group such that $K \unlhd G$ is nilpotent and $H$ is cyclic. If there exist a prime $p \in \pi(H)$ and a prime $r \in \pi(K)$ such that $p \mid r-1$, then there exist $k \in Z(K) \setminus \{1\}$ and $h \in H\setminus \{1\}$ such that $k \to h $ in $\vec\Gamma_{\rm norm}(G)$. 
\end{lemma}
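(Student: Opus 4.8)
The plan is to translate the arrow $k \to h$ into a normalizer condition and then exhibit a suitable eigenvector. By part $(i)$ of Proposition \ref{prop:norm:vicinato}, $k \to h$ holds precisely when $h \in N_G(\langle k \rangle)$, that is, when $h$ normalizes $\langle k \rangle$. So it suffices to produce $k \in Z(K)\setminus\{1\}$ and $h \in H\setminus\{1\}$ with $k^h \in \langle k \rangle$.

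First I would exploit the nilpotency of $K$. Since $K$ is finite and nilpotent, it is the direct product of its Sylow subgroups, so the Sylow $r$-subgroup $K_r$ is nontrivial (because $r \in \pi(K)$) and characteristic in $K$. Consequently $Z(K_r)$ is a nontrivial abelian $r$-group, and its subgroup $V=\Omega_1(Z(K_r))$ of elements of order dividing $r$ is a nontrivial elementary abelian $r$-group. Being characteristic in $K$ and $K \unlhd G$, the subgroup $V$ is normal in $G$; note also $V \leq Z(K)$. Next, since $p \in \pi(H)$ and $H$ is cyclic, I pick $h \in H$ of order $p$. Viewing $V$ additively as a vector space over $\mathbb{F}_r$, conjugation by $h$ induces an $\mathbb{F}_r$-linear map $\phi\colon V \to V$ with $\phi^p=\mathrm{id}$.

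The crux is to find a one-dimensional $\phi$-invariant subspace, which corresponds to an $h$-invariant cyclic subgroup $\langle k\rangle$. Here the hypothesis $p \mid r-1$ enters: it forces $p \neq r$ (so $\gcd(p,r)=1$) and guarantees that $\mathbb{F}_r^\times$, being cyclic of order $r-1$, contains a primitive $p$-th root of unity. Hence $x^p-1$ splits into distinct linear factors over $\mathbb{F}_r$, the minimal polynomial of $\phi$ divides it, and $\phi$ is diagonalizable over $\mathbb{F}_r$. Since $V \neq 0$, there is a nonzero eigenvector $v$ whose eigenvalue, lifted to an integer $\lambda$, satisfies $v^h=v^\lambda \in \langle v\rangle$. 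Setting $k=v \in Z(K)\setminus\{1\}$ then yields $h \in N_G(\langle k\rangle)$, i.e. $k \to h$, as required.

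The main obstacle is precisely this linear-algebra step: translating the group-theoretic goal into the existence of an eigenvector, and recognizing that $p \mid r-1$ is exactly the condition making the order-$p$ action of $h$ diagonalizable over $\mathbb{F}_r$ (equivalently, making the $p$-th roots of unity available), so that an $h$-invariant eigenline---and hence an $h$-invariant cyclic subgroup of $Z(K)$---must exist. The remaining steps, namely the reduction to $V$ and the verification that $V$ is $h$-invariant, are routine consequences of the nilpotency of $K$ and the normality of $K$ in $G$.
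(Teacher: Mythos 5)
Your proof is correct, and it takes a genuinely different route from the paper's. You reduce to the elementary abelian layer $V=\Omega_1(Z(K_r)) \leq Z(K)$, regard it as an $\mathbb{F}_r$-vector space, and observe that since $p \mid r-1$ the polynomial $x^p-1$ splits into distinct linear factors over $\mathbb{F}_r$, so the order-$p$ action of $h$ is diagonalizable and admits an eigenline $\langle k\rangle$; invertibility of the action gives $\langle k\rangle^h=\langle k\rangle$, hence $k \to h$. The paper instead takes $V \leq Z(O_r(K))$ to be a minimal normal subgroup of $Z(K)\langle h\rangle$ and argues by contradiction: if $V$ is not cyclic, minimality forces $h$ to normalize no nontrivial proper subgroup of $V$, so all $\langle h\rangle$-orbits on the set $\mathcal{C}$ of nontrivial cyclic subgroups of $V$ have size $p$, whence $|\mathcal{C}|=\frac{r^\ell-1}{r-1}\equiv \ell \equiv 0 \pmod p$ (using $r\equiv 1 \pmod p$), contradicting $2\leq \ell\leq p-1$ after a separate argument excludes $|V|=r^p$ via the $h$-fixed element $v\,v^h\cdots v^{h^{p-1}}$. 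Both proofs hinge on the same arithmetic fact---$p \mid r-1$ places the $p$-th roots of unity inside $\mathbb{F}_r$, which is also what makes the paper's congruence on $|\mathcal{C}|$ effective---but yours is direct rather than by contradiction, uses only the standard eigenvector argument (no Maschke needed, since the minimal polynomial divides the separable split polynomial $x^p-1$), and makes completely transparent where the hypothesis enters; the paper's argument is purely combinatorial and group-theoretic, avoiding linear-algebra language at the cost of a case analysis on $|V|$.
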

\begin{proof}
Let $h \in H$ of order $p$ and $V \leq Z\bigl(O_r(K)\bigr) \leq Z(K) \leq K$ be a minimal normal subgroup of $Z(K)\langle h\rangle $.   Assume by contradiction that $V$ is not cyclic. Then $|V| = r^m$ with $m>1$.  The minimality of $V$ implies that $h$ does not normalize any nontrivial proper subgroup of $V$.

Let $v \in V\setminus \{1\}$. Then $W=\langle v,\,v^h,\,v^{h^2},\dots,v^{h^{p-1}}\rangle \leq V$ and it is normal in $ Z(K)\langle h\rangle $. Thus, $V=W$. If $V$ has order $r^p$ then $w = v\cdot v^h\cdot v^{h^2}\cdots v^{h^{p-1}}$ is nontrivial and it is centralized by $h$, a contradiction. Therefore there exists a natural number $2 \leq \ell \leq p-1$ such that $|V| = r^\ell$.

Let $\mathcal{C}$ be the set of all nontrivial cyclic subgroups of $V$.  As $p\mid r-1$, we have $r\equiv 1\pmod p$. Obviously there are $r^\ell -1$ nontrivial elements in $V$ and for every nontrivial element $x$ of $V$ the subgroup $\langle x \rangle$ is a cyclic subgroup with $r-1$ generators. It follows that 
\[
|\mathcal{C}|
=\frac{r^\ell-1}{r-1}
=r^{\ell-1} + r^{\ell-2} + \cdots + r + 1
\equiv \ell \pmod p.
\]
Moreover, consider the action of $\langle h \rangle $ on $\mathcal{C}$ by conjugation. Thus every orbit of the action has size $p$.  So 
\[
|\mathcal{C}|\equiv 0 \pmod p.
\]
 It follows that $\ell \equiv 0 \pmod p$, a contradiction. 

 Thus $V$ is cyclic. Then, $h$ normalizes $V$ and we have a connection between an element of $Z(K)\setminus\{1\}$ and $h$. 

\end{proof}

\begin{corollary}\label{cor:normcentro}
    Let $G$ be a Frobenius group with kernel $K$ and cyclic complement $H$. If there exist a prime $p \in \pi(H)$ and a prime $r \in \pi(K)$ such that $p \mid r-1$,  then there exists a path in $\vec\Delta_{\rm norm}(G)$ connecting a vertex in $Z(K)$ to any vertex in $H^g$ in at most $2$ steps for all $g \in G$.
\end{corollary}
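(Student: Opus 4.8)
The plan is to transport the single connection supplied by Lemma~\ref{lem:normcentro} to every conjugate of $H$ by means of conjugation. Since $K$ is the Frobenius kernel it is nilpotent, and $H$ is cyclic by hypothesis, so the hypotheses of Lemma~\ref{lem:normcentro} are met and I obtain $k_0\in Z(K)\setminus\{1\}$ and $h_0\in H\setminus\{1\}$ with $k_0\to h_0$. A Frobenius group has trivial centre, so by Corollary~\ref{cor:norm:A} we have $\Univ(G)=\{1\}$; hence the vertices of $\vec\Delta_{\rm norm}(G)$ are exactly the non-identity elements of $G$, and it suffices to produce directed paths all of whose vertices are non-trivial.

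The argument rests on two elementary facts. First, adjacency in $\vec\Gamma_{\rm norm}(G)$ is conjugation-invariant: if $a\to b$ then $\langle a^g\rangle=\langle a\rangle^g\unlhd\langle a,b\rangle^g=\langle a^g,b^g\rangle$, so $a^g\to b^g$ for every $g\in G$. Second, $Z(K)$ is characteristic in $K$ and $K\unlhd G$, whence $Z(K)\unlhd G$; in particular $k_0^g\in Z(K)\setminus\{1\}$ for every $g\in G$. I also record that, $H$ being abelian, $h_0$ commutes with every $h\in H$, so that $h_0\to h$ for all $h\in H$.

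I would then assemble the path as follows. Fix $g\in G$ and a vertex $w\in H^g$, and write $w=h^g$ with $h\in H\setminus\{1\}$. Conjugating the arcs $k_0\to h_0$ and $h_0\to h$ by $g$ gives $k_0^g\to h_0^g$ and $h_0^g\to h^g=w$, so that $k_0^g\to h_0^g\to w$ is a directed path of length at most $2$. Its source $k_0^g$ lies in $Z(K)\setminus\{1\}$ and all three vertices are non-trivial, hence belong to $\vec\Delta_{\rm norm}(G)$; this is precisely the path required. (If it happens that $w=h_0^g$, the path degenerates to the single arc $k_0^g\to w$, which is still admissible.)

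There is no serious obstacle here: the only point requiring care is that the source of the conjugated path still lies in $Z(K)$, and this is exactly where the normality of $Z(K)$ in $G$ (rather than merely in $K$) is used — it is what allows a single connection from $Z(K)$ into $H$ to reach every conjugate $H^g$ simultaneously. Everything else reduces to the conjugation-invariance of arcs together with the commutativity of the cyclic complement $H$.
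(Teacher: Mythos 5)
Your proof is correct and follows essentially the same route as the paper's: both invoke Lemma~\ref{lem:normcentro} for one arc $k_0\to h_0$, transport it to $H^g$ by conjugation (the paper verifies this invariance explicitly via $(k^g)^{h^g}=(k^n)^g=(k^g)^n$, you state it in general), note that $k_0^g$ stays in $Z(K)$ by normality of $Z(K)$ in $G$, and then use commutativity of the cyclic $H^g$ to reach every vertex of $H^g$ in one more step. Your additional checks (nilpotency of the kernel to apply the lemma, triviality of $\Univ(G)$ so that all nontrivial vertices survive in $\vec\Delta_{\rm norm}(G)$, and the degenerate case $w=h_0^g$) are points the paper leaves implicit, but they do not change the argument.
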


\begin{proof}
    Let $G$ be a Frobenius group. By Lemma \ref{lem:normcentro} there exist elements $k \in Z(K)\setminus \{1\}$ and $h \in H\setminus \{1\}$ such that $k^h=k^n $, for some integer $n$. Let $g \in G$. Obviously $h^g \in H^g\setminus\{1\}$. Then $k^g \in Z(K)\setminus\{1\}$ and 
\[
(k^g)^{h^g}=(k^h)^g=(k^n)^g=(k^g)^n,
\]
thus $h^g$ normalizes $\langle k^g \rangle$. Since $H^g$ is cyclic, $k^g$ is connected to every vertex in $H^g$ in at most $2$ steps and we are done.
\end{proof}

We now start our investigation on $2$-Frobenius groups, with the goal of characterizing when $\vec \Delta_{\rm norm}(G)$ for such a group is strongly disconnected.

\begin{proposition}\label{prop:normcomp}
    Let $G$ be a $2$-Frobenius group as in Definition \ref{2-Frob}.  Then 
    \begin{enumerate}
        \item[$(i)$] for any $g \in G\setminus HK$ there exists $k \in K$ such that $g \leftrightarrow g_{[p]} \leftrightarrow k$;%, where $g_p$ denotes the $p$-component of $g$;
        \item[$(ii)$] $G\setminus X$ lies in a strongly connected component of $\vec \Delta_{\rm norm}(G)$;
        \item[$(iii)$] for any $x \in X \setminus \{ 1\}$ there exists an element $y_{[p]} \in G \setminus HK$ of prime power order such that $x \to y_{[p]}$.
    \end{enumerate}
\end{proposition}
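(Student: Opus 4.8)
The plan is to use the Frobenius kernel $K$ as the common hub: in $(i)$ I will attach each $g\in G\setminus KH$ to $K$, in $(ii)$ I will show $K$ together with all of $G\setminus KH$ forms one strongly connected component, and in $(iii)$ I will send each element of $X$ outward to a normalizing element of prime order outside $KH$. Throughout I will use that a $2$-Frobenius group satisfies $Z(G)=\{1\}$, whence $\Univ(G)=\{1\}$ by Corollary \ref{cor:norm:A} and every nonidentity element is a vertex; that $K$ is nilpotent; and the Frobenius partition $KH=K\cup X$ with $K\cap X=\{1\}$, which follows from Lemma \ref{lem:norm3}$(iii)$.

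For $(i)$, I would first note that $g\leftrightarrow g_{[p]}$ is automatic for any prime $p\mid o(g)$, since $g$ and $g_{[p]}$ generate the cyclic (hence abelian) group $\langle g\rangle$. The delicate point is choosing $p$ so that $g_{[p]}$ can be joined to $K$. The plan is to take $p$ dividing the order of the image $\bar g=gKH$ in $G/KH\cong L/K$; since the kernel and complement of the Frobenius group $G/K$ have coprime orders, such a $p$ lies in $\pi(L/K)$ and so $p\notin\pi(H)$. As every nontrivial element of $X$ has order dividing $|H|$, this forces $g_{[p]}\notin X$, so $g_{[p]}\in K\setminus\{1\}$ or $g_{[p]}\in G\setminus KH$. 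In the first case $C_K(g_{[p]})\neq\{1\}$ because it contains $g_{[p]}$; in the second, $g_{[p]}$ has prime order outside $KH$, so Lemma \ref{lem:norm3}$(ii)$ gives $C_K(g_{[p]})\neq\{1\}$. Picking $k\in C_K(g_{[p]})\setminus\{1\}$ then yields $g\leftrightarrow g_{[p]}\leftrightarrow k$.

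For $(ii)$, I would record that $G\setminus X=(K\setminus\{1\})\cup(G\setminus KH)$, using $X\subseteq KH$ and $KH\setminus X=K\setminus\{1\}$. The set $K\setminus\{1\}$ lies in one strongly connected component: for $k_1,k_2\in K\setminus\{1\}$, the subgroup $\langle k_1,k_2\rangle$ is nilpotent, hence has nontrivial center, and any $z\in Z(\langle k_1,k_2\rangle)\setminus\{1\}\subseteq K$ gives $k_1\leftrightarrow z\leftrightarrow k_2$ (the argument underlying Theorem \ref{thm:nil-norm}). Part $(i)$ then attaches every $g\in G\setminus KH$ to this component, so all of $G\setminus X$ lies in a single strongly connected component. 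For $(iii)$, by Lemma \ref{lem:norm3}$(iii)$ and conjugation-invariance of $\vec\Gamma_{\rm norm}(G)$ I may assume $x=h\in H\setminus\{1\}$, a witness for a conjugate $h^{k_0}$ being obtained by conjugating a witness for $h$. Since $H$ is cyclic (Lemma \ref{lem:norm3}$(i)$), $\langle h\rangle$ is characteristic in $H$, so $N_G(H)\subseteq N_G(\langle h\rangle)$, and it suffices to find an element of prime power order in $N_G(H)\setminus KH$. To reach outside $KH$ I would run a Frattini argument: as $(|K|,|H|)=1$, all complements of $K$ in the normal subgroup $KH$ are conjugate by Schur–Zassenhaus, giving $G=KH\cdot N_G(H)=K\,N_G(H)$; together with $N_G(H)\cap KH=N_{KH}(H)=H$ this yields $N_G(H)/H\cong G/KH\cong L/K\neq\{1\}$. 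Hence some $w\in N_G(H)\setminus KH$ exists; taking $p\mid o(wKH)$ (so again $p\notin\pi(H)$) and the order-$p$ power $w_{[p]}\in N_G(H)$, the fact that $\langle w\rangle\cap KH\leq H$ is a $\pi(H)$-group forces $w_{[p]}\notin KH$, so $y_{[p]}=w_{[p]}$ satisfies $x=h\to y_{[p]}$ with $y_{[p]}\in G\setminus KH$ of prime order.

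I expect the main obstacle to be the structural step in $(iii)$: proving that $N_G(H)$ genuinely surjects onto the outer complement $G/KH\cong L/K$, which is exactly what guarantees normalizers of $\langle h\rangle$ outside $KH$. The recurring technical subtlety in both $(i)$ and $(iii)$ — selecting $p$ through the quotient $G/KH$ so that the relevant $p$-power escapes $X$ (respectively $KH$) — is the reason one must work with $\bar g=gKH$ rather than an arbitrary prime divisor of $o(g)$, as the naive choice can fail (e.g. a $4$-cycle in $S_4\cong$ a $2$-Frobenius group has its order-$2$ power inside $K$).
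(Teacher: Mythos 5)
Your proof is correct, and while it follows the same overall skeleton as the paper's proof (hub every element through centralizers in $K$ for $(i)$--$(ii)$; for $(iii)$ produce a normalizer of $H$ outside $HK$, use that $H$ is cyclic so $\langle h\rangle$ is characteristic in it, then conjugate), the technical execution differs in two places, and in both you are more careful than the paper. For $(i)$, the paper asserts outright that $g_{[p]}$ lies outside $HK$ (``otherwise $g_{[p]}K$ would lie in the kernel but also in the complement of $G/K$''), an argument that only excludes $g_{[p]} \in HK\setminus K$ and silently ignores the possibility $g_{[p]} \in K$ --- which, exactly as your $S_4$ example shows, does occur for a naive choice of $p$. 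Your selection of $p$ through the quotient $G/HK \cong L/K$ (so that $p \notin \pi(H)$), combined with the two-case analysis $g_{[p]}\in K\setminus\{1\}$ (where $C_K(g_{[p]})$ is nontrivial because it contains $g_{[p]}$) versus $g_{[p]}\in G\setminus HK$ (where Lemma \ref{lem:norm2}$(ii)$ applies), closes this gap; since the statement of $(i)$ is existential in $p$ and in $k$, both outcomes are acceptable. For $(iii)$, the paper obtains $y=gk^{-1}\in N_G(H)\setminus HK$ from Lemma \ref{lem:norm2}$(iii)$ (all $G$-conjugates of $H$ are $K$-conjugates --- morally the same Frattini argument you run via Schur--Zassenhaus and $N_G(H)\cap HK=N_{HK}(H)=H$) and then extracts from $y$ a power of prime \emph{power} order outside $HK$; your variant, choosing $p \mid o(wHK)$ so that $p\notin\pi(H)$ and using $\langle w\rangle\cap HK \leq H$ to force $w_{[p]}\notin HK$, yields an element of prime order outside $HK$, which is strictly stronger. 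That refinement is genuinely useful: when Proposition \ref{prop:normcaratt} later invokes $(iii)$ it actually asks for a normalizing element of \emph{prime} order (so that Lemma \ref{lem:norm2}$(ii)$ can be applied to it), which your formulation delivers directly, whereas ``prime power order'' would need an extra step. Part $(ii)$ and the conjugation reduction in $(iii)$ coincide with the paper's argument.
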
 

\begin{proof}
To prove $(i)$, let $g \in G\setminus HK$. Then $g \leftrightarrow g_{[p]}$. Of course $g_{[p]}$ is outside $HK$, otherwise $g_{[p]}K$ would lie in the kernel but also in the complement of the Frobenius group $G/K$. By Lemma \ref{lem:norm2} $C_K(g_{[p]}) \neq \{1\}$, therefore there exists an element $k \in K\setminus\{1\}$ such that $g_{[p]} \leftrightarrow k$.

    To show $(ii)$, let $g \in G\setminus X$ such that $g \not \in K$. By $(i)$, there exists $k \in K \setminus \{1\}$ such that $g \leftrightarrow g_{[p]} \leftrightarrow k$. Since $K$ has nontrivial center, there exists $z \in Z(K)$ such that  $k_1 \leftrightarrow z \leftrightarrow k$ for all $k_1 \in K$ . Thus there exists a path that connects every element of $G\setminus X$ with every other element of $G\setminus X$, and item $(ii)$ is proved.
    
    As regards item $(iii)$, let $g \in G \setminus HK$. Then $H^gK=H K$, so there exists $k \in K$ such that $H^g=H^k$ by Lemma \ref{lem:norm2}. As a consequence $y=g k^{-1} \in N_G(H) \setminus HK$. In particular, there exists $y_{[p]}$ of $y$ in $G\setminus HK$, which normalizes $H$, too.  As $H$ is cyclic, all its subgroups are characteristic in $H$, so for any $h_1 \in H$ we have $h_1 \to y_{[p]}$. By definition, any element of $X$ belongs to some conjugate of $H$. Thus, taking a suitable conjugate of $y_{[p]}$, the result follows. 
\end{proof}

 For a group $G$, we denote with $\Delta_{\rm norm}(G)$ the undirected graph induced by $\vec \Delta_{\rm norm}(G)$.

\begin{proposition}\label{prop:normcaratt}
    Let $G$ be a $2$-Frobenius group as in Definition \ref{2-Frob}.     Then $\vec\Delta_{\rm norm}(G)$ is strongly disconnected if and only if $p  \nmid r-1$ for all $p \in \pi(H)$ and for all $r \in \pi(K)$. 
\end{proposition}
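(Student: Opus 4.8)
The plan is to work with the partition of the vertex set of $\vec\Delta_{\rm norm}(G)$ induced by $X$. Since $G$ is $2$-Frobenius we have $Z(G)=\{1\}$, so $\Univ(G)=\{1\}$ by Corollary~\ref{cor:norm:A} and the vertex set is $G\setminus\{1\}$. Writing $A=X\setminus\{1\}$ and $B=G\setminus X$, these are disjoint, cover $G\setminus\{1\}$, and are both invariant under conjugation; moreover $A\neq\emptyset$ and $B\neq\emptyset$ because $H$ and $K$ are nontrivial, and $X\subseteq KH$ gives $(K\setminus\{1\})\cup(G\setminus KH)\subseteq B$. I want to show that the arithmetic condition controls exactly whether $A$ is cut off from $B$.

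For the direction ``$p\nmid r-1$ for all $p\in\pi(H),r\in\pi(K)$ $\Rightarrow$ strongly disconnected'', I would prove that $A$ is a source set, i.e.\ that no arc runs from $B$ to $A$. Granting this, Proposition~\ref{prop:normcomp}$(ii)$ keeps $B$ inside one strongly connected component while every vertex of $A$ admits an arc into $B$ by Proposition~\ref{prop:normcomp}$(iii)$; hence no vertex of $B$ can reach any vertex of $A$ and the graph is strongly disconnected. To prove the source-set claim, conjugation-invariance of $X$ and $B$ lets me bring the target into $H$, so it suffices to rule out $b\to a$ with $a\in H\setminus\{1\}$ and $b\in B$; replacing $a$ by a suitable power I may take $o(a)=p\in\pi(H)$. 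Suppose $a$ normalizes $\langle b\rangle$. If $b\in K$, then $a$ normalizes the characteristic cyclic subgroup $\langle c\rangle\leq\langle b\rangle$ of prime order $r\in\pi(K)$, hence maps into $\Aut(\langle c\rangle)\cong C_{r-1}$; as $p\nmid r-1$ this image is trivial, so $a$ centralizes $c\in K\setminus\{1\}$, contradicting the fixed-point-free action of the Frobenius complement of $KH$ on $K$. If instead $b\in G\setminus KH$, I pass to the Frobenius quotient $G/K$: there $aK$ is a nontrivial element of the kernel $KH/K$, $bK$ is a nontrivial element outside it, and $aK$ normalizes $\langle bK\rangle$; Lemma~\ref{lem:norm1} applied to $G/K$ then forces $aK$ into a conjugate of the complement $L/K$, which meets the kernel $KH/K$ trivially, whence $aK=1$, a contradiction.

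For the converse I would argue the contrapositive: if some $p\in\pi(H)$ and $r\in\pi(K)$ satisfy $p\mid r-1$, then $\vec\Delta_{\rm norm}(G)$ is strongly connected. The inner group $KH$ is Frobenius with kernel $K$ and, by Lemma~\ref{lem:norm2}$(i)$, cyclic complement $H$, so Corollary~\ref{cor:normcentro} provides, for each $g$, a path of length at most $2$ from a vertex of $Z(K)\subseteq B$ to any vertex of $H^g$. By Lemma~\ref{lem:norm2}$(iii)$ every element of $A$ lies in some $H^k$ with $k\in K$, so every vertex of $A$ is reachable from $B$; combined with Proposition~\ref{prop:normcomp}$(ii)$ (all of $B$ in one component) and Proposition~\ref{prop:normcomp}$(iii)$ (each vertex of $A$ reaches $B$), this shows every ordered pair is mutually reachable.

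The step I expect to be the crux is the source-set claim, and within it the kernel case: this is the unique place where the hypothesis $p\nmid r-1$ is actually consumed, through the elementary fact that an automorphism of order $p$ of a cyclic $r$-group can be nontrivial only if $p\mid r-1$. The outside-$KH$ case is purely structural and never touches the arithmetic, so the genuine work is isolating the right prime-order characteristic subgroup of $\langle b\rangle$ and combining fixed-point-freeness with the automorphism count; by contrast the converse direction is essentially an assembly of Proposition~\ref{prop:normcomp} and Corollary~\ref{cor:normcentro}.
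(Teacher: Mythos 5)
Your proposal is correct, but it follows a genuinely different route from the paper's. The paper outsources the Frobenius-group core of both directions to Proposition~10 of \cite{FarrellParker2025}: in the forward direction it quotes that result to conclude that no arc runs from $K$ into any conjugate of $H$ inside $HK$ (and then handles tails in $G\setminus HK$ exactly as you do, via Lemma \ref{lem:norm1} applied in $G/K$); in the converse it assumes strong disconnectedness, argues by contradiction that $\Delta_{\rm norm}(HK)$ must be disconnected, and invokes \cite{FarrellParker2025} again to extract the arithmetic condition. You instead keep everything internal to the paper: your kernel case (the unique subgroup $\langle c\rangle$ of prime order $r$ is characteristic in the cyclic group $\langle b\rangle$, $|\Aut(\langle c\rangle)|=r-1$, so $p\nmid r-1$ forces $a$ to centralize $c$, contradicting fixed-point-freeness of the complement of $KH$ on $K$) is an elementary replacement for the first citation, and your contrapositive converse reassembles the paper's own machinery (Lemma \ref{lem:norm2}, Corollary \ref{cor:normcentro}, Proposition \ref{prop:normcomp}$(ii)$--$(iii)$) exactly where the hypothesis $p\mid r-1$ becomes available, replacing the second citation and the paper's somewhat delicate contradiction argument. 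What the paper's route buys is brevity; what yours buys is self-containedness and a precise isolation of where the arithmetic is consumed. Two points you should make explicit to be airtight: your case split $b\in K$ versus $b\in G\setminus KH$ exhausts $B$ only because of the Frobenius partition $KH=K\cup\bigcup_{k\in K}H^k$, which gives $B=(K\setminus\{1\})\cup(G\setminus KH)$ rather than merely the inclusion you record; and Corollary \ref{cor:normcentro} is stated for a Frobenius group, so you are applying it to the subgroup $KH$ and then transporting the resulting paths into $\vec\Delta_{\rm norm}(G)$, which is legitimate since adjacency is intrinsic and $\Univ(G)=\Univ(KH)=\{1\}$ (this matches the paper's own use of that corollary in Proposition \ref{prop:2frobconnected}). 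A last stylistic remark: in the forward direction your appeal to Proposition \ref{prop:normcomp} is not needed, since a nonempty proper source set with nonempty complement already witnesses strong disconnectedness.
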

\begin{proof}
         
     First assume that $p  \nmid r-1$ for all $p \in \pi(H)$ and for all $r \in \pi(K)$. Then $\Delta_{\rm norm}(HK)$ is strongly disconnected by Proposition~10 of \cite{FarrellParker2025}. It immediately follows that no arrows goes from any vertex in $K$ to a vertex in any conjugate $H^g$. Finally, by contradiction assume that for some $g \in G\setminus HK$ there exists $x \in X \setminus\{1\}$ with $g \to x$. Then $\langle gK \rangle$ is normalized by $xK$ in $G/K$. Since $gK$ lies in a complement of $G/K$, Lemma \ref{lem:norm1} forces $xK=K$, hence $x \in K$, which is a contradiction. Therefore $X$ is a source set, and $\vec\Delta_{\rm norm}(G)$ is  strongly disconnected.

Conversely, assume that $\vec\Delta_{\rm norm}(G)$ is strongly disconnected. Then, it suffices to show that $\Delta_{\rm norm}(HK)$ is strongly disconnected, as the result will follow from Proposition 10 of \cite{FarrellParker2025}. Thus, by way of contradiction assume that $\Delta_{\rm norm}(HK)$ is strongly connected. Then for any $x_1,x_2 \in X \setminus\{1\}$ there is a path connecting $x_1$ to $x_2$. Indeed, Lemma \ref{lem:norm1} implies the existence of $k_2 \in K$ such that $k_2 \to x_2$. Moreover, by items $(i)$ and $(iii)$ of Proposition \ref{prop:normcomp} there exist $g_{[p]} \in G \setminus HK$ of prime order and $k_1 \in K \setminus \{1\}$ such that $x_1 \to g_{[p]} \leftrightarrow k_1$. As the center of $K$ is nontrivial, there exists $z \in Z(K) \setminus \{1\}$ such that $x_1 \to g_{[p]} \to k_1 \to z \to k_2 \to x_2$. This, together with item $(ii)$ of Proposition \ref{prop:normcomp}, implies that $\vec\Delta_{\rm norm}(G)$ is strongly connected, which is a contradiction.
\end{proof}

We can now characterize when $\vec\Delta_{\rm norm}(G)$ is strongly disconnected for a soluble group $G$ with trivial center, proving Theorem B.

\begin{proof}[Proof of Theorem B]
    If $G$ is a Frobenius group or a 2-Frobenius group as in $(ii)$, then $\vec\Delta_{\rm norm}(G)$ is strongly disconnected  by Proposition \ref{prop:norm} and Proposition \ref{prop:normcaratt}, respectively.

    Conversely, if $\vec\Delta_{\rm norm}$ is strongly disconnected, then $G$ is a Frobenius or a $2$-Frobenius group by Corollary \ref{cor:norm}. Suppose $G$ is a $2$-Frobenius group, with $K \lhd KH \lhd G $, $KH$ a Frobenius group and $G/K$ a Frobenius group with kernel $KH/K$. Then, by Proposition \ref{prop:normcaratt}, $\vec\Delta_{\rm norm}$ strongly disconnected implies that for all $p \in \pi(H)$ and for all $r \in \pi(K)$ we have $p \nmid r-1$. This concludes the proof.
\end{proof}

Now we focus on the diameter of  $\vec\Delta_{\rm norm}(G)$ when it is strongly connected.

\begin{proposition}\label{prop:2frobconnected}
    Let $G$ be a $2$-Frobenius group as in Definition \ref{2-Frob}. If $\vec\Delta_{\rm norm}(G)$ is strongly connected then ${\rm diam}(\vec\Delta_{\rm norm}(G)) \leq 6$.
\end{proposition}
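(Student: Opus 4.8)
The plan is to exhibit a single hub vertex $z \in Z(K) \setminus \{1\}$ and to show that \emph{every} vertex of $\vec\Delta_{\rm norm}(G)$ lies at directed distance at most $3$ from $z$ in both directions. Since for any $u,v$ one has $\vec d(u,v) \le \vec d(u,z) + \vec d(z,v)$, the four uniform bounds $\vec d(\cdot,z),\ \vec d(z,\cdot) \le 3$ immediately give $\mathrm{diam}(\vec\Delta_{\rm norm}(G)) \le 6$.

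First I would record the decomposition of the vertex set. By Lemma \ref{lem:norm2}$(iii)$ we have $X = \bigcup_{k\in K} H^k \subseteq KH$, and since $KH$ is Frobenius with kernel $K$ a counting argument gives $KH \setminus \{1\} = (K\setminus\{1\}) \sqcup (X\setminus\{1\})$ with $K \cap X = \{1\}$. Hence $G\setminus\{1\}$ partitions into three disjoint pieces: $K\setminus\{1\}$, $X\setminus\{1\}$, and $G\setminus KH$. As $K$ is a Frobenius kernel it is nilpotent, so $Z(K)\neq\{1\}$; fix $z \in Z(K)\setminus\{1\}$. Because $z$ centralizes all of $K$, we get $k_1 \leftrightarrow z$ for every $k_1 \in K$, settling the kernel vertices at distance $1$ in both directions.

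Next I would treat the other two pieces with the machinery already assembled. For $g \in G\setminus KH$, Proposition \ref{prop:normcomp}$(i)$ yields $g \leftrightarrow g_{[p]} \leftrightarrow k$ for some $k\in K$; concatenating with $k \leftrightarrow z$ gives a bidirectional path of length at most $3$, so $\vec d(g,z),\ \vec d(z,g) \le 3$. For $x \in X\setminus\{1\}$ the two directions are handled separately. Forward: Proposition \ref{prop:normcomp}$(iii)$ gives $x \to y_{[p]}$ with $y_{[p]} \in G\setminus KH$ of prime order, and then $y_{[p]} \leftrightarrow k \leftrightarrow z$ (using $C_K(y_{[p]})\neq\{1\}$ from Lemma \ref{lem:norm2}$(ii)$), whence $\vec d(x,z)\le 3$. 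Backward: here I invoke the strong-connectivity hypothesis, which by Proposition \ref{prop:normcaratt} forces the existence of $p\in\pi(H)$ and $r\in\pi(K)$ with $p\mid r-1$. This is exactly the hypothesis of Corollary \ref{cor:normcentro}, applied to the Frobenius group $KH$ whose complement $H$ is cyclic by Lemma \ref{lem:norm3}$(i)$; it connects a suitable vertex $k^g \in Z(K)$ to $x$ in at most $2$ steps, and prefixing $z \to k^g$ (valid since $Z(K)$ is abelian) gives $\vec d(z,x)\le 3$. A small point to note is that an arrow established inside $KH$ is automatically an arrow in $\vec\Delta_{\rm norm}(G)$, since $\langle a,b\rangle$ is the same subgroup computed in $KH$ or in $G$.

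The main obstacle is precisely the backward reachability of the complement-type vertices $X$: in a Frobenius group arrows tend to flow \emph{out} of the complement rather than into it (Lemma \ref{lem:norm1}), so reaching an element of $X$ starting from the kernel is exactly the point where strong connectivity—equivalently the arithmetic condition $p \mid r-1$—becomes indispensable. Every other estimate reduces to bidirectional centralizing paths routed through $z$. Once the four distance bounds are in place, summing through the hub completes the proof.
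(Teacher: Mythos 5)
Your proof is correct and is essentially the paper's argument: the same partition of the vertex set into $K\setminus\{1\}$, $X\setminus\{1\}$ and $G\setminus KH$, the same key ingredients (Proposition \ref{prop:normcomp}, Lemma \ref{lem:norm2}, Proposition \ref{prop:normcaratt} feeding Corollary \ref{cor:normcentro} applied to $KH$), and the same paths routed through $Z(K)$. The only difference is organizational: you fix one hub $z\in Z(K)\setminus\{1\}$, prove the four uniform $3$-step bounds, and conclude by the triangle inequality, whereas the paper writes out the pairwise case analysis explicitly; the individual path constructions coincide, so your packaging is merely a cleaner presentation of the same proof.
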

\begin{proof}
From Proposition \ref{prop:normcaratt} there exist $p \in \pi(H)$ and $r \in \pi(K)$ such that $p  \mid r-1$.

Let $x,y \in G \setminus \{1\}$. If $x,y \in K$, then $x \leftrightarrow z  \leftrightarrow y$ for any nontrivial $z \in Z(K)$.

If $x \in K$ and $y \in X$, then by Corollary \ref{cor:normcentro} for any vertex $z \in Z(K)$ there exists a path connecting $z$ to $y$ in at most two steps. Thus $x\to z \to w \to y$ for a suitable $w$ and we are done. Moreover, by Proposition \ref{prop:normcomp} $(iii)$ and Lemma \ref{lem:norm2} there exist vertices $g_{[p]} \in G \setminus HK$, $k_2 \in K$ and $z_1 \in Z(K)$ such that $y \to g_{[p]} \to k_2 \to z_1 \to x$.

If $x \in K $ and $y \in G \setminus HK$, by Lemma \ref{lem:norm2} there exist vertices $y_{[p]} \in G \setminus HK$, $k \in K$ and $z \in Z(K)$ such that $y \leftrightarrow y_{[p]} \leftrightarrow k \leftrightarrow z \leftrightarrow x$.

If $x,y \in X$, then applying Lemma \ref{lem:normcentro} and Proposition \ref{prop:normcomp} $(iii)$ there exist vertices $z \in Z(K)$, $k \in K$, $g_{[p]} \in G \setminus HK$, $y_1 \in X$  such that $x \to g_{[p]} \to k \to z \to y_1 \to y$.

Let $x \in X$ and $y \in G \setminus HK$. On the one hand, Proposition \ref{prop:normcomp} and Lemma \ref{lem:norm2} implies the existence of vertices $g_{[p]}, y_{[q]} \in G \setminus HK$, $k,k_1 \in K$, $z \in Z(K)$ such that $x \to g_{[p]} \to k \to z \leftrightarrow k_1 \leftrightarrow y_{[q]} \leftrightarrow y$. On the other hand, Lemma \ref{lem:normcentro} ensures the existence of a vertex $k_2 \in Z(K)$ and $x_1 \in X$ such that $k_2 \to x$. Thus $y \to y_{[q]} \to k_1 \to k_2 \to x_1 \to x$.

 If $x,y \in G\setminus HK$ then by Proposition \ref{prop:normcomp} there exist $k, k_1 \in K$ and $z \in Z(K)$ such that $x \leftrightarrow x_{[p]} \leftrightarrow k \leftrightarrow z \leftrightarrow k_1 \leftrightarrow y_{[q]} \leftrightarrow y$.
\end{proof}

We recall that the {\it commuting graph} $\Delta_{\rm comm}(G)$ of a group $G$ is the simple and undirected graph whose vertices are all elements of $G\setminus Z(G)$ and two vertices are adjacent when they commute. Observe that if $x$ and $y$ are adjacent in $\Delta_{\rm comm}(G)$ then $x \leftrightarrow y$ in $\vec\Gamma_{\rm norm}(G)$. As usual, we refer to a group whose Sylow subgroups are abelian as an $A$-group.

\begin{proposition}
    Let $G$ be a finite soluble $A$-group with trivial center and $\vec \Delta_{\rm norm} (G)$ strongly connected. Then ${\rm diam}(\vec \Delta_{\rm norm} (G)) \leq 6$.
\end{proposition}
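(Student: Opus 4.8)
The plan is to transfer the problem to the commuting graph $\Delta_{\rm comm}(G)$, where the abelian-Sylow hypothesis can be exploited, and then read the answer back into $\vec\Delta_{\rm norm}(G)$. First I would record the vertex sets: since $Z(G)=\{1\}$, Corollary \ref{cor:norm:A} gives $\Univ(G)=\{1\}$, so the vertices of $\vec\Delta_{\rm norm}(G)$ are exactly $G\setminus\{1\}$, which is also the vertex set of $\Delta_{\rm comm}(G)$ (again because $Z(G)=\{1\}$). By the observation preceding the statement, whenever $[x,y]=1$ we have $x\leftrightarrow y$ in $\vec\Gamma_{\rm norm}(G)$; hence every edge of $\Delta_{\rm comm}(G)$ is a bidirectional edge of $\vec\Delta_{\rm norm}(G)$, and any path in $\Delta_{\rm comm}(G)$ lifts to a directed path (traversable in both directions, through vertices all lying in $G\setminus\{1\}$) of the same length in $\vec\Delta_{\rm norm}(G)$. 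Consequently, once $\Delta_{\rm comm}(G)$ is connected, ${\rm diam}(\vec\Delta_{\rm norm}(G))\le{\rm diam}(\Delta_{\rm comm}(G))$.

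Next I would use the $A$-group hypothesis to identify $\Delta_{\rm comm}(G)$ with the nilpotent graph $\Delta_{\rm nil}(G)$. Every subgroup of an $A$-group is again an $A$-group, and a nilpotent $A$-group is abelian, being the direct product of its abelian Sylow subgroups. Thus, for $x,y\in G$, the subgroup $\langle x,y\rangle$ is nilpotent if and only if it is abelian, i.e.\ if and only if $[x,y]=1$. Since moreover $Z(G)=\{1\}$ forces $Z_{\infty}(G)=\{1\}$, the two graphs have identical vertex sets and identical edge sets, so $\Delta_{\rm comm}(G)=\Delta_{\rm nil}(G)$.

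With these reductions in hand I would split into two cases according to the structure of $G$. Because $\vec\Delta_{\rm norm}(G)$ is strongly connected, Proposition \ref{prop:norm} rules out $G$ being a Frobenius group. If $G$ is a $2$-Frobenius group, then Proposition \ref{prop:2frobconnected} directly yields ${\rm diam}(\vec\Delta_{\rm norm}(G))\le 6$. Otherwise $G$ is neither Frobenius nor $2$-Frobenius, so Theorem A in \cite{DGLM} guarantees that $\Delta_{\rm nil}(G)$ is connected; by the previous paragraph $\Delta_{\rm comm}(G)$ is then connected as well, and the first paragraph reduces the claim to the inequality ${\rm diam}(\Delta_{\rm comm}(G))\le 6$.

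The main obstacle is precisely this last diameter bound for the commuting graph of a connected soluble $A$-group with trivial centre, which I would derive from the commuting-graph analysis of \cite{parker} specialised to $A$-groups. The useful structural input is that the Fitting subgroup $F=F(G)$, being nilpotent and an $A$-group, is abelian, while solubility forces $C_G(F)=F$; thus $F$ is a self-centralising abelian normal subgroup and $F\setminus\{1\}$ is a clique of $\Delta_{\rm comm}(G)$. The crux is then to bound the distance of an arbitrary vertex from $F\setminus\{1\}$: an element centralising some nontrivial element of $F$ sits at distance $1$, so the delicate point is to control elements acting fixed-point-freely on $F$, routing each of them to $F\setminus\{1\}$ through a bounded detour whose existence is exactly what the trivial-centre, non-Frobenius and non-$2$-Frobenius hypotheses secure. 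Assembling these distance estimates through the clique $F\setminus\{1\}$ produces the bound $6$, and lifting the resulting commuting-graph paths to bidirectional paths in $\vec\Delta_{\rm norm}(G)$ completes the argument.
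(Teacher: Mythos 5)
Your reductions are correct and in fact reproduce the paper's own skeleton: strong connectivity rules out the Frobenius case by Proposition \ref{prop:norm}; the $2$-Frobenius case is settled by Proposition \ref{prop:2frobconnected}; commuting edges are bidirectional edges of $\vec\Delta_{\rm norm}(G)$, so in the remaining case it suffices to bound the diameter of the commuting graph; and your identification $\Delta_{\rm comm}(G)=\Delta_{\rm nil}(G)$ for $A$-groups (a nilpotent $A$-group is abelian) is valid, as is the appeal to Theorem A of \cite{DGLM} for connectivity. The gap is in the last, decisive step: the inequality ${\rm diam}(\Delta_{\rm comm}(G))\leq 6$ for a finite soluble $A$-group with trivial center that is neither Frobenius nor $2$-Frobenius is never actually proved. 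What you offer is a plan --- $F={\rm Fit}(G)$ is abelian and self-centralising, hence a clique, and elements acting fixed-point-freely on $F$ should be ``routed'' back to $F$ by a ``bounded detour'' --- but no argument is given that such a detour exists, let alone that the resulting distances assemble to $6$ rather than some other number. This is not a routine specialisation: Parker's theorem (\cite{parker}, Theorem 1.1), which you propose to specialise, gives only diameter at most $8$ for soluble groups with trivial center, and sharpening $8$ to $6$ under the abelian-Sylow hypothesis is the main content of a separate paper of Carleton and Lewis, which is exactly what the paper invokes at this point (\cite[Theorem 1.1]{CL-agrp}). Connectivity of $\Delta_{\rm comm}(G)$, which is all that \cite{DGLM} provides, says nothing about its diameter.

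In short, everything up to the final step matches the paper's proof, but the step carrying all the content --- the diameter-$6$ bound for the commuting graph of a soluble $A$-group --- is asserted in outline only. Unless you either cite the Carleton--Lewis theorem or supply the full analysis of fixed-point-free elements (which is the substance of their paper), the argument is incomplete.
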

\begin{proof}
Since $\vec \Delta_{\rm norm} (G)$ is strongly connected, $G$ is not Frobenius. If $G$ is $2$-Frobenius, then the result follows by Proposition \ref{prop:2frobconnected}. If $G$ is not $2$-Frobenius, then $\Gamma_{\rm comm}(G)$ is connected of diameter at most $6$ by   \cite[Theorem 1.1]{CL-agrp}. Therefore $\vec \Delta_{\rm norm} (G)$ is strongly connected too, and the result follows.
\end{proof}

We point out that Theorem 1.1 of \cite{parker} provides a general bound on the diameter of $\vec\Delta_{\rm norm}(G)$ for a soluble group $G$ with trivial center.

\begin{theorem}
Let $G$ be a finite soluble group with trivial center. If $\vec\Delta_{\rm norm}(G)$ is strongly connected then the diameter of $\vec\Delta_{\rm norm}(G)$ is at most $8$.
\end{theorem}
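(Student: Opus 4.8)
The plan is to leverage the main result of \cite{parker}, which bounds the diameter of the commuting graph of a finite soluble group with trivial center, and to transfer this bound to $\vec\Delta_{\rm norm}(G)$ using the observation recorded just before the statement: whenever $x$ and $y$ commute and are both non-central, we have $x \leftrightarrow y$ in $\vec\Gamma_{\rm norm}(G)$, so every edge of the commuting graph $\Delta_{\rm comm}(G)$ is a bidirectional edge of $\vec\Delta_{\rm norm}(G)$. Since $G$ has trivial center, Corollary \ref{cor:norm:A} gives $\Univ(G)=\{1\}$, so the vertex set of $\vec\Delta_{\rm norm}(G)$ is exactly $G\setminus\{1\}$, which is also the vertex set of $\Delta_{\rm comm}(G)$. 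Hence $\Delta_{\rm comm}(G)$ is a spanning subgraph of the underlying undirected graph of $\vec\Delta_{\rm norm}(G)$, and crucially each commuting-graph edge lifts to a pair of opposite arcs.

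The first step is to recall precisely what \cite{parker} asserts: for a finite soluble group $G$ with $Z(G)=\{1\}$, if the commuting graph $\Delta_{\rm comm}(G)$ is connected then its diameter is at most $8$. I would then argue that $\Delta_{\rm comm}(G)$ is necessarily connected here. Indeed, if it were disconnected, then by the structure theory used earlier (via the connection with $\Delta_{\rm nil}(G)$ in Theorem \ref{thm:nil-norm} and Corollary \ref{cor:norm}, together with the Frobenius/$2$-Frobenius dichotomy) the graph $\vec\Delta_{\rm norm}(G)$ would itself be strongly disconnected, contradicting our hypothesis. More directly, any commuting edge is a bidirectional normalizing edge, so strong connectivity of $\vec\Delta_{\rm norm}(G)$ combined with the commuting-graph bound is what we exploit; the cleanest route is to assume $\vec\Delta_{\rm norm}(G)$ strongly connected and deduce that $\Delta_{\rm comm}(G)$ is connected, since a strongly disconnected normalizing graph would follow from a disconnected commuting graph by the earlier results.

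The second step is the transfer itself. Given any two distinct vertices $x,y\in G\setminus\{1\}$, take a shortest path $x=v_0,v_1,\dots,v_\ell=y$ in $\Delta_{\rm comm}(G)$ with $\ell \le 8$. Each consecutive pair $v_{i},v_{i+1}$ commutes, hence $v_i \leftrightarrow v_{i+1}$ in $\vec\Gamma_{\rm norm}(G)$, and since none of the $v_i$ is the identity they all survive in $\vec\Delta_{\rm norm}(G)$. Reading the arcs forward gives a directed path $x \to v_1 \to \dots \to y$ of length at most $8$, and reading them backward gives a directed path $y \to \dots \to x$ of the same length. Therefore $\vec d(x,y)\le 8$ and $\vec d(y,x)\le 8$, whence ${\rm diam}(\vec\Delta_{\rm norm}(G))\le 8$.

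The main obstacle I anticipate is not the transfer of paths, which is essentially formal, but verifying that the commuting graph is genuinely connected under our hypothesis and confirming that the bound of $8$ from \cite{parker} applies to exactly this class of groups without further restriction. One must be careful that \cite{parker} may phrase its result for $\Gamma_{\rm comm}(G)$ rather than allow exceptional soluble groups; I would check that the only possible exceptions (if any) are precisely the Frobenius and $2$-Frobenius groups already classified in Theorem B as yielding strong disconnection, so that under the strong-connectivity hypothesis no exceptional case arises and the value $8$ is the correct uniform bound.
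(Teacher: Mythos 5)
Your path-transfer step (commuting edges become bidirectional normalizing arcs, so $\mathrm{diam}(\vec\Delta_{\rm norm}(G))\leq \mathrm{diam}(\Delta_{\rm comm}(G))$ whenever the latter is connected) is correct and is exactly the final step of the paper's proof. The gap is in your reduction: you claim that strong connectivity of $\vec\Delta_{\rm norm}(G)$ forces $\Delta_{\rm comm}(G)$ to be connected, ``since a strongly disconnected normalizing graph would follow from a disconnected commuting graph by the earlier results.'' No such implication exists in the paper, and it is in fact false. The normalizing graph has strictly more arcs than the commuting graph, so it can perfectly well be strongly connected while the commuting graph is disconnected. Concretely: by Parker's theorem the commuting graph of a soluble group with trivial center is disconnected exactly when $G$ is Frobenius or $2$-Frobenius, but by Theorem B (equivalently Proposition \ref{prop:normcaratt}) a $2$-Frobenius group has strongly \emph{dis}connected normalizing graph only under the extra arithmetic condition $p\nmid r-1$ for all $p\in\pi(H)$, $r\in\pi(K)$. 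Whenever that condition fails --- e.g.\ $G=(C_7\times C_7)\rtimes S_3$ with $C_3$ acting fixed-point-freely on $C_7\times C_7$, where $3\mid 7-1$ --- the group is $2$-Frobenius, so $\Delta_{\rm comm}(G)$ is disconnected, yet $\vec\Delta_{\rm norm}(G)$ is strongly connected. For precisely these groups your argument produces no bound at all: there is no commuting path to transfer. Your anticipated ``check'' in the last paragraph, that the exceptions to Parker's connectivity coincide with the strong-disconnection cases of Theorem B, is exactly the point that fails.

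The missing ingredient is the paper's Proposition \ref{prop:2frobconnected}: for a $2$-Frobenius group whose normalizing graph is strongly connected, $\mathrm{diam}(\vec\Delta_{\rm norm}(G))\leq 6$, proved by direct structural arguments (paths routed through $Z(K)$, using Lemma \ref{lem:normcentro}, Corollary \ref{cor:normcentro} and Proposition \ref{prop:normcomp}) rather than through the commuting graph. The paper's proof is then a three-way case split: $G$ is not Frobenius (Proposition \ref{prop:norm}); if $G$ is $2$-Frobenius, invoke Proposition \ref{prop:2frobconnected}; only if $G$ is neither does Parker's theorem apply to give a connected commuting graph of diameter at most $8$, after which the transfer step you describe finishes the argument. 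Without an analogue of Proposition \ref{prop:2frobconnected}, your proof covers only the third case.
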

\begin{proof}
Since $\vec\Delta_{\rm norm}(G)$ is strongly connected, $G$ is not Frobenius by Proposition~\ref{prop:norm}. If $G$ is 2-Frobenius, then $diam(\vec\Delta_{\rm norm}(G)) \leq 6$ by Proposition \ref{prop:2frobconnected}. Now assume that $G$ is not 2-Frobenius. Then $\Gamma_{\rm comm}(G)$ is strongly connected and $\mathrm{diam}(\Gamma_{\rm comm}(G)) \leq 8$ by Theorem 1.1 \cite{parker}. Then $\mathrm{diam}(\vec\Delta_{\rm norm}(G))\leq \mathrm{diam}(\Gamma_{\rm comm}(G))$ and the result follows.
\end{proof}

It is still unknown whether this bound is sharp. However, we point out that in \cite{FarrellParker2025} Farrell and Parker provided an example of a soluble group $G$ for which $\Delta_{\rm norm}(G)$ is connected of diameter $6$. Since $G$ is neither a Frobenius group nor a $2$-Frobenius group,  then $\vec \Delta_{\rm norm}(G)$ is strongly connected and therefore its diameter is at least $6$.

We investigate now the case in which $\vec \Delta_{\rm norm} (G)$ is strongly disconnected, finding the number of strongly connected components and a bound on their diameters.

\begin{thm}
Let $G$ be a finite soluble group with trivial center and suppose that $\vec \Delta_{\rm norm} (G)$ is strongly disconnected.  Then the number of strongly connected components is $|\mathrm{Fit}(G)|+1$; moreover, one strongly connected component has diameter at most $6$ and all other strongly connected components have diameters at most $2$.
\end{thm}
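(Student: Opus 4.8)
The plan is to apply Theorem B, which leaves exactly two possibilities for a strongly disconnected $\vec\Delta_{\rm norm}(G)$, and to describe the strongly connected components explicitly in each. Throughout, $Z(G)=\{1\}$ gives $\Univ(G)=\{1\}$ by Corollary \ref{cor:norm:A}, so the vertex set is $G\setminus\{1\}$. A preliminary step common to both cases is the identification $\mathrm{Fit}(G)=K$: for a Frobenius group $K\rtimes H$ a normal subgroup strictly between $K$ and $G$ would make $\mathrm{Fit}(G)$ a nilpotent group containing a nontrivial Frobenius subgroup, which is impossible; for a $2$-Frobenius group one passes to $G/K$, whose Fitting subgroup is $KH/K$ by the Frobenius case, to deduce $\mathrm{Fit}(G)\le KH$ and then $\mathrm{Fit}(G)\le\mathrm{Fit}(KH)=K$.

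Suppose first that $G=K\rtimes H$ is a Frobenius group. By the proof of Proposition \ref{prop:norm}, for every $h\in H\setminus\{1\}$ one has $N_G(\langle h\rangle)\subseteq H$ (Lemma \ref{lem:norm1}), so all out-neighbours of $h$ lie in $H$; conjugating, no directed path can leave a conjugate of $H$, and hence the strongly connected component of a vertex in $H^g\setminus\{1\}$ is contained in $H^g\setminus\{1\}$. On the other hand $K$ is nilpotent with $Z(K)\neq\{1\}$, so $k_1\leftrightarrow z\leftrightarrow k_2$ for any $k_1,k_2\in K\setminus\{1\}$ and $z\in Z(K)\setminus\{1\}$; thus $K\setminus\{1\}$ is a single component of diameter at most $2$. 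To see that each $H^g\setminus\{1\}$ is also a single component of diameter at most $2$, I will use that a Frobenius complement $H$ has nontrivial centre: it possesses a unique, hence normal, subgroup $\langle x_p\rangle$ of order $p$ for every prime $p\mid|H|$, and taking $p$ least forces $H/C_H(\langle x_p\rangle)\hookrightarrow \Aut(\langle x_p\rangle)\cong C_{p-1}$ to be trivial (its order divides both $|H|$ and $p-1$, hence is $1$), so $\langle x_p\rangle\le Z(H)$. A nontrivial central element of $H^g$ then joins any two of its vertices in at most two steps. Since $N_G(H)=H$, there are $[G:H]=|K|$ conjugates, giving $|K|+1=|\mathrm{Fit}(G)|+1$ components, all of diameter at most $2$.

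Now suppose $G$ is a $2$-Frobenius group as in Theorem B$(ii)$. Here $X=\bigcup_{g\in G}H^g\subseteq KH$ is a source set by the proof of Proposition \ref{prop:normcaratt}, so a path that leaves $X$ never returns; consequently the component of any vertex of $G\setminus X$ is contained in $G\setminus X$, and Proposition \ref{prop:normcomp}$(ii)$ shows $G\setminus X$ is a single component. Re-running the connecting path of Proposition \ref{prop:normcomp}$(ii)$ entirely inside $G\setminus X$ — routing through an element $g_{[p]}\in G\setminus KH$ and a central element $z\in Z(K)$ as $g\leftrightarrow g_{[p]}\leftrightarrow k\leftrightarrow z\leftrightarrow k'\leftrightarrow g'_{[p]}\leftrightarrow g'$ — bounds its diameter by $6$. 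For the remaining vertices, the Frobenius structure of $KH$ gives the disjoint decomposition $X\setminus\{1\}=\bigsqcup_{k\in K}\bigl(H^k\setminus\{1\}\bigr)$ into exactly $|K|$ pieces, since distinct complements intersect trivially and $N_K(H)=\{1\}$. As $H$ is cyclic by Lemma \ref{lem:norm2}$(i)$, each $H^k\setminus\{1\}$ is a complete bidirectional subgraph, hence a component of diameter at most $1$; and distinct pieces remain separate because any within-$X$ out-edge from $h\in H^k$ has target in $N_G(\langle h\rangle)\cap KH=N_{KH}(\langle h\rangle)\subseteq H^k$ by Lemma \ref{lem:norm1}. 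This produces $|K|+1=|\mathrm{Fit}(G)|+1$ components, one of diameter at most $6$ and all others of diameter at most $2$.

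The main obstacle is the internal analysis of the source parts, namely proving that every conjugate of $H$ forms a single strongly connected component of small diameter. In the Frobenius case this hinges on the nontriviality of the centre of a Frobenius complement, and in both cases on using Lemma \ref{lem:norm1} to confine within-$X$ out-edges to a single complement, so that the $|K|$ source components neither merge with one another nor with the main component; once this is in place, matching the count to $|\mathrm{Fit}(G)|+1$ reduces to the identification $\mathrm{Fit}(G)=K$ together with the enumeration of the conjugates of $H$.
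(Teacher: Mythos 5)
Your proposal is correct and follows essentially the same route as the paper's proof: split into the Frobenius and $2$-Frobenius cases via Theorem B, identify the strongly connected components as $K\setminus\{1\}$ (resp.\ $G\setminus X$) together with the $|K|$ conjugates of $H$, and bound the diameters using Lemma \ref{lem:norm1}, Proposition \ref{prop:normcomp}, cyclicity of $H$, and the nontrivial centers of the kernel and the complement. You are in places more explicit than the paper---you justify $\mathrm{Fit}(G)=K$, the nontriviality of $Z(H)$ for a Frobenius complement, the exact count of conjugates, and you derive ``no edges into $X$'' from the source-set argument inside Proposition \ref{prop:normcaratt} rather than from the paper's terser reasoning---but the underlying strategy is identical.
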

\begin{proof}
    By Theorem B, $G$ is a Frobenius or a $2$-Frobenius group. Suppose first that $G=K H$ is a Frobenius group. Then no arrows goes from a vertex in $K$ to a vertex in any $H^g$ and by Lemma \ref{lem:norm1} no arrows goes form a vertex in any $H^g$ to a vertex in $K$. Moreover, $K$ is nilpotent and therefore its center is nontrivial, thus the subgraph induced by $K$ is a strongly connected component of $G$. By Lemma \ref{lem:norm1} and the fact that the center of a Frobenius complement is nontrivial it follows that any of the conjugates of $H$ gives rise to a strongly connected component. Finally, since $K=\mathrm{Fit}(G)$, there are $|\mathrm{Fit}(G)|+1$ strongly connected components of diameter at most $2$. Now suppose that $G$ is a $2$-Frobenius group as in Definition \ref{2-Frob}. 
    By Proposition~\ref{prop:normcomp} the subgraph induced by $G\setminus X$ lies in a strongly connected component and there exists a path from any vertex in $X$ to some $g \in G \setminus X$. Therefore, there is no arrow from any element of $G \setminus X$ to $X\setminus\{1\}$, as $\vec \Delta_{\rm norm} (G)$ is strongly disconnected.
    Thus the subgraph induced by $G\setminus X$ coincides with a strongly connected component. One can easily see that any conjugate of $H$ is a strongly connected component of diameter 1. 
    As $K=\mathrm{Fit}(G)$, there are exactly  $|\mathrm{Fit}(G)|+1$ strongly connected components. Using Proposition~\ref{prop:normcomp} $(i)$ there is a path from any element of $G \setminus HK$ to any vertex in $Z(K)$ of length at most 3. Thus, we can connect any two elements of $G\setminus HK$ in at most 6 steps.
\end{proof}

We point out that Theorem \ref{thm:normcompl} also characterizes when $\vec \Gamma_{\rm norm} (G)$ is strongly connected of diameter $1$. It is not difficult to describe also when $\vec \Delta_{\rm norm} (G)$ is strongly disconnected with all its strongly connected components of diameter $1$, in the case in which $G$ is a soluble group with trivial center.

\begin{proposition}
    Let $G$ be a finite soluble group with trivial center. If $\vec \Delta_{\rm norm} (G)$ is strongly disconnected with strongly connected components of diameter $1$, then  $G$ is a Frobenius group with Dedekind Frobenius kernel and Dedekind Frobenius complement.
\end{proposition}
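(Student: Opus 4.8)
The plan is to combine Theorem B with Theorem \ref{thm:normcompl}, after eliminating the $2$-Frobenius alternative. Since $G$ is finite soluble with trivial center and $\vec\Delta_{\rm norm}(G)$ is strongly disconnected, Theorem B says that $G$ is either a Frobenius group or a $2$-Frobenius group as in Definition \ref{2-Frob}. The first and main task is to show that a $2$-Frobenius group cannot have all of its strongly connected components of diameter $1$.

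To rule out the $2$-Frobenius case I would exhibit two distinct, non-adjacent vertices lying in a common component, which forces that component to have diameter at least $2$. By Proposition \ref{prop:normcomp} $(ii)$ the set $G\setminus X$ lies in a single strongly connected component, and since $X\subseteq HK$ by Lemma \ref{lem:norm2} $(iii)$, the same component contains all of $G\setminus HK$. I would produce the pair inside $G\setminus HK$ by passing to the Frobenius quotient $G/K$, which has kernel $HK/K$ and complement $L/K$. Choose $\bar g_1\in (L/K)\setminus\{1\}$ and a nontrivial kernel element $\bar t\in (HK/K)\setminus\{1\}$, and set $\bar g_2=\bar g_1^{\bar t}$. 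Standard Frobenius facts give $\bar g_2\neq\bar g_1$ and $\bar g_2\in (L/K)^{\bar t}$, a complement distinct from $L/K$ and meeting it trivially; hence, applying Lemma \ref{lem:norm1} in $G/K$, the element $\bar g_2$ does not normalize $\langle\bar g_1\rangle$. Lifting to preimages $g_1,g_2\in G\setminus HK$ and projecting the relation $g_1^{g_2}\in\langle g_1\rangle$ back to $G/K$ would then contradict this, so $g_1\not\to g_2$; thus $\vec d(g_1,g_2)\geq 2$ within the $G\setminus X$ component, contradicting the diameter-$1$ hypothesis. Therefore $G$ is a Frobenius group.

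It remains to read off the structure of $K$ and $H$. Since $Z(G)=\{1\}$, Corollary \ref{cor:norm:A} gives $\Univ(G)=\{1\}$, so the vertex set of $\vec\Delta_{\rm norm}(G)$ is $G\setminus\{1\}$. As established in the analysis of the strongly disconnected case (Theorem C $(ii)$), for a Frobenius group $G=K\rtimes H$ the strongly connected components are exactly $K\setminus\{1\}$ and the sets $H^g\setminus\{1\}$ ranging over the conjugates of $H$. For two elements lying in a common subgroup, the relation $x\to y$ in $\vec\Gamma_{\rm norm}(G)$ agrees with the corresponding relation inside that subgroup, since $\langle x,y\rangle$ is contained in it; hence the induced subgraph on $K\setminus\{1\}$ is $\vec\Gamma_{\rm norm}(K)$ with the universal vertex $1$ removed, and likewise for $H^g\cong H$. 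The diameter-$1$ hypothesis makes both of these induced subgraphs complete, and since $1$ is adjacent to every vertex, reinstating it shows that $\vec\Gamma_{\rm norm}(K)$ and $\vec\Gamma_{\rm norm}(H)$ are complete. Theorem \ref{thm:normcompl} then forces $K$ and $H$ to be Dedekind, which is the desired conclusion.

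The main obstacle is the $2$-Frobenius elimination: the delicate step is locating a concrete non-adjacent pair inside the large component $G\setminus X$, and I handle it by transporting the question to the quotient $G/K$ through two distinct conjugate complements and invoking Lemma \ref{lem:norm1}; note that only the single non-relation $g_1\not\to g_2$ is needed, so I do not require the symmetric statement. Once $G$ is known to be Frobenius, the rest is a direct application of Theorem \ref{thm:normcompl} to the subgroups $K$ and $H$.
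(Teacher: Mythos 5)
Your proof is correct and takes essentially the same route as the paper's: Theorem B gives the Frobenius/$2$-Frobenius dichotomy, the $2$-Frobenius case is excluded by combining Proposition \ref{prop:normcomp} $(ii)$ with Lemma \ref{lem:norm1} applied in the quotient $G/K$ (the paper phrases this as ``$a \leftrightarrow b$ for all $a,b \in G\setminus X$ forces $aK$ to normalise $\langle bK\rangle$, contradicting Lemma \ref{lem:norm1}''), and Theorem \ref{thm:normcompl} then yields that kernel and complement are Dedekind. The only difference is one of detail: you make explicit the choice of a non-adjacent pair in $G\setminus X$ via two distinct conjugates of the complement of $G/K$, and you spell out the component structure in the Frobenius case, both of which the paper leaves implicit.
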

\begin{proof}

By Theorem B it follows that $G$ is either a Frobenius or a $2$-Frobenius group. Suppose, by contradiction, that $G$ is a $2$-Frobenius group as in Definition \ref{2-Frob}. Then by Proposition \ref{prop:normcomp}, $G\setminus X$ lies in a strongly connected component of $\vec \Delta_{\rm norm} (G)$. Therefore, $a \leftrightarrow b$ for any  $a,b \in G\setminus X$, which implies that $aK$ normalises $\langle bK \rangle$. However this is a contradiction  by Lemma \ref{lem:norm1}.  Therefore $G$ is a Frobenius group, whose complement and kernel are Dedekind by Theorem \ref{thm:normcompl}.
\end{proof}

The bound in Proposition \ref{prop:2frobconnected} can be improved under certain conditions.

\begin{proposition}
    Let $G$ be $2$-Frobenius group 
    as in Definition \ref{2-Frob} and $\vec \Delta_{\rm norm}(G)$ be strongly connected. If $\pi(K)=\pi(L)$ or for any prime $p\in \pi(L)\setminus \pi(K)$ there exists a prime $r \in \pi(K)$ such that $p \mid r-1$ then ${\rm diam}( \vec \Delta_{\rm norm}(G)) \leq 5$.
\end{proposition}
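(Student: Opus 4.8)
The plan is to refine the case analysis carried out in the proof of Proposition \ref{prop:2frobconnected}, where the only ordered pairs whose connecting path had length $6$ were those in which the target (and, in the hardest case, also the source) lies in $G\setminus HK$. In each such case the length $6$ arose because passing from a central vertex $z\in Z(K)$ to an outer vertex $y$ required the three steps $z\leftrightarrow k_1\leftrightarrow y_{[q]}\leftrightarrow y$. The whole improvement rests on replacing this by a path of length $2$, namely $z\to y_{[q]}\leftrightarrow y$, for a suitably chosen $z\in Z(K)$; this is exactly where the arithmetic hypothesis on $\pi(K)$ and $\pi(L)$ enters.

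The key step I would isolate as a lemma is the following: for every $y\in G\setminus HK$ and every prime $q\mid o(yK)$, the power $y_{[q]}$ of order $q$ again lies in $G\setminus HK$ (its image is a nontrivial element of a Frobenius complement of $G/K$, hence not in the kernel $KH/K$), and there exists $z\in Z(K)\setminus\{1\}$ with $z\to y_{[q]}$. Since $q\mid o(yK)$ divides $|L/K|$, we have $q\in\pi(L)$, and the hypothesis yields a dichotomy. If $q\in\pi(K)$, I would run a fixed-point argument: $O_q(K)$ is a nontrivial normal $q$-subgroup of $G$ normalised by $y_{[q]}$, so $Z(O_q(K))\langle y_{[q]}\rangle$ is a $q$-group whose centre meets the nontrivial normal subgroup $Z(O_q(K))\subseteq Z(K)$; any nontrivial such central element $z$ commutes with $y_{[q]}$, giving $z\leftrightarrow y_{[q]}$. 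If instead $q\in\pi(L)\setminus\pi(K)$, then $q\nmid|K|$ forces $\langle y_{[q]}\rangle\cap K=\{1\}$, so $K\langle y_{[q]}\rangle=K\rtimes\langle y_{[q]}\rangle$ satisfies the hypotheses of Lemma \ref{lem:normcentro} with cyclic complement $\langle y_{[q]}\rangle$ of order $q$; since by assumption $q\mid r-1$ for some $r\in\pi(K)$, Lemma \ref{lem:normcentro} produces $z\in Z(K)\setminus\{1\}$ and a generator $h$ of $\langle y_{[q]}\rangle$ with $z\to h$, whence $z\to y_{[q]}$.

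With this lemma in hand I would reassemble the paths. Strong connectivity together with Proposition \ref{prop:normcaratt} guarantees primes $p_0\in\pi(H)$ and $r_0\in\pi(K)$ with $p_0\mid r_0-1$, which via Lemma \ref{lem:normcentro} and Corollary \ref{cor:normcentro} still connect $Z(K)$ to the vertices of $X$ as in Proposition \ref{prop:2frobconnected}; the cases where both endpoints lie in $K\cup X$ are therefore unchanged and already have length at most $5$. For the remaining cases I would use that the source side always reaches $Z(K)$ in three bidirectional steps: if $x\in G\setminus HK$ then $x\leftrightarrow x_{[p]}\leftrightarrow k_x\leftrightarrow z$, where $x_{[p]}$ is an outer element of prime order, $k_x\in C_K(x_{[p]})\setminus\{1\}$ exists by Lemma \ref{lem:norm2}, and $z\in Z(K)$ is arbitrary since it commutes with $k_x$; likewise $x\in X$ reaches $Z(K)$ in three steps by Proposition \ref{prop:normcomp}. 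Choosing $z$ to be precisely the central vertex furnished by the key lemma for the target $y$, I obtain for $x,y\in G\setminus HK$ the length-$5$ path $x\leftrightarrow x_{[p]}\leftrightarrow k_x\leftrightarrow z\to y_{[q]}\leftrightarrow y$, and analogously for $x\in X$, $y\in G\setminus HK$ the path $x\to g_{[p]}\to k\to z\to y_{[q]}\leftrightarrow y$. Both realise $\vec d(x,y)\le 5$, and by symmetry the reverse directions are handled the same way; hence $\mathrm{diam}(\vec\Delta_{\rm norm}(G))\le 5$.

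The main obstacle is the key lemma, and within it the case $q\in\pi(K)$: one must ensure that the central element produced actually lies in $Z(K)$ (not merely in the centre of one Sylow subgroup) and commutes with $y_{[q]}$, which relies on the decomposition $Z(K)=\prod_{r}Z(O_r(K))$ valid because $K$ is nilpotent. A secondary technical point, needed before the dichotomy can be applied, is to confirm that $y_{[q]}$ is genuinely outer and that $q\in\pi(L)$; both follow from the Frobenius structure of $G/K$, but they must be checked carefully so that the two alternatives of the hypothesis are jointly exhaustive.
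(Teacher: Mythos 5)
Your proposal is correct and follows essentially the same route as the paper: the paper likewise reduces the problem to showing that every $g \in G\setminus HK$ can be reached from some $z \in Z(K)\setminus\{1\}$ in at most two steps, and resolves this by the same dichotomy you use, namely a prime shared with $|K|$ handled by a centre/fixed-point argument inside a $q$-group lying over $Z(O_q(K))\leq Z(K)$, and the coprime case handled by applying Lemma \ref{lem:normcentro} to $K\langle g\rangle$. One small caveat: your side-claim that $y_{[q]}\in G\setminus HK$ whenever $q \mid o(yK)$ is false when $q\in\pi(K)$ (in $S_4$, viewed as a $2$-Frobenius group with $K=V_4$, a $4$-cycle $y$ has $y_{[2]}=y^2\in K$), but since neither arc of your path $z\to y_{[q]}\to y$ actually requires $y_{[q]}$ to be outer, this does not damage the argument.
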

\begin{proof}
Following the proof of Proposition \ref{prop:2frobconnected}, we only need to show that $x,y$ are at directed distance at most $5$ when $x\in X$ and $y \in G\setminus HK$ and when $x,y \in G\setminus HK$. In both cases it suffices to prove that for any $g \in G\setminus HK$ there exists $z \in Z(K)$ such that $z$ reaches $g$ in at most $2$ steps.

Let $g \in G\setminus HK$. If there exists a prime $p$ such that $p \mid o(g)$ and $p \mid |K|$, then for any Sylow $p$-subgroup  $P$ of $G$ containing $g_{[p]}$
we have $z \leftrightarrow g_{[p]} \leftrightarrow g$, where $z$ is a nontrivial element of $Z(P) \cap Z(O_p(G)) \leq Z(K)$.

Assume now that $(o(g), |K|)=1$. By hypothesis, if $p$ is a prime dividing $o(g)$ then there exists a prime $r$ such that $p \mid r-1$. Consider $K\langle g \rangle $. By Lemma \ref{lem:normcentro}, there exists an element $z \in Z(K)\setminus\{1\}$ such that $z$ reaches $g$ in at most $2$ steps. This concludes the proof.

\end{proof}

In the following, we show that the upper bound on the diameter of $\vec \Delta_{\rm norm} (G)$ can be improved for some classes of groups.
Recall that a group $G$ is a cyclic-by-abelian if $G$ has a cyclic normal subgroup $N$ such that the quotient $G/N$ is abelian.

\begin{proposition}\label{prop:norm:cyclic-by-abelian}
   Let $G$ be a finite cyclic-by-abelian group with trivial center and $\vec \Delta_{\rm norm} (G)$ strongly connected. Then ${\rm diam}(\vec \Delta_{\rm norm} (G)) \leq 4$.
\end{proposition}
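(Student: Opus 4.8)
The plan is to exploit the cyclic normal subgroup as a universal ``hub''. Since $Z(G)=\{1\}$, Corollary \ref{cor:norm:A} gives $\Univ(G)=\{1\}$, so the vertex set of $\vec\Delta_{\rm norm}(G)$ is $G\setminus\{1\}$. Being cyclic-by-abelian is equivalent to $G'$ being cyclic, so I would set $N:=G'$, which is cyclic, normal, and satisfies $G/N$ abelian. Every subgroup of the cyclic group $N$ is characteristic in $N$, hence normal in $G$; therefore each $n\in N\setminus\{1\}$ lies in $\Unim(G)$ and satisfies $n\to g$ for all $g\in G$. Consequently a single arrow takes any nontrivial element of $N$ to an arbitrary target, and the whole statement reduces to proving that every $x\in G\setminus\{1\}$ reaches $N\setminus\{1\}$ along a directed path of length at most $3$.

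To reach $N$, recall that $x\to w$ exactly when $w\in N_G(\langle x\rangle)$, and that $w\to n$ for some $n\in N\setminus\{1\}$ as soon as some nontrivial element of $N$ normalizes $\langle w\rangle$ (in particular when $C_N(w)\neq\{1\}$ or $\langle w\rangle\cap N\neq\{1\}$). I would then split according to the conjugation action of $x$ on $N$, which is an endomorphism of $N$ with kernel $C_N(x)$. If $x$ centralizes $N$ then $N\subseteq C_G(x)\subseteq N_G(\langle x\rangle)$, giving $x\to n$ directly (length $1$). If some nontrivial power $x^i$ centralizes a nontrivial primary component of $N$, then $x\to x^i\to n$ for a suitable $n$ in that component (length $2$, using that powers of $x$ commute with $x$). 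These cover every situation except the one in which $\langle x\rangle$ acts fixed-point-freely on $N$, that is, $N\rtimes\langle x\rangle$ is a Frobenius group with kernel $N$ and complement $\langle x\rangle$.

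The Frobenius-action case is where I expect the real difficulty, and two structural facts are available to attack it. First, $G$ is not Frobenius, because $\vec\Delta_{\rm norm}(G)$ is strongly connected (Proposition \ref{prop:norm}). Second, a cyclic-by-abelian group is never $2$-Frobenius: in the notation of Definition \ref{2-Frob} one has $K\subseteq (KH)'\subseteq G'$, and more precisely $G'\supseteq KH$, so $G'$ would contain the non-abelian Frobenius group $KH$, contradicting the fact that $G'=N$ is cyclic. Thus $G$ is neither Frobenius nor $2$-Frobenius. I would then analyse $D:=C_G(N)\unlhd G$: since $\Aut(N)$ is abelian, $G/D$ is abelian, whence $G'=N\subseteq D$ and therefore $N\subseteq Z(D)$. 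If $C_D(x)\neq\{1\}$, any $1\neq d\in C_D(x)$ satisfies $x\to d$ (as $d$ commutes with $x$) and $d\to n$ for every $n\in N$ (as $d$ centralizes $N$), giving a path of length $2$.

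The genuinely hard subcase is $C_D(x)=\{1\}$, i.e.\ $x$ acts fixed-point-freely on $D\supseteq N$, so that $D\rtimes\langle x\rangle$ is again Frobenius. Since $G$ is not Frobenius there exists $g\in G$ with $C_N(g)\neq\{1\}$; the crux is to connect $x$ to such a $g$ (equivalently, to the set of elements that reach $N$ in one step) by a directed path of length at most $2$, using the abelian quotient $G/N$ together with the reciprocal arrows produced by commuting elements. Establishing this last linking step — and hence $\vec d(x,N\setminus\{1\})\le 3$ for every $x$ — is the main obstacle; once it is in place, appending one universal arrow from $N$ to the prescribed target yields ${\rm diam}(\vec\Delta_{\rm norm}(G))\le 4$.
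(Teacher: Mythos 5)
Your overall strategy coincides with the paper's: since every nontrivial element of the cyclic normal subgroup $N$ generates a normal subgroup of $G$, one arrow leaves $N\setminus\{1\}$ to any target, so everything reduces to showing that each $x\in G\setminus\{1\}$ reaches $N\setminus\{1\}$ in at most $3$ steps. Your easy cases (some nontrivial power of $x$ centralizing a nontrivial element of $N$; $C_D(x)\neq\{1\}$ for $D=C_G(N)$) are handled correctly, and your side observations (cyclic-by-abelian groups are never $2$-Frobenius; $D$ is nilpotent of class at most $2$) are true. But the proposal stops exactly at the case that carries the real content, and that case is \emph{not} vacuous. Take $G=(C_5\rtimes C_4)\times(C_{11}\rtimes C_{10})$ with both factors Frobenius and faithful complements: then $Z(G)=\{1\}$, $N=G'\cong C_{55}$ is cyclic, $G/N$ is abelian, and $G$ is neither Frobenius (it is a nontrivial direct product) nor $2$-Frobenius, so $\vec\Delta_{\rm norm}(G)$ is strongly connected. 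Here $D=C_G(N)=N$, and the element $x=(\sigma,\tau)$, where $\sigma,\tau$ are the involutions of the two complements, inverts $N$; hence $C_D(x)=\{1\}$ and the only nontrivial power of $x$ is $x$ itself, which centralizes nothing in $N$. None of your three cases applies to this $x$, so the step you defer --- connecting such an $x$ in at most $2$ steps to an element $g$ with $C_N(g)\neq\{1\}$ --- is precisely what remains to be proven; acknowledging it as ``the main obstacle'' leaves the proof incomplete.

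The paper closes this case with a Sylow--Frattini argument that your centralizer-based splitting never produces. Splitting instead on whether $(o(g),|N|)=1$, in the coprime case one picks a prime $p\mid o(g)$ and a Sylow $p$-subgroup $P$ containing $g_p$; since $G/N$ is abelian, $PN\trianglelefteq G$, and the Frattini argument gives $G=NN_G(P)$. If $N\cap N_G(P)=\{1\}$, then $G=N\rtimes N_G(P)$ with $N_G(P)\cong G/N$ abelian, and non-Frobeniusness (Proposition \ref{prop:norm}) supplies $1\neq t\in N_G(P)$ and $1\neq h\in N$ with $[t,h]=1$. The point you were missing is that $g_p$ itself lies in the abelian group $N_G(P)$, hence commutes with $t$, yielding $g\to g_p\to t\to h\in N$; in the example above this is the path $x\to(\text{a generator of }C_4)\to h$ with $h\in C_{11}$. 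If instead $N\cap N_G(P)\neq\{1\}$, any $1\neq u\in N\cap N_G(P)$ lies together with $P$ in ${\rm Fit}(N_G(P))$, so $\langle g_p,u\rangle$ is nilpotent and $g\to g_p\to z\to u$ for any $z\in Z(\langle g_p,u\rangle)\setminus\{1\}$; in the non-coprime case one routes through $Z(P)$ directly. This linking mechanism, not the structure of $C_G(N)$, is what makes the bound $4$ work.
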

\begin{proof}
   Let $N=\langle c \rangle$ be the cyclic normal subgroup of $G$ such that $G/N$ is abelian and let $g \in G$. Since $N$ is cyclic, every subgroup of $N$ is normal in $G$ and so $N \subseteq \Unim(G)$. Thus, it suffices to show that $g$ can reach an element of $N\setminus\{1\}$ in at most $3$ steps.
 If $g\in N$ we are done, thus suppose that $g \not \in N$.
 
   If $(|N|,o(g))\neq 1$, take a prime $p$ dividing both $|N|$ and $o(g)$. Thus, there exists a Sylow $p$-subgroup $P$  of $G$ containing $g_p$. Now, for a $z \in Z(P)$ and a suitable positive integer $m$ we have $g \to g_p \to z \to c^m$, with $1 \neq c^m \in N \cap P$.
   
 Now assume $(|N|,o(g)) = 1$ and let $p$ be a prime dividing $o(g)$. Then, there exists a Sylow $p$-subgroup $P$ of $G$ containing $g_p$. Since $PN/N$ is normal in $G/N$,  $PN$ is normal in $G$. Thus, by Frattini's argument $G=NN_G(P)$. If $N \cap N_G(P)=\{1\}$ then $N_G(P)$ is abelian. Moreover, since $\vec \Delta_{\rm norm} (G)$ is strongly connected, by Proposition \ref{prop:norm} it follows that $G$ is not a Frobenius group, and so there exists $h \in N$ such that $C_{N_G(P)}(h)\neq \{1\}$. Let $t \in C_{N_G(P)}(h)\setminus\{1\}$. Thus, we have $g \to g_p \to t \to h$. Now assume $N \cap N_G(P)\neq \{1\}$ and $1 \neq u \in N \cap N_G(P)$. Since $N \cap N_G(P)$ is normal in $N_G (P)$, we have $N \cap N_G(P) \subseteq {\rm Fit} (N_G(P))$.  Moreover, $P \subseteq {\rm Fit} (N_G(P))$, so $\langle g_p, u \rangle$ is nilpotent.  Thus we have $g \to g_p \to z \to u$, where $z$ is any nontrivial element in $Z(\langle g_p, u \rangle)$.
\end{proof}

\begin{corollary}
    Let $G$ be a finite soluble group with trivial center and $\vec \Delta_{\rm norm} (G)$ strongly connected. If ${\rm Fit}(G)$ is cyclic then ${\rm diam}(\vec \Delta_{\rm norm} (G)) \leq 4$.
\end{corollary}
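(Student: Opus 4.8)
The plan is to reduce the statement to Proposition \ref{prop:norm:cyclic-by-abelian}, by showing that the hypothesis that $\mathrm{Fit}(G)$ is cyclic forces $G$ to be cyclic-by-abelian; once this is established, the bound $\mathrm{diam}(\vec\Delta_{\rm norm}(G))\leq 4$ is immediate from that proposition. So the entire task is the structural reduction, and no new graph-theoretic argument is needed.

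The key ingredient I would invoke is the standard fact that in a finite soluble group the Fitting subgroup is self-centralizing, that is $C_G(\mathrm{Fit}(G))\subseteq \mathrm{Fit}(G)$. (This is where solubility is used essentially: the inclusion can fail for non-soluble groups, and it is precisely what guarantees that $G/\mathrm{Fit}(G)$ acts faithfully on $\mathrm{Fit}(G)$.) Since by assumption $\mathrm{Fit}(G)$ is cyclic, it is in particular abelian, so conversely $\mathrm{Fit}(G)\subseteq C_G(\mathrm{Fit}(G))$; combining the two inclusions yields the equality $C_G(\mathrm{Fit}(G))=\mathrm{Fit}(G)$.

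With this equality in hand, I would observe that $G$ acts on $\mathrm{Fit}(G)$ by conjugation, inducing a homomorphism $G\to \mathrm{Aut}(\mathrm{Fit}(G))$ whose kernel is exactly $C_G(\mathrm{Fit}(G))=\mathrm{Fit}(G)$. Hence $G/\mathrm{Fit}(G)$ embeds into $\mathrm{Aut}(\mathrm{Fit}(G))$. As $\mathrm{Fit}(G)$ is cyclic, its automorphism group is abelian, and therefore so is the subgroup $G/\mathrm{Fit}(G)$. Consequently $\mathrm{Fit}(G)$ is a cyclic normal subgroup of $G$ with abelian quotient, which is precisely the definition of a cyclic-by-abelian group.

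Thus $G$ is a finite cyclic-by-abelian group with trivial center whose graph $\vec\Delta_{\rm norm}(G)$ is strongly connected, and Proposition \ref{prop:norm:cyclic-by-abelian} applies verbatim to give $\mathrm{diam}(\vec\Delta_{\rm norm}(G))\leq 4$. I do not expect any genuine obstacle here; the only point requiring care is the citation of the self-centralizing property of the Fitting subgroup, which should be stated as the standard result it is (e.g. as found in Robinson's \emph{A Course in the Theory of Groups}), since it is the one place where the solubility hypothesis is genuinely consumed.
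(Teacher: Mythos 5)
Your proposal is correct and follows essentially the same route as the paper: both proofs use the self-centralizing property $C_G(\mathrm{Fit}(G))=\mathrm{Fit}(G)$ (valid here since $G$ is soluble and $\mathrm{Fit}(G)$, being cyclic, is abelian), embed $G/\mathrm{Fit}(G)$ into $\mathrm{Aut}(\mathrm{Fit}(G))$, which is abelian because $\mathrm{Fit}(G)$ is cyclic, and then invoke Proposition~\ref{prop:norm:cyclic-by-abelian}. No gaps; your write-up is merely a slightly more detailed version of the paper's argument.
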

\begin{proof}
     By Proposition \ref{prop:norm:cyclic-by-abelian}, it suffices to prove that $G/F$ is abelian. Notice that since $G$ is a finite soluble group we have $C_G(F)=Z(F)=F$. Thus $N_G(F)/C_G(F)=G/F$ is isomorphic to a subgroup of ${\rm Aut} (F)$ which is abelian, since $F$ is cyclic; therefore, $G/F$ is abelian. This proves the result.    
\end{proof}

The same bound holds when the Fitting subgroup of $G$ has prime index.
 
\begin{proposition}\label{prop:norm:primeindex}
    Let $G$ be a finite soluble group with trivial center and $\vec \Delta_{\rm norm} (G)$ strongly connected. If $|G:{\rm Fit}(G)|$ is a prime number then ${\rm diam}(\vec \Delta_{\rm norm} (G)) \leq 4$.
\end{proposition}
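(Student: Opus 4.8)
The plan is to reduce every distance computation to paths routed through the Fitting subgroup $F={\rm Fit}(G)$ and its center. Write $p=|G:F|$. Since $G$ is soluble with trivial center, $F$ is a nilpotent normal subgroup with $F\neq\{1\}$ (otherwise $|G|=p$ would force $Z(G)=G\neq\{1\}$), so $Z(F)\neq\{1\}$, and $G/F$ has prime order $p$, hence $G/F\cong C_p$. The first fact I would record is that any two nontrivial elements $u,w\in F$ satisfy $\vec d(u,w)\le 2$: choosing any $z\in Z(F)\setminus\{1\}$, the element $z$ commutes with everything in $F$, so $u\to z\to w$, and one of these steps is dropped if $u$ or $w$ already lies in $Z(F)$. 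Thus the subgraph induced on $F\setminus\{1\}$ has diameter at most $2$, routed through $Z(F)$.

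The key step, and the one I expect to be the main obstacle, is to show that every $y\in G\setminus F$ centralizes a nontrivial element of $F$, i.e. $C_F(y)\neq\{1\}$. Since $yF$ generates $G/F\cong C_p$ we have $y^p\in F$. If $y^p\neq 1$, then $y^p$ is a nontrivial element of $F$ commuting with $y$, so $y^p\in C_F(y)\setminus\{1\}$ and we are done. The remaining case is $o(y)=p$, where $\langle y\rangle\cap F=\{1\}$ and $G=F\rtimes\langle y\rangle$. Here I would argue by contradiction: if $C_F(y)=\{1\}$, then since every nonidentity power of $y$ generates $\langle y\rangle$ we get $C_F(y^j)=C_F(y)=\{1\}$ for all $1\le j\le p-1$, so $\langle y\rangle$ acts fixed-point-freely on $F$ and $G$ is a Frobenius group with kernel $F$. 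This contradicts Proposition~\ref{prop:norm}, since a Frobenius group has strongly disconnected $\vec\Delta_{\rm norm}$. Hence $C_F(y)\neq\{1\}$ in every case.

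With these two facts the diameter bound follows by a short case analysis on the positions of two vertices $x,y$ relative to $F$ (recall that by Corollary~\ref{cor:norm:A} the vertex set is $G\setminus\{1\}$). If $x,y\in F$, then $\vec d(x,y)\le 2$. If $x\in F$ and $y\notin F$, choose $b\in C_F(y)\setminus\{1\}$; then $x$ reaches $b$ inside $F$ in at most two steps and $b\to y$ because $b$ commutes with $y$, giving $\vec d(x,y)\le 3$, and symmetrically when $x\notin F$ and $y\in F$. Finally, if $x,y\notin F$, pick $a\in C_F(x)\setminus\{1\}$, $b\in C_F(y)\setminus\{1\}$ and $z\in Z(F)\setminus\{1\}$; then $x\to a\to z\to b\to y$ is a directed path, since $a$ commutes with $x$, $z$ is central in $F$ and commutes with both $a$ and $b$, and $b$ commutes with $y$. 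This yields $\vec d(x,y)\le 4$, and combining the cases gives ${\rm diam}(\vec\Delta_{\rm norm}(G))\le 4$.
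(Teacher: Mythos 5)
Your proof is correct and follows essentially the same route as the paper's: both arguments funnel every pair of vertices through $Z(\mathrm{Fit}(G))$ along commuting edges, using $x^p$ as the intermediate vertex when $x^p\neq 1$, and invoking Proposition~\ref{prop:norm} (Frobenius groups have strongly disconnected $\vec\Delta_{\rm norm}$) when $o(x)=p$. The only deviation is that the paper splits the order-$p$ case into the subcases $p\mid |{\rm Fit}(G)|$ (settled by a Sylow argument inside $Z(O_p(G))$) and $p\nmid |{\rm Fit}(G)|$ (settled by the Frobenius criterion), whereas you handle both uniformly with the Frobenius criterion --- which is legitimate, since a fixed-point-free action of $\langle y\rangle$ on $F$ makes $\langle y\rangle$ malnormal, hence $G$ Frobenius, with no coprimality hypothesis needed.
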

\begin{proof}
   Let $F=\mathrm{Fit}(G)$ and $|G:F|=p$. We will prove that there exist paths connecting every element of $G\setminus\{1\}$ to any element of $Z(F)\setminus\{1\}$ and vice versa in at most $2$ steps.
   
   Let $x \in G\setminus \{1\}$. If $x \in F$ we are done. Assume $x \in G\setminus F$. If $x^p \neq 1$ then $x^p \in F\setminus\{1\}$ and so we have the path $x \leftrightarrow x^p \leftrightarrow z $, for all $z \in Z(F)$. 
   
   If $x^p=1$ then $x$ lies in Sylow $p$-subgroup of $G$, say $P$. If $p$ divides $|F|$ then we have $Z(O_p(G)) \leq P $. Thus there exists $z_1 \in Z(O_p(G)) \leq Z(F)$ such that  $x \leftrightarrow z_1 $. If $p$ does not divide $|F|$ then $G=F \rtimes \langle x \rangle $. Since $\vec \Delta_{\rm norm} (G) $ is strongly connected, due to Proposition \ref{prop:norm} it follows that $G$  is not a Frobenius group and thus there exists a nontrivial element $y \in \langle x \rangle$ that centralizes a nontrivial element of $F$, say $f$. Hence $x$ itself commutes with $f$ and so we have the path $x \leftrightarrow f \leftrightarrow z$ for all $z \in Z(F)\setminus\{1\}$. This concludes the proof.
\end{proof}

The bound in Proposition \ref{prop:norm:primeindex} is sharp. Indeed the group $G=$ SmallGroup(384,591) has  Fitting subgroup of order $128$ and index $3$, and $\vec \Delta_{\rm norm} (G)$ is strongly connected of diameter 4.

\section*{Acknowledgements}
\noindent The authors are members of the National Group for Algebraic and Geometric Structures,
and their Applications (GNSAGA – INdAM). This research has beenfunded by the European Union - Next Generation EU, Missione 4 Componente 1 CUP B53D23009410006, PRIN 2022 - 2022PSTWLB - Group Theory and Applications.

\section*{Availability of data materials} This manuscript has no associated data.

\end{document}